\newcommand{\Ceremade}{CEREMADE, Universit\'e Paris-Dauphine, CNRS, Place du Marechal De Lattre De Tassigny, 75775 Paris 16, FRANCE}
\newcommand{\INRIA}{INRIA, MOKAPLAN, 2 rue Simone Iff, Paris, FRANCE}
\newcommand{\CMAP}{CMAP, Ecole Polytechnique, CNRS, 91128 Palaiseau, FRANCE}
\newcommand{\CNRS}{CNRS}
\newcommand{\Cam}{DAMTP, University of Cambridge, Wilberforce Road, Cambridge CB3 0DZ, UK}
\renewcommand\cite{\citep}
\begin{document}

\title[Geometric properties of solutions to the total variation denoising problem]{Geometric properties of solutions \\ to the total variation denoising problem}

\author{Antonin Chambolle$^1$, Vincent Duval$^2$, Gabriel Peyr{\'e}$^{3,4,2}$, Clarice Poon$^{4,2,5}$}

\address{$^1$ \CMAP}
\address{$^2$ \INRIA}
\address{$^3$ \CNRS}
\address{$^4$ \Ceremade}
\address{$^5$ \Cam}

\ead{antonin.chambolle@cmap.polytechnique.fr,vincent.duval@inria.fr\\gabriel.peyre@ceremade.dauphine.fr,cmhsp2@cam.ac.uk}



\begin{abstract}
This article studies the denoising performance of total variation (TV) image regularization. More precisely, we study geometrical properties of the solution to the so-called Rudin-Osher-Fatemi total variation denoising method.  
%
%
The first contribution of this paper is a precise mathematical definition of the ``extended support'' (associated to the noise-free image) of TV denoising. It is intuitively the region which is unstable and will suffer from the staircasing effect. We highlight in several practical cases, such as the indicator of convex sets, that this region can be determined explicitly.
Our second and main contribution is a proof that the TV denoising method indeed restores an image which is exactly constant outside a small tube surrounding the extended support. The radius of this tube shrinks toward zero as the noise level vanishes, and are able to determine, in some cases, an upper bound on the convergence rate. 
For indicators of so-called ``calibrable'' sets (such as disks or properly eroded squares), this extended support matches the edges, so that discontinuities produced by TV denoising cluster tightly around the edges. In contrast, for indicators of more general shapes or for complicated images, this extended support can be larger.  
Beside these main results, our paper also proves several intermediate results about fine properties of TV regularization, in particular for indicators of calibrable and convex sets, which are of independent interest. 
\end{abstract}



\section{Introduction}

The total variation (TV) denoising method was introduced by Rudin, Osher and Fatemi in~\cite{rudin1992nonlinear}. It is one of the first proposed non-linear image restoration method, and had an enormous impact on shaping modern imaging sciences. Despite being quite old, this method is still routinely used today, and its popularity probably stems from both its simplicity and its ability to restore ``cartoon-looking'' images. While being far from the state of the art for denoising in terms of performance (see Section~\ref{sec-pw} for some more recent works), it is still featured as a benchmark in most papers being published on image restoration. 

\subsection{Total Variation Denoising}

The total variation a function $u \in \LDD$ is defined as 
\begin{align}
  J(u)&\eqdef \int_{\RR^2} \abs{Du} 
  \eqdef \sup \enscond{\int_{\RR^2}u \divx z }{z\in \Cder{1}_c(\RR^2,\RR^2), \normi{z}\leq 1 }.
\end{align}

Given some noisy input function $f$, following~\cite{rudin1992nonlinear}, we are interested in the total variation denoising problem
\begin{equation}
  \min_{u\in \LDN} \la J(u)+ \frac{1}{2} \normLdeux{u-f}^2.
\tag{$\Pp_\la(f)$}\label{eq-rof}
\end{equation}
Here, $\la>0$ is the regularization parameter, and it should adapted by the user to the noise level. 

The goal of this paper is to study the ability to restore the geometrical structures (in particular the edges) of some (typically unknown) noise-free function $f$ by solving $\Pp_\la(f+w)$, i.e. by applying TV regularization to the input noisy image $f+w$. Here $w$ accounts for some additive noise in the image formation process, and is assumed to have a finite $L^2$ norm $\normLdeux{w}$.

\subsection{Previous Works}
\label{sec-pw}

\paragraph{Image restoration.}

The TV denoising method, often referred to as the Rudin-Osher-Fatemi (ROF) model, was introduced in~\cite{rudin1992nonlinear}. Its basic properties (including the existence and uniqueness of the solution) are derived in~\cite{ChambolleLions}. We refer to~\cite{Chambolle10anintroduction} for an introduction to this model and an overview of its numerous applications in image processing. A thorough study of its properties can be found in~\cite{Allard1,Allard2}.
It is important to realize that TV is far beyond the state of the art in imaging sciences, and we refer to recent works such as~\cite{Portilla03,BuadesCM05,Aharon06,Dabov07} that obtain superior denoising performance on natural images by exploiting more complex and involved regularizers and statistical models. 



Beyond denoising, TV methods have been used successfully to solve a wide range of ill-posed inverse problems, see for instance~\cite{AcarVogel,ChanMarquina00,ChaventCOV,Malgouyres02}.
Following the work of Meyer~\cite{Meyer}, TV regularization in conjunction to a norm dual of TV (favoring oscillations) is used to separate texture from structure, see for instance~\cite{Aujol}.  
In a finite dimensional setting (using a discretization of the gradient operator), TV methods have been used to solve compressed sensing, where the linear operator is randomized~\cite{NeedelWard12,Poon15} to obtain accurate reconstructions when the number of random samples is nearly proportional to the number of the discretized edges. 

\paragraph{Jump sets stability.}

The use of non-smooth (possibly non-convex) regularizations to restore edges and promote sharp features has been advocated by Mila Nikolova. She provided in a series of papers a detailed analysis of a general class of regularization schemes which admit piecewise smooth solutions, see for instance~\cite{NikolovaStrong00}. 
In the case of the TV regularization, this analysis can be refined. 
Explicit solutions are known, mostly in 1-D and for radial 2-D functions (see for instance~\cite{StrongExplicit96}), as well as for indicators of convex sets in the plane~\cite{alter2005evolution,Allard3}. They suggest that TV methods indeed do maintain sharp features.
A landmark result is the proof in~\cite{casdiscont07} that total variation regularization does not introduce jumps, i.e. the ``jump set'' of the solution of~\eqref{eq-rof} is included in the one of the input $f$. A review of this result and extensions can be found in~\cite{Valkonen15}.

These results are however of little interest when $f$ is replaced by a noisy function $f+w$ (which is the setting of practical use of the method), since the noise $w$, which is only assumed to be in $L^2$, might introduce jumps everywhere. It is actually the presence of this noise which is responsible for the ``staircasing'' effect, which 
creates spurious edges in flat area. Properties of this staircasing are studied in 1-D~\cite{Ring00} and in higher dimension in~\cite{Jalalzai2015}.  It is the purpose of the present paper to fill this theoretical gap by analyzing the impact of the noise on the jump set of the solution to $\Pp_\la(f+w)$, when both $\normLdeux{w}$ and $\la$ are not too large. 

\paragraph{Calibrable and Cheeger sets.} 

Of particular importance for the analysis of TV methods are indicator functions of sets, and their behavior under the regularization.
Indicator functions which are invariant (up to a rescaling) under TV denoising define so-called ``calibrable'' sets. These sets play the role of ``stable'' sets and one expects the corresponding edges to be well restored by TV denoising, a statement which is made precise in the present paper. 
We refer to section~\ref{sec-calibrable} for a detailed description of these sets and their basic properties. 
An important result is the full characterization of convex calibrable sets in~\cite{altercalib05}.
The notion of a calibrable set is closely related to the one of eigenvectors of the curvature operator, which informally reads $\divx(\tfrac{D u}{|Du|})$, and is also known as the $1$-Laplacian, see~\cite{Kawohl07}. Indeed, indicators of calibrable sets are eigenvectors of this operator~\cite{BellettiniEigen05}. These eigenvectors can be used for image processing purposes, as advocated in~\cite{Benning13}.
The study of fine geometrical properties of TV minimizers is thus deeply linked with geometric measure theory and in particular sets of finite perimeters~\cite{ambcasmas99,maggi2012sets}. In particular, the construction of calibrable sets is related to minimal surface problems~\cite{giusti1977minimal} and capilarity problems~\cite{Kor93}.
Calibrable sets are also related to Cheeger sets, which are subsets of a given set minimizing the ratio of perimeter over area. These Cheeger sets are useful to construct the solution of the TV denoising problem. Cheeger sets associated to a given convex sets are unique~\cite{casuniq07,alteruniq09,kawohl06}, and can be approximated using either $p$-Laplacian~\cite{kawohlnovaga} or strictly convex penalizations~\cite{buttmaximal07} to recover an unique maximal set, which can in turn be computed numerically~\cite{carlierpeyre}. 

\paragraph{TV flow.}

While our paper studies variational problems, a closely related denoising method is obtained by solving the PDE obtained as a gradient flow of $J$, see~\cite{beltvflow02} for a formal definition. In this setting, the evolution time $t$ plays the role of $\la$. 
This TV flow can be shown in 1-D, for characteristic of convex sets and for radial functions to be equivalent to the TV regularization~\eqref{eq-rof}, see~\cite{Ring00,Briani2011,BroxEquiv,Jalalzai2015}.
All the results available for the variational formulation~\eqref{eq-rof} have equivalent in the PDE setting, such as for instance explicit solutions for the indicators of convex sets~\cite{alter2005evolution} and the evolution of the jump-set~\cite{CasellesJumpFlow}. Some of these results have been extended to more general PDE's, see~\cite{andreu2004parabolic}.

\if 0
\paragraph{Links with wavelet regularization.}

Total variation regularization shares similarities with wavelet-based methods. This stems in most part because the space of bounded variation functions is tightly approximated from the inside and from the outside by Besov spaces, which are characterized by the sparsity of wavelet expansions, see~\cite{CohenDevore01}. This means that $J$ is well approximated by a (weighted) $\ell^1$ norm of wavelet coefficients, so that one can expect that the solution to $\ell^1$ Wavelet regularization and of TV regularization to be close. For the denoising problem, wavelet regularization corresponds to thresholding operators applied to wavelet coefficients, and have been advocated by Donoho and his collaborators, see for instance~\cite{DonohoJohnstone1998}. They proved that wavelet thresholding leads to asymptotically optimal denoising in a minimax sense over Besov balls. These thresholding can also be interpreted as solution to sparsity-promoting variational problems~\cite{ChambolleDeVore98}. Going beyond orthogonal wavelet bases, translation invariant wavelet thresholding~\cite{CoifmanDonoho} offer an alternative way, not based on PDE's or optimization schemes, to perform edge-preserving restoration~\cite{ChambolleLucier01}.
The connexion between TV and wavelet methods is made more precise in~\cite{welk2008locally} and more recently in~\cite{cai2012image} who proves a $\Gamma$-convergence result. This connexion is however still quite loose, and in particular, does not shed light on the actually ability of both class of methods to recover sharp edges, as we intended to do in the present paper. 
\fi

\paragraph{1-D setting and statistical estimation.}

1-D TV denoising, sometimes referred to as the ``taut string method''~\cite{mammen1997}, is a method of choice to perform statistical analysis of time series and in particular to detect jumps and transitions. 
In 1-D, TV flow and TV regularization are known to be equivalent~\cite{Ring00,Briani2011}. 
In the special 1-D case, it is possible to compute exactly the solution on a grid of $P$ points in $O(P^2)$ operations using a dynamic programming method~\cite{Condat13,Dumbgen2009,Grasmair2006,HinterbergerTube}.
Similarly to wavelet thresholding estimators, 1-D TV denoising is known to achieve asymptotic optimal estimation results~\cite{mammen1997}. This optimality is however measured in term of $L^2$ error, which does not provide geometric information about the location of jumps. A more precise analysis of the distribution of the jumps is provided in~\cite{davies2001}. This analysis is however probabilistic and does not extend to higher dimensions, whereas we targets a deterministic geometric analysis in 2-D (although some of our results cover the general $N$-dimensional case).

\paragraph{Inverse problem and source condition.}

The systematic study of noise stability of regularization schemes relies on the so-called source-condition~\cite{ScherzerBook09}, which reads in the simple denoising setting that $\partial J(f)$ should be non-empty (see Section~\ref{sec-subdiff} for a primer on the total variation sub-differential $\partial J$). For non-smooth regularizations over Banach spaces, this study started with the seminal paper of Burger and Osher~\cite{burger2004convergence} who show that this source condition implies stability of the solution according to the Bregman divergence associated to $J$. This Bregman measure of stability is however quite weak, and in particular it does not lead to a precise geometric characterization of the restored jump set. Our analysis can be seen as a generalization and refinement of this approach, as highlighted in Section~\ref{sec:ourapproach}. Note that under a non-degeneracy condition, namely that $0$ is in the relative interior of $\partial J(f)$, it is possible to state much stronger results, as detailed in the book~\cite{ScherzerBook09} for $\ell^1$-based methods. These results however do not cover the TV regularization and can only be applied to discretized versions of TV regularization problems, see~\cite{VaiterPDF13}.

\paragraph{Numerical algorithms.}

While this is not the topic of this article, let us note that the discretization (often using finite differences) and the numerical resolution of~\eqref{eq-rof} is notoriously difficult, in large part because of the non-smoothness of the TV functional $J$. 
Early algorithms rely on various smooth approximations of $J$~\cite{VogelOman,ChambolleLions}. 
The dual projected gradient method proposed by~\cite{chambolle2004algorithm} started a wave of activity on the use of first order proximal splitting schemes to solve~\eqref{eq-rof} with a provably convergent scheme, see for instance~\cite{BeckTeboulle} for accelerated first order schemes. 
Another option is to solve exactly the denoising problem using graph-cuts methods~\cite{Hochbaum2001}, see also~\cite{KolmogorovZabin,DarbonSigelle,ChambolleDarbon2009} and the references therein. These algorithms work however only for the anisotropic total variation, and thus do not cover our $J$ functional, which is the isotropic total variation. 
Let us also recall that TV methods, and their discretizations, are intimately linked with iterative non-linear filterings, and in particular local median filters, see~\cite{Buades2006}. 

\if 0
\paragraph{Extension of the basic TV model.}

Many results known in the case of the isotropic total variation $J$ have been extended to the more general setting of anisotropic total variation (sometime called ``crystalline'' total variations). This includes in particular calibrable sets \cite{casanisotrop08}, gradient flows~\cite{belfacetbreak01,belcrystmcm07} and Cheeger sets~\cite{anisocheeg2009}.
Another generalization is to include some spatially varying weights, see \cite{carlier07} for the analysis of Cheeger sets and~\cite{CasellesJumpFlow} for a study of the jump set stability.  
Lastly, it is possible to devise higher order regularization schemes~\cite{ChanMarquina00,BrediesTGV} to promote piecewise smooth (e.g. piecewise polynomial) instead of piecewise constant solutions.
While our analysis does not cover these more general settings, it could serve as a starting point for the systemic study of non-smooth regularization methods for denoising. 
\fi

\subsection{Contributions}

\paragraph{Level lines in the low noise regime.}
Let us first stress the fact that our analysis focusses on regimes where the noise and the regularization parameter are small. It is not very difficult to see that, as $\la\to 0^+$ and $\normLdeux{w}\to 0^+$, the solution $\ulaw$ to $\Pp_\la(f+w)$ converges towards $f$ in the $L^2$ topology. Our goal is to describe this convergence more precisely: \textit{is it possible to say that the level lines of $\ulaw$ converge to those of $f$? In what sense? Morevoer, does the support of $D\ulaw$ converge towards the support of $Df$?}
Those questions are all the more important as it is widely acknowledged in image analysis theory that the shape information of an image is contained in the level sets of an image~\cite{wertheimer1923,serra1982}, determined in particular by their boundary. 

To assess the support stability of the method with respect to that matter, and in particular to study its ability to restore edges, it is necessary to make stronger assumption on the noise level. Whereas in~\cite{duvalpeyre13} we considered low noise regimes in which $\la \to0^+$ and ${\normLdeux{w}}/{\la}\leq C$ for some well-chosen constant $C>0$, here, to obtain strongest results, we assume the stronger condition ${\normLdeux{w}}/{\la}\to 0^+$ as $\la\to 0^+$.  


\subsubsection{Our approach -- the minimal norm certificate}\label{sec:ourapproach}
A common approach to studying the stability properties of a variational problems is  by analysis of the source condition. To explain our approach, we  first recall a result of Burger and Osher \cite{burger2004convergence} which provides a link between the source condition and stability of regularized solutions:  Given  $f\in \LDD$ with finite total variation, suppose that the source condition holds. i.e.  there exists some $v\in \partial J(f)$ such that $v = -\divx z$ for some $z\in \mathrm{L}^\infty(\RR^2, \RR^2)$ with $\norm{z}_{L^\infty}\leq 1$. Note that elements in $\partial J(f)$ are often referred to as \emph{dual certificates}. Let $T\subset \RR^2$ and $\delta \in (0,1)$ be such that $\abs{z(x)}<1- \delta$  for a.e. $x\not\in T$. Then,
 $\ulaw$, the solution to $\Pp_\la(f+w)$ satisfies
$$
(1-\delta)\int_{\RR^2\setminus T} \abs{D \ulaw}\leq  \frac{\normLdeux{w}^2}{2\lambda} +\frac{\lambda  \norm{v}_{L^2}^2}{2} + \normLdeux{w} \norm{v}_{L^2}.
$$

While this result informs us that the variation of $\ulaw$ is concentrated in the region $T$, it does not provide any information on the regions where $D \ulaw$ is identically zero and no information is given about how the support of $D \ulaw$ differs from the support of $D f$. 

Instead of studying \textit{any} $v\in \partial J(f)$, in this paper, we shall study the minimal norm certificate
$$
\voo \eqdef \mathop{\argmin}\enscond{\norm{v}_{L^2}}{ v\in\partial J(f)}.
$$

The minimal norm certificate was first proposed in \cite{duvalpeyre13} for  studying the support of solutions to the sparse spikes deconvolution problem using total variation of measures regularization, but in this particular framework of denoising, it is also known as the \textit{minimal section} of $\partial J(f)$ \cite{scherzer2008variational}.
Although dual certificates have been widely used  to derive stability properties of solutions to the sparse spikes deconvolution problem in terms of the $L^2$ norm, see for instance \cite{Grasmair-cpam}, the novelty of the minimal norm certificate (which is itself a dual certificate) is that it additionally addresses support stability questions such as the number and the location of the recovered diracs.

In this paper, we  follow the same philosophy: Similarly to the problem of sparse spikes deconvolution, we show that the minimal norm certificate naturally gives rise to the notion of an extended support, which in turn, governs the support of the regularized solution in the low noise regime.
Unlike previous works, our analysis is carried out for this very specific dual certificate and in doing so, we are able to characterize the support stability of the total variation denoising problem.

\subsubsection{Our main contribution}

\paragraph{The extended support.}

Based on the minimal norm certificate, we define the extended support $\ext(Df)$
of a function  $f\in \LDD$ with bounded variation when the source condition is satisfied. 
Intuitively, it is the region that suffers from gradient support instabilities in the low noise regime.
The statement is made precise in the main result of this paper, Theorem~\ref{thm:spt_stability},  where we prove that given any tube around the extended support, there exists $\la_0,\al_0>0$ such that the support of $D u_{\la,w}$ is contained inside this tube for all $(\la,w)\in D_{\la_0,\al_0}$. Furthermore, the radius of this tube converges to zero as the noise level converges to zero. In particular, given sequences $w_n\in L^2(\RR^2)$ and $\la_n\in \RR_+$ such that
${\normLdeux{w_n}}/{\la_n}\to 0$ as $n\to +\infty$, the conclusion of our main result is that
\begin{equation*}
  \supp(Df)\subseteq \liminf_{n\to +\infty} \supp(D\un) \subseteq \limsup_{n\to+\infty} \supp(D\un)\subseteq \ext(Df).
\end{equation*}

Explicit examples of the extended support are given for indicator functions on calibrable sets and convex sets with smooth boundaries, and in particular, for these examples, our definition of the extended support is in fact tight.  Moreover, when denoising the indicator function of a calibrable set $C$, the support of regularized solutions to the TV denoising problem will cluster around $\partial C$.

\paragraph{Stability estimates in the absence of the source condition.}
Section \ref{sec:no_source_cond} discusses stability analysis for cases where the source condition is not satisfied, i.e. $\partial J(f) = \emptyset$. One important class of functions which this covers are the indicator functions  on convex sets with nonsmooth boundary, such as the  square. To our knowledge, there were no previous studies on stability analysis in the absence of the source condition and hence, no stability guarantees for even the simple case of denoising the indicator function of a square. Although  in this case, the minimal norm certificate  is not defined, we show in Theorem \ref{thm:union_cvx_sets} that the techniques developed in the analysis of the minimal norm certificate can be adapted to such special cases to derive stability estimates for general convex sets.

\paragraph{Convergence rates.}

We stress that  via the approach of \cite{burger2004convergence}, characterization of the regions where the variation of $\ulaw$ is small is possible only when the source condition holds \emph{and} there is precise knowledge of the extremal points and decay of some vector field $z\in \mathrm{L}^\infty(\RR^2, \RR^2)$ for which $v=-\divx z \in \partial J(f)$. In general, this vector field  is not unique and such precise characterization is a difficult problem. 
In contrast, via our approach, explicit knowledge of the vector field associated with the minimal norm certificate is not essential, and in fact, the definition of the extended support is dependent only $v_0$ and not on the vector fields $z$ for which $v_0= -\divx z$.  

Nonetheless, in the special cases where the vector field associated with $v_0$ is known, we provide in Theorem \ref{thm:stab_w_vec_field} an explicit upper bound on the rate of shrinkage of the tube around the extended support with respect to the decay of the noise level. For the indicator function on a calibrable set $C$ with $\Cder{2}$ boundary,  we describe an explicit construction of the vector field $z_0$ associated with the minimal norm certificate with $\abs{z_0}<1$ on all compact subsets of $\RR^2\setminus \partial C$.
Therefore, our main result can be seen as a refinement of the work of \cite{burger2004convergence} and can be applied in much greater generality.

\subsection{Outline of the paper}
Section \ref{sec:prelim} recalls some essential tools which will be used throughout this paper. Section \ref{sec:duality} introduces the dual formulation of \eqref{eq-rof} and defines the minimal norm certificate. Explicit examples of the minimal norm certificate are also given. Based on the existence of the minimal norm certificate, Section \ref{sec:proplev} derives some geometric properties of the level sets of solutions of \eqref{eq-rof} in the low noise regime. The definition and examples of the extended support can be found in Section \ref{sec:extended}.
The key results are presented in Sections \ref{sec:stab_extended_spt}, \ref{sec:no_source_cond} and \ref{sec:stab_grad}. Section \ref{sec:stab_extended_spt} presents the main result, which describe support stability with respect to the extended support in the presence of the source condition. Section   \ref{sec:no_source_cond} decribes how our main result can be adapted for the analysis of support stability in the absence of the source condition; in particular, we provide analysis for the special case of denoising indicator functions on unions of convex sets. Section \ref{sec:stab_grad} presents a refinement of our main theorem in the case where the vector field associated with the minimal norm certificate is known. Furthermore, in Section \ref{sec:calibrable}, we describe the behaviour of the vector field associated with the minimal norm certificate of indicator functions on calibrable sets.
Finally, some numerical examples are presented in Section \ref{sec:numerics} for the illustration of our theoretical results. 


\section{Preliminaries}
\label{sec:prelim}
This section recalls some essential results which are applied throughout this paper.

\subsection{Set convergence}
\label{sec:prelim-setcv}
We shall use the notion of \textit{Painlev\'e-Kuratowski} set convergence (see~\cite{rockafellarwets} for more detail). Given a sequence of sets $\{S_n\}_{n\in\NN}$, $S_n\subseteq \RR^2$,  let us define the outer (resp. inner) limit of $\{S_n\}_{n\in\NN}$ as
\begin{align}
  \limsup_{n\to +\infty} S_n\eqdef\enscond{x\in\RR^2}{\liminf_{n\to +\infty} \dist(x,S_n)=0}, \\
  \mbox{(resp.)}\quad  \liminf_{n\to +\infty} S_n\eqdef\enscond{x\in\RR^2}{\limsup_{n\to +\infty} \dist(x,S_n)=0}.
\end{align}
It is clear that $\liminf_{n\to +\infty} S_n\subseteq \limsup_{n\to +\infty} S_n$. Moreover, those two sets are closed. We say that $S_n$ converges towards $S\subseteq \RR^2$, \textit{i.e.\ } $\lim_{n\to+\infty} S_n=S$, if
\begin{equation}
  \liminf_{n\to +\infty} S_n =S =\limsup_{n\to +\infty} S_n.
\end{equation}

If the sequence $S_n$ is bounded (there exists $R>0$ such that $S\subseteq B(0,R)$ and $S_n\subseteq B(0,R)$ for all $n$ large enough), then the \textit{Painlev\'e-Kuratowski} convergence is equivalent to the so-called Hausdorff convergence, that is,
\begin{equation}
  \lim_{n\to+\infty} \sup_{x\in S\cup S_n}\abs{\dist(x,S_n)-\dist(x,S)}=0.
\end{equation}

\subsection{Functions with bounded variation and sets with finite perimeter}
We briefly recall some properties of functions of bounded variations and sets of finite perimeter. We refer the reader to \cite{Ambrosio,maggi2012sets} for a comprehensive treatment of the subject.

\paragraph{Total variation, perimeter.}
Given $u\in L^1_{loc}(\RR^2)$, its total variation is equal to
\begin{align*}
\int_{\RR^2} \abs{Du} \eqdef \sup \enscond{\int_{\RR^2}u \divx z }{z\in \Cder{1}_c(\RR^2,\RR^2), \normi{z}\leq 1 }.
\end{align*}

If $J(u)<+\infty$, we say that $u$ has bounded variation.
The mapping $u\mapsto J(u)$ is lower semi-continuous  with respect to the $L^1_{loc}(\RR^2)$ topology (hence for the $L^2$ topology).

If $E\subseteq \RR^2$ is a measurable set, we denote by $|E|$ its 2-dimensional Lebesgue measure. The set $E$ is said to be of finite perimeter if $J(\bun{E})<+\infty$, where $\bun{E}$ is the indicator function of $E$. Its perimeter is defined as $P(E)=\int_{\RR^2} |D\bun{E}|$. For a Borel set $S\subseteq \RR^2$, $\Hh^1\llcorner S$ denotes the 1-dimensional Hausdorff measure restricted to $S$, namely $\Hh^1\llcorner S(A)=\Hh^1(A\cap S)$. 

The reduced boundary of $E$ is defined as
\begin{equation}
  \partial^* E\eqdef \enscond{x\in \supp \abs{D\bun{E}}}{\nu_E(x)\eqdef \lim_{r\to 0^+}\frac{-D\bun{E}(B(x,r)}{\abs{D\bun{E}(B(x,r))}}\mbox{ exists and }\abs{\nu_E(x)}=1}. 
\end{equation}
The vector $\nu_E(x)$ is the measure theoretic outer unit normal to $E$. When the context is clear, we shall write $\nu$ instead of $\nu_E$. Moreover, $D\bun{E}=-\nu_E\Hh^1\llcorner \partial^*E$, and $\abs{D\bun{E}}(A)=\Hh^1(\partial^*E \cap A)$ for all open set $A\subseteq \RR^2$.

In the following, we use the construction in~\cite[Prop.~3.1]{giusti1977minimal} so as to always consider a Lebesgue representative of $E$ such that for all $x$ in the topological boundary $\partial E$, $0<\abs{E\cap B(x,r)}<\abs{B(x,r)}$. Then, with this representative,
\begin{equation*}
  \supp D\bun{E}=\overline{\partial^*E}=\partial E.
\end{equation*}

The area and the perimeter are related by the so-called isoperimetric inequality: for any Lebesgue measurable set $E\subseteq \RR^2$, 
$$\cD\min\{\abs{E},\abs{\RR^N\setminus E}\}\leq (P(E))^2,$$ where  $\cD=4\pi$ is the isoperimetric constant.

\paragraph{Level sets and the coarea formula.}
The coarea formula relates the total variation of a function $f\in L^1_{loc}(\RR^2)$ and the perimeter of its level sets. Define the level sets of $f$ as
\begin{equation}\label{eq:lev_sets}
\begin{split}
  \F{t}\eqdef \enscond{x\in\RR^2}{f(x)\geq t} \mbox{ for } t\geq 0,\\
  \F{t}\eqdef \enscond{x\in\RR^2}{f(x)\leq t} \mbox{ for } t< 0.
  \end{split}
\end{equation}
It is clear that $\abs{\F{t}}<+\infty$ except possibly for $t=0$. Moreover, the family is monotone on $[0,+\infty)$ and $(-\infty,0)$ with
\begin{equation*}
  \F{t}=\bigcap_{0<t'<t}\F{t'} \mbox{ for $t>0$,} \quad  \F{t}=\bigcap_{0>t'>t}\F{t'} \mbox{ for $t<0$}.
\end{equation*}
We handle $0$ as a special case with $\F{0}=\RR^2\setminus\bigcup_{t'<0}\F{t'}$.
Now, given an open set $U\subseteq \RR^2$, the coarea formula states that if $J(f)<+\infty$ then
\begin{equation*}
  \int_{U}\abs{Df} =\int_{-\infty}^{+\infty} P(\F{t};U)\d t.
\end{equation*}
where $P(\F{t};U)\eqdef \abs{D\bun{E}}(U)$.

\subsection{Subdifferential of $J$}
\label{sec-subdiff}

In the following, unless otherwise stated, we use the $\LDD$ topology. The functional $J:\LDD\rightarrow \RR\cup \{+\infty\}$ is convex, proper lower semi-continuous. It is in fact the support function of the closed convex set
\begin{equation*}
  \enscond{\divx z }{z\in \XDD, \normi{z}\leq 1}\subseteq \LDD, 
\end{equation*}
where we defined 
\eq{
	\XD \eqdef \enscond{z\in \Linf(\RR^2, \RR^2) }{ \divx z\in\LDD  }.
}
As a result, it is possible to prove that
\begin{align}
  \label{eq:tvsubdiff}
  \partial J(0)&= \enscond{\divx z }{z\in \XDD, \normi{z}\leq 1},\\
\label{eq:tvsubdiffu}  \partial J(u)&= \enscond{v\in \partial J(0) }{\int_{\RR^N} uv = J(u)}.
\end{align}

Provided that $J(u)<+\infty$, $Du$ is a Radon measure, i.e  it is possible to evaluate $(z,Du)$ for all vector field $z\in C_c^0(\RR^2; \RR^2))$.
Following the construction by Anzellotti~\cite{Anzellotti}, it is possible to define $(z,Du)$ for less smooth $z$, namely $z\in \XD$ provided that $u\in \LDD$ and $J(u)<+\infty$. Given $\varphi\in \Cder{1}_c(\RR^2)$, define
\begin{align*}
  \langle (z,Du), \varphi \rangle= -\int_{\RR^2} u(x)\varphi(x) \divx z(x)dx - \int_{\RR^2} u(x)z(x)\cdot \nabla \varphi(x)dx.
\end{align*}
Then $(z,Du)$ is a Radon measure which is absolutely continuous with respect to $\abs{Du}$, with 
\begin{equation*}
  \abs{\langle (z,Du),\varphi\rangle} \leq \normi{\varphi}\norm{z}_{L^\infty(U)} \int_U \abs{Du},
\end{equation*}
for all $\varphi\in \Cder{1}_c(\RR^2)$ and $U\subset \RR^2$ open set such that $\supp(\varphi)\subset U$. Moreover, the following integration by parts holds
\begin{equation*}
  \int_{\RR^2}u\divx z = -\int_{\RR^2}(z,Du).
\end{equation*}

If $\theta(z,Du)$ denotes the Radon-Nikodym derivative of $(z,Du)$ with respect to $\abs{Du}$, we may also write write $\int_{\RR^2}(z,Du)=\int_{\RR^2}\theta(z,Du)\d \abs{Du}$.


\begin{rem}
If $u$ is smooth, then $(z,Du)$ can be interpreted as a (defined almost everywhere) pointwise inner product:
\begin{align*}
  \int_B (z,Du) = \int_B z(x) \cdot \nabla u(x)dx \quad \mbox{ for any Borel set } B \subseteq \RR^2.
\end{align*}

If $u$ is the characteristic function of set with finite perimeter $E\subset \RR^2$
\begin{align*}
 \int_E \divx z(x)dx = \int_E (z,-D\bun{E}).
\end{align*}
The question whether it is possible to give a pointwise meaning to $(z,-D\bun{E})$ is investigated in~\cite{bredies2012,ChaGolNov12a}.
In~\cite{bredies2012}, under some regularity assumption on $z$ (which holds if $\divx z \in \partial J(\bun{E})$), it is interpreted as $(z,-D\bun{E})=Tz\cdot \nu_E \Hh^1\llcorner \partial^*E$, where $Tz$ is the full trace of $z$ defined on $\Hh^1$-a.e. on $\partial^*E$~\cite{bredies2012}. In~\cite{ChaGolNov12a}, it is shown that (in dimension 2 or 3), if $\divx z \in \partial J(\bun{E})$, then every point of the reduced boundary $\partial^*E$ is a Lebesgue point of $z$, hence $(z,-D\bun{E})=z\cdot \nu_E \Hh^1\llcorner \partial^*E$.

In the general case, recalling that $\abs{D\bun{E}}= \Hh^1\llcorner \partial^*E$, we shall write
\begin{equation}
  \int_E \divx z(x)dx = \int_{\partial^*E} \theta(z,-D\bun{E})\d\Hh^{1},
\end{equation}
keeping in mind that in regular cases this amounts to $\int_{\partial^*E} z\cdot \nu_E\d\Hh^{1}$
\end{rem}

\begin{rem}
  That  enables us to interpret the optimality $\int_{\RR^2} uv = J(u)$ as an ``optimality $\abs{Du}$-almost everywhere'': 
\begin{align*}
  \label{eq:}
  \quad \int_{\RR^N} u\divx z = \int_{\RR^N} \abs{Du} 
  &\Leftrightarrow   -\int_{\RR^N} (z,Du) = \int_{\RR^N} \abs{Du} \\
  &\Leftrightarrow  0 = \int_{\RR^N} (1+\theta(z,Du))\diff \abs{Du},
\end{align*}
where $\theta(z,Du)$ is the Radon-Nikodym derivative of $(z,Du)$ with respect to $\abs{Du}$.
Since $\abs{\theta(z,Du)}\leq 1$, this implies that in fact the equality $(z,Du)=-\abs{Du}$ holds $\abs{Du}$-a.e.
Informally, recalling that $\normi{z}\leq 1$, this means that
\begin{equation*}
  z= -\frac{Du}{\abs{Du}}, \quad \abs{Du}-\mbox{almost everywhere.}
\end{equation*}

\textbf{In other words $z$ must be orthogonal to the level lines, and its saturation points contains the support of $Du$} (see also~\cite{bredies2012,ChaGolNov12a} for more rigorous statements).
\end{rem} 

\paragraph{Examples}
Let us examine two examples which can be found in~\cite{Meyer}.

\subparagraph{Characteristic function of a disc:}
Given $R>0$, consider the vector field 
\begin{equation}\label{eq-calib-disc}
  z(x)=\begin{cases}
    \frac{x}{R} & \text{if } \abs{x}\leq R, \\
    \frac{R}{\abs{x}^2}x & \text{ otherwise.}
  \end{cases}
\end{equation}
One may check that $z\in \Linf(\RR^2,\RR^2)$, $\divx z= \frac{2}{R}\bun{B(0,R)}\in \Linf(\RR^2,\RR^2)$, $\normi{z}\leq 1$, and $z\cdot \nu =1$ on $\enscond{x\in\RR^2}{\abs{x}=R}$. Hence  $\divx z\in \partial J(u)$ for $u=\bun{B(0,R)}$.

\subparagraph{Characteristic function of a square~:} Let $u= \bun{[0,1]^2}$ be the characteristic function of the unit square. It turns out that $\partial J(u)= \emptyset$. 
The argument provided in~\cite{Meyer} is the following. Assume that there exists $v\in \partial J(u)$ and let $z\in \Linf(\RR^2,\RR^2)$ be the corresponding vectorfield. We denote by $T_{\varepsilon}$ the triangle $\enscond{x\in \RR^2}{0\leq x_1\leq 1,\ 0\leq x_2\leq 1,\ x_1+x_2\leq \varepsilon}$ and by $\nu$ its outer unit normal (defined $\Hh^1$-a.e.). By the Gauss-Green theorem:
\begin{equation*}
  \int_{T_\varepsilon} \divx z  = \int_{\partial T_\varepsilon} \theta(z,-D\bun{T_\varepsilon})\diff \Hh^1.
\end{equation*}
Since $v=\divx z\in\Ldeux(\RR^2)$, the left term is upper-bounded by $\sqrt{\int_{T_\varepsilon} ({\divx z})^2}\sqrt{\abs{T_\varepsilon}}=\smallo{\varepsilon}$ whereas the right term is lower-bounded by $(2-\sqrt{2})\varepsilon$. This is a contradiction. Hence $\partial J(u)= \emptyset$

\subsection{Calibrable sets in $\RR^2$}
\label{sec-calibrable}

A remarkable family of elements of $\partial J(0)$ is the family of characteristic functions of sets $F$ such that $\la_F\bun{F}\in \partial J(\bun{F})$ for some $\la_F\in \RR$. This family of functions, known as the calibrable sets, will serve as a prime example in the illustration of our theoretical results.   In this section, we recall some key results about these functions.

\subsubsection{Sets that evolve at constant speed}
In \cite{beltvflow02}, the authors study on the total variation flow $\frac{\partial u}{\partial t}\in -\partial J(u)$, namely:
\begin{align}
  \frac{\partial u}{\partial t}= \divx \left(\frac{Du}{|Du|} \right).
  \label{eq-tvflow}
\end{align}
They prove existence and uniqueness of a ``strong solution'' (see \cite{beltvflow02}) for all initial data $u_0\in \LDD$, and existence and uniqueness of an ``entropy solution'' for $u_0\in L^1_{loc}(\RR^N)$.
In the second part of the paper, they characterize the bounded sets of finite perimeter $\Omega$ such that $u=\bun{\Omega}$ satisfies
\begin{align}
  -\divx \left(\frac{Du}{|Du|}\right)= \lambda_\Omega u, \quad \mbox{ where } \la_\Omega\eqdef \frac{P(\Omega)}{|\Omega|}.
\end{align}

Such sets are exactly the sets which evolve with constant boundary, i.e. such that $u(x,t)=\la(t)\bun{\Omega}(x)$, with $\la \geq 0$. Such sets are called calibrable. They are characterized by the fact that $\la_\Omega\bun{\Omega}\in \partial J(\bun{\Omega})$:

\begin{defn}[Calibrable sets]
  A set of finite perimeter $\Omega\subset \RR^2$ is said to be calibrable if, writing $v=\bun{\Omega}$, there exists a vector field $z\in L^{\infty}(\RR^2,\RR^2)$ such that $\|z\|_\infty \leq 1$
  and 
  \begin{align*}
    \int_{\RR^N} (z, Dv)=\int_{\RR^2} |Dv|,\\
    -\divx z =\la_\Omega v.
  \end{align*}
  In that case, we say that $z$ is a calibration for $\Omega$.
\end{defn}

\begin{rem} If $\la \bun{\Omega}\in \partial J(\bun{\Omega})$ for some $\la\in \RR$, then necessarily $\la=\la_\Omega$.
\end{rem}

\subsubsection{Characterization in $\RR^2$}
The following results characterize convex calibrable sets.
\begin{prop}[\protect{\cite{beltvflow02}}]
Let $C\subset \RR^2$ be a bounded set of finite perimeter, and assume that $C$ is connected. $C$ is calibrable if and only if the following three conditions hold:
\begin{enumerate}
  \item $C$ is convex;
  \item $\partial C$ is of class $C^{1,1}$;
  \item the following inequality holds:
    \begin{align}\label{eq:prelimcalibchar}
      \ess \sup_{p\in \partial C} \kappa_{\partial C} (p)\leq \frac{P(C)}{|C|}.
    \end{align}
\end{enumerate}
\end{prop}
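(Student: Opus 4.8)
The plan is to prove the two implications separately, exploiting the variational characterization of calibrability, namely that $\Omega$ is calibrable if and only if $\la_\Omega \bun{\Omega} \in \partial J(\bun{\Omega})$, which by~\eqref{eq:tvsubdiffu} amounts to the existence of a vector field $z$ with $\normi{z}\le 1$, $-\divx z = \la_\Omega \bun{C}$, and $(z, D\bun{C}) = \abs{D\bun{C}}$ (equivalently $z\cdot\nu_C = 1$ $\Hh^1$-a.e.\ on $\partial^* C$). For the ``only if'' direction, I would first argue convexity: if $\la_C\bun{C}\in\partial J(\bun{C})$, then $\bun{C}$ minimizes $J(u) - \la_C\int u\,\bun{C}$; comparing with the convex hull $\widehat C$ and using that $P(\widehat C)\le P(C)$ while $\abs{\widehat C}\ge \abs{C}$ forces $C=\widehat C$ up to a negligible set (this is the standard argument that prescribed-curvature/Cheeger-type minimizers inside a convex constraint are convex, and here there is no constraint so $C$ itself must be convex). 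Then, with $C$ convex, I would use the fact that $z$ restricted near $\partial C$ must satisfy $z\cdot\nu = 1$ with $\abs{z}\le 1$, so the trace of $z$ on $\partial C$ is exactly $\nu$; since $\divx z = -\la_C$ is bounded, a blow-up / trace argument (as in the square example in the excerpt, or via the theory of~\cite{bredies2012,ChaGolNov12a}) shows $\partial C$ cannot have corners, giving $C^{1,1}$ regularity. Finally, testing $-\divx z = \la_C$ on small caps cut off by chords near a boundary point $p$ and using Gauss--Green, the flux of $z$ through the chord is controlled by $\la_C\cdot(\text{area of cap})$, which is second order in the cap width, while the boundary term picks up the turning of $\nu$, i.e.\ the curvature; this yields $\kappa_{\partial C}(p)\le \la_C = P(C)/\abs{C}$ at $\Hh^1$-a.e.\ $p$.

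For the ``if'' direction, given a convex set $C$ with $C^{1,1}$ boundary satisfying~\eqref{eq:prelimcalibchar}, the task is to construct an explicit calibration $z$. The natural construction is: inside a tubular neighborhood of $\partial C$ (which has positive width by the $C^{1,1}$ assumption, via the interior/exterior ball condition), use the signed distance function $d$ to $\partial C$ and set $z = -\nabla d \cdot g(d)$ for a suitable scalar profile $g$ with $g(0)=1$; compute $\divx z$ in terms of $g$, $g'$ and the curvature of the level sets of $d$ (which is $\kappa/(1-d\kappa)$ type expression), and choose $g$ so that $\divx z$ is constant equal to $-\la_C$ where possible, extending $z$ by the constant field $-\la_C x/2$-type ansatz in the interior (mimicking the disc example in~\eqref{eq-calib-disc}). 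One must check $\abs{z}\le 1$ everywhere: near $\partial C$ this is where~\eqref{eq:prelimcalibchar} is used — the curvature bound is exactly what guarantees the profile $g$ stays $\le 1$ in modulus rather than blowing up. This is essentially the construction of~\cite{beltvflow02}, and for the convex case it can be streamlined because level sets of $d$ inside $C$ are again convex.

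The main obstacle I expect is the ``if'' direction: verifying that the explicitly constructed $z$ simultaneously satisfies $\abs{z}\le 1$ globally, has $\divx z\in\LDD$ (indeed bounded), and matches up continuously (in the appropriate weak/trace sense) across the interface between the ``near $\partial C$'' region and the interior region. The curvature condition~\eqref{eq:prelimcalibchar} enters precisely in bounding $\abs{z}$, and getting the inequality to close requires a careful ODE analysis of the profile $g$ together with the geometry of the inner parallel sets of a convex body — in particular handling the (possible) vanishing of the inradius-type quantities and the behavior where $\partial C$ has curvature exactly equal to $\la_C$. The convexity and $C^{1,1}$ parts of the ``only if'' direction are comparatively routine given the subdifferential formalism and the blow-up argument already illustrated in the excerpt's treatment of the square.
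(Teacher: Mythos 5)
The paper states this proposition as a cited result from~\cite{beltvflow02} and does not prove it, so there is no internal proof to compare against.

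Your ``only if'' outline (convex-hull comparison for convexity, variational mean curvature regularity for $C^{1,1}$, and a perturbation/comparison argument for the curvature bound) is broadly in line with the standard arguments, and I have no objection to it as a sketch. The genuine gap is in the ``if'' direction. The signed-distance ansatz $z = -\nabla d\cdot g(d)$, with a scalar profile $g$ depending only on the distance $d$ to $\partial C$, cannot produce constant divergence unless $\partial C$ has constant curvature. In normal coordinates $(s,t)$ near $\partial C$ (arclength $s$, inward distance $t$) the divergence of such a field is
\[
\divx z \;=\; \frac{\kappa(s)}{1 - t\,\kappa(s)}\,g(t)\;-\;g'(t),
\]
which still depends on $s$ for any fixed profile $g$, so it is not constant unless $\kappa$ is. You would be forced to take $g=g(s,t)$, i.e.\ solve a family of ODEs in $t$ indexed by $s$ with $g(s,0)=1$; and even then the construction lives only up to the cut locus (medial axis) of $C$, beyond which $d$ fails to be differentiable. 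For a generic convex set the medial axis has positive length, and gluing to an interior extension with matching divergence and normal trace across that set is exactly where the difficulty sits; the ``constant field'' interior extension you suggest will not satisfy those constraints in general. The construction actually used in~\cite{beltvflow02} (and the one invoked in Section~\ref{sec:calibrable} of this paper, via Giusti's theorem on the prescribed mean curvature equation) sidesteps all of this: one solves $\divx\bigl(\nabla u/\sqrt{1+|\nabla u|^2}\bigr) = P(C)/|C|$ in $C$ with vertical contact at $\partial C$ and sets $z = \nabla u/\sqrt{1+|\nabla u|^2}$, so that both the constancy of $\divx z$ and $|z|\le 1$ are automatic, and the curvature condition~\eqref{eq:prelimcalibchar} appears as the existence criterion for that PDE rather than as a bound on a distance profile. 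Your sketch should be replaced by this PDE-based construction for the ``if'' direction.
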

\begin{prop}[\protect{\cite{beltvflow02}}]\label{prop:prelimcalibminimal}
Let $\Omega \subset \RR^2$ be a bounded set of finite perimeter which is calibrable. Then,
\begin{enumerate}
  \item The following relation holds:
    \begin{align*}
      \frac{P(\Omega)}{|\Omega|} \leq \frac{P(D)}{|D\cap \Omega|}, \quad \forall D\subseteq \RR^2, D \mbox{ of finite perimeter;}
    \end{align*}
  \item each connected component of $\Omega$ is convex.
\end{enumerate}
\end{prop}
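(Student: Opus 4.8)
The plan is to obtain the first assertion in essentially one line from the existence of a calibration together with the Anzellotti integration-by-parts formula, and then to bootstrap the second assertion from the first by comparing $\Omega$ with the convex hull of one of its connected components. For the first assertion, let $z$ be a calibration for $\Omega$, so that $\normi{z}\le 1$ and $-\divx z=\la_\Omega\bun{\Omega}$; in particular $z\in\Linf(\RR^2,\RR^2)$ with $\divx z\in\LDD$, so that $(z,D\bun{D})$ is well defined for any $D\subseteq\RR^2$ of finite perimeter with $\abs{D}<+\infty$ (note $\bun{D}\in\LDD$ and $J(\bun{D})=P(D)<+\infty$). Since $-\divx z$ is supported in $\Omega$, I would write
\begin{equation*}
  \la_\Omega\abs{D\cap\Omega}=\int_{\RR^2}\bun{D}\,\la_\Omega\bun{\Omega}=-\int_{\RR^2}\bun{D}\,\divx z=\int_{\RR^2}(z,D\bun{D}),
\end{equation*}
and then, using $\abs{\theta(z,D\bun{D})}\le\normi{z}\le 1$ $\abs{D\bun{D}}$-almost everywhere,
\begin{equation*}
  \la_\Omega\abs{D\cap\Omega}=\Bigl|\int_{\RR^2}(z,D\bun{D})\Bigr|\le\int_{\RR^2}\abs{D\bun{D}}=P(D),
\end{equation*}
which is the claim after dividing by $\abs{D\cap\Omega}$ (the case $\abs{D\cap\Omega}=0$ being trivial). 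Note that the saturation condition $\int(z,Dv)=\int\abs{Dv}$ in the definition of calibrability is not needed here.

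For the second assertion, I would first decompose $\Omega=\bigcup_i\Omega_i$ into its connected components, for which $P(\Omega)=\sum_iP(\Omega_i)$ and $\abs{\Omega}=\sum_i\abs{\Omega_i}$ (see \cite{ambcasmas99}). Taking $D=\Omega_i$ in the first assertion gives $\la_\Omega\abs{\Omega_i}\le P(\Omega_i)$ for every $i$; summing over $i$ and using $\la_\Omega\abs{\Omega}=P(\Omega)$ forces equality in each of these, i.e.\ $P(\Omega_i)=\la_\Omega\abs{\Omega_i}$, equivalently $\la_{\Omega_i}=\la_\Omega$. Then, fixing one component $\Omega_0$ and setting $C\eqdef\mathrm{conv}(\Omega_0)$ — a bounded convex set, hence of finite perimeter, with $\Omega_0\subseteq C$ — I would apply the first assertion with $D=C$ and combine it with the inclusion $\Omega_0\subseteq C\cap\Omega$, the perimeter estimate $P(C)\le P(\Omega_0)$, and the identity $P(\Omega_0)=\la_\Omega\abs{\Omega_0}$ to get
\begin{equation*}
  \la_\Omega\abs{\Omega_0}\le\la_\Omega\abs{C\cap\Omega}\le P(C)\le P(\Omega_0)=\la_\Omega\abs{\Omega_0}.
\end{equation*}
All inequalities are then equalities, so in particular $P(\mathrm{conv}(\Omega_0))=P(\Omega_0)$, and the rigidity of the convex-hull perimeter inequality forces $\Omega_0$ to coincide, up to a Lebesgue-null set, with the convex set $\mathrm{conv}(\Omega_0)$; thus $\Omega_0$ is convex.

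The step I expect to be the main obstacle is the geometric input $P(\mathrm{conv}(A))\le P(A)$ for a \emph{connected} bounded set $A$ of finite perimeter, together with its rigidity (equality forces $A=\mathrm{conv}(A)$ up to a null set). Connectedness is essential here: it is precisely what makes every boundary chord of $\mathrm{conv}(A)$ no longer than the sub-arc of $\partial A$ it subtends — for a disconnected set the convex hull may have much larger perimeter — and this is the reason for passing to the connected components of $\Omega$ first. Everything else (the integration by parts, the pointwise bound $\abs{\theta(z,D\bun{D})}\le\normi{z}$, and the additivity of $P$ and $\abs{\cdot}$ over connected components) is standard.
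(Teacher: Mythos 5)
The paper does not prove this proposition: it is stated as a cited result from \cite{beltvflow02}, so there is no in-paper proof to compare against. Evaluated on its own, your argument is sound, and it follows what I understand to be the same route as the original: a calibration vector field and the Anzellotti pairing give assertion~(i), and then a comparison with the convex hull of each (indecomposable) component gives assertion~(ii).

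A few remarks. In part~(i) you correctly restrict to $\abs{D}<+\infty$; this restriction is genuinely needed (take $D=\RR^2$, which has finite perimeter $P(\RR^2)=0$ but $\abs{D\cap\Omega}=\abs{\Omega}>0$, so the displayed inequality would fail), and the paper's blanket ``$\forall D$ of finite perimeter'' should be read with this tacit hypothesis. The absolute value in $\la_\Omega\abs{D\cap\Omega}=\bigl|\int(z,D\bun{D})\bigr|$ is harmless but unnecessary since the left-hand side already equals $\int(z,D\bun{D})\geq 0$. In part~(ii), you are implicitly working with the M-connected (indecomposable) components of Section~\ref{sec:jordan_decomp}, which is exactly the decomposition for which $P(\Omega)=\sum_i P(\Omega_i)$ holds, and the bootstrap forcing $\la_{\Omega_i}=\la_\Omega$ and then $P(\mathrm{conv}(\Omega_0))=P(\Omega_0)$ is correct. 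The one step you leave unproved --- $P(\mathrm{conv}(A))\leq P(A)$ for bounded, indecomposable $A$ of finite perimeter in the plane, with equality forcing essential convexity --- is indeed the substantive geometric input; it is a classical fact, and it can be established rigorously via the Jordan-curve decomposition already recalled in Section~\ref{sec:jordan_decomp}: writing $\partial^M A=J_0\cup\bigcup_j J_j$ with an outer curve $J_0$ and holes $J_j$, one has $\mathrm{conv}(A)=\mathrm{conv}(\interop J_0)$, $P(\mathrm{conv}(\interop J_0))\leq\HDU{J_0}\leq P(A)$, and equality forces both the absence of holes and that $J_0$ already bounds a convex region. You were right to flag this as the main lemma and right that connectedness (indecomposability) is what makes it true; apart from spelling that lemma out, the proof is complete.
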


\subsection{From the subdifferential to the level sets}
Let $f\in\LDD$, $J(f)<+\infty$, and $v\in \partial J(f)$. By definition of the subdifferential,
\begin{equation}\label{eq:prelim-subdiff}
  \forall g\in \LDD,\quad \int_{\RR^2}\abs{Dg} - \int_{\RR^2} vg \geq \int_{\RR^2}\abs{Df}-\int_{\RR^2} vf.
\end{equation}

In fact, using the coarea formula, one may reformulate that optimality property (see Proposition~\ref{prop:prelim-subdiff} below) as an optimality property of the level sets. That result is very similar to~\cite[Corollary~2.4]{kindermann2006denoising} but it requires a bit more care in our framework since the domain is $\RR^2$ and $v\in \LDD$.

The level sets of $f$ (resp. $g$), are denoted by $\{\F{t}\}_{t\in \RR}$ (resp. $\{\G{t}\}_{t\in \RR}$).

\begin{prop}\label{prop:prelim-subdiff}
  Let $f\in \LDD$, $J(f)<+\infty$, and $v\in \LDD$. The following conditions are equivalent.
\begin{itemize}
  \item[(i)] $v\in \partial J(f)$,
  \item[(ii)]  $v\in \partial J(0)$ and the level sets of $f$ satisfy
    \begin{align}\label{eq:prelim-setgeomcalib}
      \forall t>0,\quad P(\F{t})&=\int_{\F{t}} v, \quad \forall t<0,\quad P(\F{t})&=-\int_{\F{t}} v.
    \end{align}

  \item[(iii)] The level sets of $f$ satisfy
    \begin{align}
      \label{eq:setgeompbplus}\forall t>0,\quad \forall G\subset \RR^2, \abs{G}<+\infty,\quad P(G) - \int_{G} v &\geq P(\F{t})-\int_{\F{t}} v, \\
      \label{eq:setgeompbminus}\forall t<0,\quad \forall G\subset \RR^2, \abs{G}<+\infty,\quad P(G) + \int_{G} v &\geq P(\F{t})+\int_{\F{t}} v. 
    \end{align}
  \end{itemize}
   \end{prop}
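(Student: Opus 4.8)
The plan is to establish the cycle of implications $(i)\Rightarrow(ii)\Rightarrow(iii)\Rightarrow(i)$, with the coarea formula as the central tool for passing between the variation of a function and the perimeters of its level sets.

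For $(i)\Rightarrow(ii)$, I would first recall from \eqref{eq:tvsubdiffu} that $v\in\partial J(f)$ means $v\in\partial J(0)$ and $\int_{\RR^2}fv=J(f)$. Writing $f=\int_0^{+\infty}\bun{\F{t}}\,\d t-\int_{-\infty}^0\bun{\RR^2\setminus\F{t}}\,\d t$ (the layer-cake decomposition, taking care of the sign convention for $t<0$) and using Fubini, $\int_{\RR^2}fv=\int_0^{+\infty}\bigl(\int_{\F{t}}v\bigr)\d t-\int_{-\infty}^0\bigl(\int_{\RR^2\setminus\F{t}}v\bigr)\d t$; since $v=\divx z\in L^2$ has, morally, zero total integral (one must justify $\int_{\RR^2}v=0$ via the definition of $\XD$ and an approximation argument, or rephrase so as to avoid it), this rearranges to $\int_0^{+\infty}\int_{\F{t}}v\,\d t+\int_{-\infty}^0(-\int_{\F{t}}v)\d t$. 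On the other hand, the coarea formula gives $J(f)=\int_{-\infty}^{+\infty}P(\F{t})\,\d t$. Now, because $v\in\partial J(0)=\{\divx z:\|z\|_\infty\le1\}$, we have for \emph{every} set $E$ of finite perimeter the inequality $\int_E v=\int_E\divx z\le P(E)$ (and $-\int_E v\le P(E)$), by the Anzellotti pairing and $|\theta(z,-D\bun E)|\le1$. Hence the integrand $t\mapsto P(\F{t})-\int_{\F{t}}v$ is nonnegative for $t>0$, likewise $P(\F{t})+\int_{\F{t}}v\ge0$ for $t<0$, and the two integrals being equal forces the integrands to vanish for a.e.\ $t$; a monotonicity/semicontinuity argument in $t$ (the level sets are nested and $P$, $\int_{\F{t}}v$ vary appropriately) upgrades ``a.e.\ $t$'' to ``all $t\ne0$'', which is \eqref{eq:prelim-setgeomcalib}.

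For $(ii)\Rightarrow(iii)$: fix $t>0$ and an arbitrary $G$ with $|G|<+\infty$. Since $v\in\partial J(0)$, $\int_G v\le P(G)$, so $P(G)-\int_G v\ge0=P(\F{t})-\int_{\F{t}}v$ by \eqref{eq:prelim-setgeomcalib}; this is \eqref{eq:setgeompbplus}, and \eqref{eq:setgeompbminus} is symmetric. For $(iii)\Rightarrow(i)$: I would reverse the coarea computation. Take any $g\in\LDD$; integrating \eqref{eq:setgeompbplus} over $t\in(0,+\infty)$ with $G=\G{t}$ and \eqref{eq:setgeompbminus} over $t\in(-\infty,0)$ with $G=\G{t}$, and applying coarea to $g$ on the left-hand side and to $f$ on the right-hand side, together with the Fubini/layer-cake identities $\int_{\RR^2}vg=\int_0^\infty\int_{\G{t}}v\,\d t-\int_{-\infty}^0\int_{\RR^2\setminus\G{t}}v\,\d t$, yields $\int_{\RR^2}|Dg|-\int_{\RR^2}vg\ge\int_{\RR^2}|Df|-\int_{\RR^2}vf$, which is \eqref{eq:prelim-subdiff}, i.e.\ $v\in\partial J(f)$. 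One also needs $v\in\partial J(0)$ for this implication; this can be extracted by testing $(iii)$ against dilations $G=\G{t}$ of arbitrary finite-perimeter sets, or by noting that \eqref{eq:setgeompbplus} with $\F{t}$ replaced by $\emptyset$ gives $P(G)\ge\int_G v$ for all finite-perimeter $G$, which is known to characterize $v\in\partial J(0)$.

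The main obstacle I anticipate is analytic bookkeeping on the noncompact domain $\RR^2$: justifying the Fubini exchanges when $v$ is merely $L^2$ (so $\int_{\F{t}}v$ need not be absolutely convergent uniformly in $t$), handling the delicate role of the level $t=0$ and the set $\F{0}=\RR^2\setminus\bigcup_{t'<0}\F{t'}$ which may have infinite measure, and making precise the claim ``$\int_{\RR^2}v=0$'' or, better, arranging the layer-cake identities so that only differences $\int_{\F{t}}v-\int_{\F{t'}}v=\int_{\F{t}\setminus\F{t'}}v$ over sets of finite measure appear. This is exactly the ``bit more care'' the authors flag relative to \cite[Corollary~2.4]{kindermann2006denoising}; the geometric core (coarea $+$ the inequality $\int_E\divx z\le P(E)$ $+$ nonnegativity of integrands) is otherwise routine.
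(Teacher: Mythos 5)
Your cycle $(i)\Rightarrow(ii)\Rightarrow(iii)\Rightarrow(i)$ is the same cycle the paper uses (the paper orders it $(iii)\Rightarrow(i)$, $(i)\Rightarrow(ii)$, $(ii)\Rightarrow(iii)$), and the key ingredients — coarea $+$ Fubini, nonnegativity of $P(\F{t})\mp\int_{\F{t}}v$ from $v\in\partial J(0)$, and the upgrade from ``a.e.\ $t$'' to ``all $t\neq 0$'' by nestedness of the level sets and lower semicontinuity of the perimeter — all match.

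However, there is a concrete error in your $(i)\Rightarrow(ii)$ step that would derail the argument as written. With the paper's convention $\F{t}=\{f\le t\}$ for $t<0$, the layer-cake identity is
\begin{equation*}
  f(x)=\int_0^{+\infty}\bun{\F{t}}(x)\,\d t-\int_{-\infty}^{0}\bun{\F{t}}(x)\,\d t,
\end{equation*}
not $-\int_{-\infty}^{0}\bun{\RR^2\setminus\F{t}}\,\d t$ as you wrote. For $f(x)\geq 0$ your version yields $\int_{-\infty}^0 1\,\d t=+\infty$; and more to the point, $\RR^2\setminus\F{t}=\{f>t\}$ has \emph{infinite} measure for $t<0$, so $\int_{\RR^2\setminus\F{t}}v$ is not defined for $v\in L^2$. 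The patch you propose, ``$\int_{\RR^2}v=0$,'' does not rescue this: $v=\divx z$ with $z\in\XD$ need not lie in $L^1(\RR^2)$, so $\int_{\RR^2}v$ has no meaning. The fix is simply to use the correct signed layer-cake above: then only the sets $\F{t}$ with $t\ne0$ appear, all of finite measure, and Fubini is justified by $\int_{\RR^2}\int_0^{+\infty}|v(x)|\bun{f(x)\geq t}\,\d t\,\d x=\int_{\RR^2}|v|\,f_+\leq\|v\|_{L^2}\|f\|_{L^2}<\infty$ (and similarly for the negative part). This is precisely the identity the paper records and applies; you identified the right ``bit more care'' but reached for the wrong remedy.

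One minor simplification for $(iii)\Rightarrow(i)$: you do not need to separately extract $v\in\partial J(0)$ from $(iii)$. Integrating $(iii)$ in $t$ with $G=\G{t}$, applying coarea to $g$ and to $f$ and the layer-cake identity for $\int gv$ and $\int fv$, directly yields $J(g)-\int gv\geq J(f)-\int fv$ for every $g\in\LDD$ (trivially if $J(g)=+\infty$, since the $\int_{\G{t}}|v|$ term is controlled by $\|v\|_{L^2}\|g\|_{L^2}$), and this \emph{is} $v\in\partial J(f)$; the inclusion $v\in\partial J(0)$ then follows from \eqref{eq:tvsubdiffu}.
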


\begin{proof}
  $(iii) \Rightarrow (i)$ It suffices to use the coarea formula $\int \abs{Dg} = \int_{-\infty}^{\infty} P(\G{t})\d t$ and Fubini's theorem in \begin{equation}
    \int_{\RR^2} gv = \int_{\RR^2}\left( \int_0^{+\infty}\bun{g(x)\geq t}v(x)\d t -  \int_{-\infty}^0\bun{g(x)\leq t}v(x)\d t \right)\d x,
  \end{equation}
and similarly for the level sets of $f$.

$(i) \Rightarrow (ii)$ Using~\eqref{eq:tvsubdiffu}, we see that $v\in \partial J(0)$ and $\int_{\RR^2}fv=J(f)$. 
From $v\in \partial J(0)$, and choosing $\pm\bun{F}$ (for any $F\subset \RR^2$ with $\abs{F}<+\infty$) in the subdifferential inequality, we infer that  $P(F)\pm\int_{\RR^2}\bun{F}v \geq 0$. Now, $\int_{\RR^2}fv=J(f)$ rewrites
\begin{align*}
  0 =\int_0^{+\infty}\left( P(\F{t})-\int_{\F{t}}v\right)\d t + \int_{-\infty}^0\left(P(\F{t})+\int_{\F{t}}v \right)\d t.
\end{align*}
Since the integrands are nonnegative, we obtain that for a.e. $t\in \RR$, $P(\F{t})=\sign(t)\int_{\F{t}}v$. In fact, the equality holds for all $t\neq 0$. Indeed, for $t>0$, we may find a sequence $t_n\nearrow t$ as $n\to +\infty$ such that $ P(\F{t_n})=\int_{\F{t_n}}v$. Since $\abs{\F{t}}<+\infty$ and by monotonicity, $\bun{\F{t_n}}$ converges in $\LDD$ towards $\bun{\F{t}}$ and we have
\begin{equation*}
  P(\F{t})=P\left(\bigcap_{n\in\NN} \F{t_n}\right)\leq \liminf_{n\to +\infty} P\left(\F{t_n}\right)= \liminf_{n\to +\infty} \int_{\F{t_n}}v = \int_{\F{t}} v.
\end{equation*}
The converse inequality holds from the fact that $v\in \partial J(0)$ so that $\int_{\F{t}} v\leq   P(\F{t})$.
In a similar way, we may prove that for all $t<0$, $P(\F{t})= \int_{\F{t}} v$.

$(ii)\Rightarrow (iii)$ From $v\in \partial J(0)$, we infer that $P(G) \pm \int_{G} v\geq 0$ for any $G\subset \RR^2$ with $\abs{G}<+\infty$. Since $P(\F{t})-\sign(t)\int_{\F{t}}v=0$, we obtain the claimed result.
\end{proof}

As a consequence of Proposition~\ref{prop:prelim-subdiff}, if we are given $v\in \partial J(f)$ rather than $f$, we may control the localization of the support of $Df$ simply by studying the solutions of~\eqref{eq:prelim-setgeomcalib}. The following proposition formalizes this idea.

\begin{prop}\label{prop:prelim-suppdf}
  Let $f\in \LDD$ with $J(f)<+\infty$,  $v\in \partial J(f)$ and let $\supp (Df)$ denote the support of the Radon measure $Df$. Then
\begin{align}
  \label{eq:prelim-suppdfequal}  \supp(Df)&=\overline{\bigcup\enscond{\partial^*\F{t}}{t\in \RR\setminus\{0\}}}\\
 \label{eq:prelim-suppdfinclude}&\subseteq \overline{\bigcup\enscond{\partial^*F}{\abs{F}<+\infty \qandq P(F)=\pm \int_F v }}. 
\end{align}
\end{prop}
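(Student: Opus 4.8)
The plan is to prove the two relations separately. For the equality \eqref{eq:prelim-suppdfequal}, I would argue by double inclusion using the coarea formula together with the facts recalled in the preliminaries that $\supp D\bun{E}=\partial E=\overline{\partial^*E}$ for the good representative of a set of finite perimeter, and that $\abs{D\bun{E}}(A)=\Hh^1(\partial^*E\cap A)$ for open $A$. For the inclusion $\supseteq$: if $x\notin\supp(Df)$, there is an open ball $B=B(x,r)$ with $\int_B\abs{Df}=0$; by the coarea formula $\int_{-\infty}^{+\infty}P(\F{t};B)\d t=0$, so $P(\F{t};B)=\Hh^1(\partial^*\F{t}\cap B)=0$ for a.e.\ $t$, hence $\partial^*\F{t}\cap B=\emptyset$ for a.e.\ $t\neq 0$; using the monotonicity of the level sets (so that a general $t$ is approximated by a sequence $t_n$ for which this holds, together with $\partial^*\F{t}\subseteq\overline{\bigcup_n\partial^*\F{t_n}}$ up to $\abs{D\bun{\F{t}}}$-null sets via lower semicontinuity of perimeter as in the proof of Proposition~\ref{prop:prelim-subdiff}) one gets $\partial^*\F{t}\cap B=\emptyset$ for \emph{all} $t\neq0$, so $x$ is not in the closure of $\bigcup_t\partial^*\F{t}$. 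For the reverse inclusion $\subseteq$: if $x\notin\overline{\bigcup_t\partial^*\F{t}}$, pick $B=B(x,r)$ disjoint from every $\partial^*\F{t}$; then $P(\F{t};B)=0$ for all $t\neq0$, and the coarea formula gives $\int_B\abs{Df}=0$, so $x\notin\supp(Df)$.

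The inclusion \eqref{eq:prelim-suppdfinclude} is then almost immediate: by Proposition~\ref{prop:prelim-subdiff}, since $v\in\partial J(f)$, every level set $\F{t}$ with $t\neq0$ satisfies $P(\F{t})=\sign(t)\int_{\F{t}}v$, i.e.\ each $\F{t}$ (for $t\neq0$) is one of the sets $F$ with $\abs{F}<+\infty$ and $P(F)=\pm\int_F v$ appearing in the right-hand side of \eqref{eq:prelim-suppdfinclude}. Hence $\bigcup_{t\neq0}\partial^*\F{t}$ is contained in the union over all such $F$, and taking closures preserves the inclusion. Note we should also check that $\abs{\F{t}}<+\infty$ for $t\neq0$, which was already observed after \eqref{eq:lev_sets}.

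The main obstacle is the careful passage from ``for a.e.\ $t$'' to ``for all $t\neq0$'' in the $\supseteq$ direction of \eqref{eq:prelim-suppdfequal}: one must be sure that knowing $\partial^*\F{t_n}\cap B=\emptyset$ along a sequence $t_n\nearrow t$ (resp.\ $t_n\searrow t$ for $t<0$) forces $\partial^*\F{t}\cap B=\emptyset$. This follows from the same argument as in Proposition~\ref{prop:prelim-subdiff}: $\bun{\F{t_n}}\to\bun{\F{t}}$ in $\LDD$ by monotonicity and finiteness of measure, and lower semicontinuity of the total variation localized to the open set $B$ gives $\abs{D\bun{\F{t}}}(B)\leq\liminf_n\abs{D\bun{\F{t_n}}}(B)=0$, whence $\Hh^1(\partial^*\F{t}\cap B)=0$; combined with the convention that $\supp D\bun{E}=\partial E$ for the chosen representative, $B$ being disjoint from $\supp D\bun{\F{t}}$ yields $\partial^*\F{t}\cap B=\emptyset$. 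Everything else is bookkeeping with the coarea formula and the definition of Painlevé--Kuratowski-irrelevant closures.
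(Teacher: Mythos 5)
Your proof is correct, and it hits the same ingredients (coarea formula, good representative with $\supp D\bun E = \overline{\partial^*E}=\partial E$, and monotonicity plus lower semicontinuity of perimeter), but it routes around one of the two halves differently from the paper. For the inclusion $\overline{\bigcup_{t\neq 0}\partial^*\F{t}}\subseteq\supp(Df)$, the paper argues much more directly: once $\abs{Df}(B(x,r))=0$, $f$ is a.e.\ constant on $B(x,r)$, so for \emph{every} $t$ the set $B(x,r)\cap\F{t}$ is either null or cofull in $B(x,r)$, giving $\partial^*\F{t}\cap B(x,r)=\emptyset$ in one stroke — no coarea, no ``a.e.\ $t$ to all $t$'' passage. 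You instead go through coarea to get $P(\F{t};B)=0$ for a.e.\ $t$ and then upgrade to all $t\neq 0$ via monotone approximation and lower semicontinuity; that upgrade is essentially the argument the paper spends on a different task, namely showing $\overline{\partial^*\F{0}}\subseteq\overline{\bigcup_{t\neq 0}\partial^*\F{t}}$, which it needs because it first establishes the equality with the union over \emph{all} $t\in\RR$ and then discards $t=0$. Your version sidesteps that $t=0$ detour entirely by working with $\bigcup_{t\neq 0}$ throughout (the coarea integral does not see the single value $t=0$), at the cost of a slightly more delicate $\supseteq$ direction. Both are valid; the paper's $\supseteq$ step is the more elementary observation, while your organization is a bit more uniform in its use of the coarea formula. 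The reduction of \eqref{eq:prelim-suppdfinclude} to Proposition~\ref{prop:prelim-subdiff} is the same in both.
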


\begin{proof}
  Let $x\in \RR^2\setminus\supp (Df)$. There exists $r>0$ such that $\abs{Df}(B(x,r))=0$, hence $f$ is constant in $B(x,r)$, identically equal to some $C\in \RR$. Depending the value of $t\in \RR$, we see that either $B(x,r)\subseteq \F{t}$ or $B(x,r)\cap \F{t}=\emptyset$. In any case, $\partial^*\F{t}\subseteq \RR^2\setminus B(x,r)$. As a result $x\in \RR^2\setminus \overline{\bigcup\enscond{\partial^*\F{t}}{t\in \RR}}$, which proves that $\overline{\bigcup\enscond{\partial^*\F{t}}{t\in \RR}}\subseteq  \supp(Df)$. 

  For the converse inclusion, let $x\in \supp(Df)$, so that for all $r>0$, $$\abs{Df}(B(x,r))>0.$$ We apply the coarea formula 
\begin{equation*}
  \abs{Df}(B(x,r))= \int_{-\infty}^{\infty} P(\F{t},B(x,r))\d t,
\end{equation*}
to see that $\HDU{\partial^*\F{t}\cap B(x,r)}>0$ for some $t\in \RR$, hence $$B(x,r)\cap \bigcup\enscond{\partial^*\F{t}}{t\in \RR}\neq \emptyset.$$ Since this is true for all $r>0$, we see that $x\in \overline{\bigcup\enscond{\partial^*\F{t}}{t\in \RR}}$.

Now, we prove the last inclusion in~\eqref{eq:prelim-suppdfinclude}. First, we observe that 
\begin{equation*}
  \overline{\bigcup\enscond{\partial^*\F{t}}{t\in \RR}} = \overline{\bigcup\enscond{\partial^*\F{t}}{t\neq 0}}\cup \overline{\partial^*\F{0}}.
\end{equation*}
By Proposition~\ref{prop:prelim-subdiff}, we know that for every $t\neq 0$, $\F{t}$ satisfies $\abs{\F{t}}<+\infty$ and $\pm\int_{\F{t}}v=P(\F{t})$. Hence
\begin{align*}
  \overline{\bigcup\enscond{\partial^*\F{t}}{t\neq 0}}&\subseteq \overline{\bigcup\enscond{\partial^*F}{\abs{F}<+\infty \qandq P(F)=\pm \int_F v }},\\
  \intertext{and it is sufficient to prove that}
  \overline{\partial^*\F{0}}&\subseteq \overline{\bigcup\enscond{\partial^*\F{t}}{t\neq 0}}.
\end{align*}

Let $x\in \partial^*\F{0} = \partial^*\left(\RR^N\setminus \bigcup_{k\in\NN^*}\F{-1/k} \right) =  \partial^*\left(\bigcup_{k\in\NN^*}\F{-1/k} \right)$.
Then for all $r>0$, 
\begin{equation*}
  \abs{B(x,r)\cap \bigcup_{k\in\NN^*}\F{-1/k}}>0 \qandq  \abs{B(x,r)\setminus \bigcup_{k\in\NN^*}\F{-1/k}}>0
\end{equation*}
In particular, there exists $k_0$ such that $\abs{B(x,r)\cap \F{-1/k_0}}>0$, and moreover $\abs{B(x,r)\setminus \F{-1/k_0}}\geq  \abs{B(x,r)\setminus \bigcup_{k\in\NN^*}\F{-1/k}}>0$.
Hence $\bun{\F{-1/k_0}}$ is not constant in $B(x,r)$, so that $\partial^* \F{-1/k_0}\cap B(x,r)\neq \emptyset$.
As a result, $B(x,r)\cap \bigcup\enscond{\partial^*\F{t}}{t\neq 0}\neq \emptyset$ for all $r>0$, which proves that $x\in \overline{\bigcup\enscond{\partial^*\F{t}}{t\neq 0}}$.

\end{proof}

\subsection{The prescribed mean curvature problem}
As a consequence of Propositions~\ref{prop:prelim-subdiff} and~\ref{prop:prelim-suppdf}, we are led to study the solutions of the \textit{prescribed curvature problem}
\begin{equation}\label{eq:prelim-varcurv}
  \min_{\substack{X\subset \RR^N\\\abs{X}<+\infty}} P(X) + \int_X H
\end{equation}
for $H=\pm v$, where $v\in \partial J(f)$ is fixed.
Following~\cite{Barozzi}, if $E\subset \RR^2$ is a solution to~\eqref{eq:prelim-varcurv}, we say that $v$ is a \textit{variational mean curvature}~\footnote{The careful reader will note that we make a slight abuse in the terminology since in~\cite{Barozzi, massari1994variational} the function $H$ is assumed to be integrable, and the condition $\abs{F}<+\infty$ is not imposed. We make this slight abuse since the local properties of the sets studied in~\cite{Barozzi, massari1994variational} also hold for the solutions of~\eqref{eq:prelim-varcurv}.} for $E$. Depending on the integrability of $H$, the solutions of such a problem have the following regularity properties. 
\begin{prop}[\cite{Ambrocorso}]\label{prop:regu_ambro}
Assume that $H\in L^p_{loc}(\RR^N)$ for some $p\in (N,+\infty]$, and let $E\subseteq \RR^N$ be a nonempty solution of~\eqref{eq:prelim-varcurv}. Then $\Sigma=\partial E\setminus \partial^*E$ is a closed set of Hausdorff dimension at most $N-8$, and $\partial^* E$ is a $\Cder{1,\alpha}$ hypersurface for all $\alpha<(p-N)/2p$.

 If $p=\infty$, then $\partial^* E$ is $\Cder{1,\alpha}$ for all $\alpha>0$, and if additionally $N=2$, then $\partial^* E$ is $\Cder{1,1}$.
\end{prop}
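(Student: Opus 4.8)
The statement to be proven is Proposition~\ref{prop:regu_ambro}, a regularity result for solutions of the prescribed mean curvature problem. The plan is to reduce it to the classical regularity theory for $\Lambda$-minimizers (almost-minimizers) of the perimeter, which is already available in the references cited (Ambrosio, Massari, and the minimal surfaces literature). The key observation is that a solution $E$ of $\min_{\abs{X}<+\infty} P(X) + \int_X H$ with $H\in L^p_{loc}$ is a \emph{quasi-minimizer} of the perimeter: for any competitor $F$ coinciding with $E$ outside a ball $B(x,r)$, one has $P(E) - P(F) \leq \int_{F\setminus E} \abs{H} + \int_{E\setminus F}\abs{H} \leq \norm{H}_{L^p(B(x,r))} \abs{E\triangle F}^{1-1/p}$, and since $\abs{E\triangle F}\leq \abs{B(x,r)} = \omega_N r^N$, the right-hand side is controlled by $C r^{N(1-1/p)} = C r^{N-N/p}$. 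When $p>N$ this exponent exceeds $N-1$, so $E$ is a perimeter quasi-minimizer with gauge $r^{\alpha}$ for $\alpha$ related to $(p-N)/p$, which is precisely the hypothesis under which the De Giorgi–type regularity theory applies.

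First I would state precisely the quasi-minimality inequality, being careful about the local nature of $H$ (so the constant depends on the ball). Then I would invoke the density estimates and the $\varepsilon$-regularity theorem for perimeter quasi-minimizers: this gives that $\partial^* E$ is a $\Cder{1,\alpha}$ hypersurface with the stated Hölder exponent, and that the singular set $\Sigma = \partial E \setminus \partial^* E$ is closed with Hausdorff dimension at most $N-8$ (the same dimensional bound as for area-minimizing hypersurfaces, by the standard blow-up/dimension-reduction argument of Federer, which carries over verbatim to quasi-minimizers because blow-ups of quasi-minimizers are genuine minimal cones). The sharpening of the Hölder exponent to the stated $\alpha < (p-N)/2p$ comes from the optimal decay rate in the excess-improvement iteration; for $p=\infty$ the gauge decays like $r$, giving $\Cder{1,\alpha}$ for every $\alpha<1$, i.e. essentially Lipschitz-gradient.

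For the final refinement when $p=\infty$ and $N=2$: here $\partial^* E$ is a $\Cder{1,\alpha}$ curve in the plane whose generalized (distributional) curvature is the bounded function $-H$ (this is the Euler–Lagrange equation of~\eqref{eq:prelim-varcurv}, obtained by comparing with normal perturbations of $E$). A $\Cder{1}$ planar curve whose curvature, in the weak sense, is an $L^\infty$ function is automatically $\Cder{1,1}$: parametrize by arc length, so that the unit tangent $\tau(s)$ satisfies $\tau'(s) = \kappa(s)\nu(s)$ with $\kappa \in L^\infty$; then $\tau$ is Lipschitz, hence the curve is $W^{2,\infty} = \Cder{1,1}$. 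Also in dimension $N=2$ the singular set has dimension at most $2-8<0$, hence is empty, so $\partial E = \partial^* E$ is globally $\Cder{1,1}$.

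The main obstacle — or rather, the main thing to handle with care — is not any single deep step (all of this is classical) but the \emph{bookkeeping of the localization and the volume constraint $\abs{X}<+\infty$}: one must check that the comparison argument producing quasi-minimality only uses competitors that differ from $E$ on a compact set (so the finiteness of volume and perimeter of $E$ is preserved and $\int_X H$ makes sense), and that $H\in L^p_{loc}$ rather than $L^p$ is enough — which is exactly the ``slight abuse'' flagged in the footnote, since all the conclusions are local. I would therefore emphasize that the regularity statements are of purely local character and follow from the references~\cite{Ambrocorso, massari1994variational, Barozzi} once quasi-minimality is established, and the proof reduces to verifying that inequality.
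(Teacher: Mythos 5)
The paper does not prove Proposition~\ref{prop:regu_ambro}: it is stated as a citation to Ambrosio's lecture notes, with no accompanying argument in the text, so there is no in-paper proof to compare against.

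Your proposal follows the standard route and is largely correct. The reduction to quasi-minimality is right: comparing $E$ with any competitor $F$ satisfying $E\triangle F\subset\subset B(x,r)$ and using H\"older gives
\begin{equation*}
P(E;B(x,r))\leq P(F;B(x,r)) + \norm{H}_{L^p(B(x,r))}\,\abs{E\triangle F}^{1-1/p}\leq P(F;B(x,r)) + C\,r^{N-N/p},
\end{equation*}
and the exponent $N-N/p$ exceeds $N-1$ precisely because $p>N$. Rewriting $N-N/p=N-1+2\sigma$ yields $\sigma=(p-N)/(2p)$, and excess decay plus Federer dimension reduction then produce the $\Cder{1,\alpha}$ regularity for $\alpha<\sigma$ and the dimension bound $N-8$ on the singular set, as you say. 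Your remarks on the localization, the constraint $\abs{X}<+\infty$, and $L^p_{loc}$ versus $L^p$ are also the correct things to check.

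There is, however, a genuine gap in your treatment of $p=\infty$. You claim that ``the gauge decays like $r$, giving $\Cder{1,\alpha}$ for every $\alpha<1$.'' This is not what the quasi-minimality estimate gives: a deficit of order $r$ corresponds, through excess improvement, to normal-vector H\"older exponent $1/2$ --- which is exactly the $p\to\infty$ limit $(p-N)/(2p)\to 1/2$ of the formula in the first part of the proposition, not $\alpha<1$. To obtain $\Cder{1,\alpha}$ for all $\alpha<1$ in dimensions $N\geq 3$ one needs an additional bootstrap that is absent from your sketch: once $\partial^* E$ is locally a $\Cder{1,\gamma}$ graph, the graph function solves a quasilinear divergence-form equation with continuous coefficients and an $L^\infty$ right-hand side, and Calder\'on--Zygmund/Schauder estimates give $W^{2,q}$ for every $q<\infty$, hence $\Cder{1,\alpha}$ for every $\alpha<1$. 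Your arc-length argument for $N=2$ (the tangent is Lipschitz because the weak curvature is bounded, and $2-8<0$ kills the singular set) is correct and is a clean planar substitute for this bootstrap, but it does not address the higher-dimensional $\alpha<1$ claim of the proposition.
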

Let us comment on the term variational curvature. Let $x\in \partial^* E$. Up to a translation and rotation we may assume that $\partial^* F$ coincides locally with the graph of some $\Cder{1,\alpha}$ function $\psi: B(0,r)\rightarrow (-r,r)$ such that $\nabla \psi(0)=0$.
If $H$ is continuous in an open $A$, then it is possible to prove~\cite[Th. 1.1.3]{Ambrocorso} that the ``mean'' curvature is equal to $-\frac{1}{N-1} H$,
\begin{equation*}
 \frac{1}{N-1}\divx\left(\frac{\nabla \psi(z)}{\sqrt{1+\abs{\nabla \psi(z)}^2}}  \right) = \frac{1}{N-1} H\left((z,\psi(z))\right),
\end{equation*}
in the sense of distributions. If $N=2$, this equation holds in the classical sense and $\partial^*E\cap A$ is in fact $\Cder{2}$.

The integrability $p$ of $H$ is crucial. For instance, if $p=1$, it implies nothing on the regularity of $E$ since every set of finite perimeter has a variational mean curvature in $L^1$~\cite{Barozzi}. The case $p=N$ which we are interested in is a limit case, and counterexamples in~\cite{massari1994variational,elisabetta2013sets} are provided where  the Hausdorff dimension of $\partial E\setminus \partial^*E$ is more than $N-8$.

However, we may rely on the weak regularity theorem~\cite[Th.~3.6]{massari1994variational} (see also \cite{gonzales1993boundaries}) which ensures that for all $x\in \partial F$,\begin{equation}\label{eq:weak_regularity}
  1>  D_F(x)\eqdef \lim_{r\to 0^+} \frac{\abs{F\cap B(x,r)}}{\abs{B(x,r)}}>0.
\end{equation}
Furthermore, in the case of $N<8$, we have that $D_F(x) = 1/2$.

In particular, the topological boundary $\partial F$ is equal to the \textit{essential boundary} $\partial^M F$,
\begin{align*}
  \partial F &= \partial^M F \eqdef \enscond{x\in \RR^2}{\overline{D_F}(x)>0 \qandq \underline{D_F}(x)>0},\\
  \qwhereq \overline{D_F}(x)&=\limsup_{r\to 0^+} \frac{\abs{F\cap B(x,r)}}{\abs{B(x,r)}} \qandq \underline{D_F}(x)=\liminf_{r\to 0^+} \frac{\abs{F\cap B(x,r)}}{\abs{B(x,r)}}.
\end{align*}

Furthermore, it was shown in \cite{Paolini} that  if $H\in L^N(\RR^N)$, then $\partial^* E$ is a $C^{0,\alpha}$ hypersurface up to some possible singularities.
Thus, in the case where $p=N$, although Proposition \ref{prop:regu_ambro} cannot be applied, the boundary $\partial F$ does not contain wild singular points such as cusps or points of zero density. In Section \ref{sec:props_weak_reg}, we apply \eqref{eq:weak_regularity}, observing that this weak regularity holds uniformly for the boundaries of the level sets of solutions to ($\Pp_\la(f+w)$) in some low noise regime.

\subsection{Decomposition of boundaries into Jordan curves}\label{sec:jordan_decomp}
We shall occasionally rely on the results on the decomposition of sets with finite perimeter provided in~\cite{ambcasmas99}.

 Let $E$ be a set of finite perimeter. By~\cite[Corollary 1]{ambcasmas99}, $E$ can be decomposed into an at most countable union of its $M$-connected components
\begin{equation*}
  E =\bigcup_{i\in I \subseteq \NN} E_{i} \qwhereq P(E)=\sum_{i\in I} P(E_{i}), \quad \abs{E_{i}}>0.
\end{equation*}
and each $M$-connected component can be decomposed as 
$$
E_{i}=\interop(J_i^+)\setminus \bigcup_{j\in L_i}\interop(J_j^-)\qandq \partial^M E_{i} = J_i^+ \cup \bigcup_{j\in L_i} J_j^- \pmod{\Hh^{1}},
$$ where each $J_k^{\pm}$ is a \textit{rectifiable Jordan curve}, $L_i\eqdef\enscond{j\in \NN}{\interop(J_j^-)\subseteq \interop(J_i^+)}$. Here $\interop(J)$ denotes the interior of a Jordan curve (but when the context is clear, we shall also use $\interop$ to denote the topological interior).

Moreover,
\begin{equation*}
  P(E)=\sum_i \HDU{J_i^+} +\sum_j \HDU{J_j^-} \qandq P(\interop J_i^\pm)= \HDU{J_i^\pm} \mbox{ for all } i.
\end{equation*}

\begin{rem}\label{rem:decomp}
  Let $E\subset \RR^2$ with $\abs{E}<+\infty$ such that $P(E)=\int_E v$, where $v\in \partial J(0)$. Let us decompose $E$ into its $M$-connected components, $E =\bigcup_{i\in I} E_{i}$, where we can assume that $I$ is either $\NN$ or of the form $\{0, 1,\ldots, n\}$. We observe that $\{E_i\}_{i\in I, i\geq 1}$ yields the decomposition of $E\setminus E_0$ into its M-connected components. Hence,
\begin{align*}
  0&=  P(E)- \int_E v \\
   &= P(E_{0}) -\int_{E_{0}}v + P\left(\bigcup_{i\in I, i\geq 1} E_{i}\right)-\int_{\bigcup_{i\in I, i\geq 1} E_{i}}v.
\end{align*}
Since $P(E_{0}) -\int_{E_{0}}v\geq 0$ and $ P\left(\bigcup_{i\in I, i\geq 1} E_{i}\right)-\int_{\bigcup_{i\in I, i\geq 1} E_{i}}v\geq 0$, we deduce that those inequalities are in fact equalities. By induction, we deduce that for all $i\in I$,
\begin{equation*}
P(E_{i}) -\int_{E_{i}}v =0.
\end{equation*}
Now decomposing, $\partial^ME_{i}$ into rectifiable Jordan curves, this equivalent to 
\begin{align*}
0  &= \left(P(\interop J_i^+)-\int_{\interop J_i^+}v\right)  +\sum_{j\in L_i} \left(P(\interop J_j^-)+\int_{\interop J_j^-}v\right) 
\end{align*}
Since each Jordan curve $J$ satisfies $P(\interop J)-\abs{\int_{J}v}\geq 0$, we see that for all $i$ and $j$ in the decomposition,
\begin{equation*}
  P(\interop J_i^+)=\int_{J_i^+}v \qandq P(\interop J_j^-)=-\int_{J_j^-}v.
\end{equation*}
Similarly, we may prove that if $P(E)=-\int_E v$,
\begin{align*}
  P(E_i)&=-\int_{E_i}v,\\
  P(\interop J_i^+)&=-\int_{J_i^+}v \qandq P(\interop J_j^-)=\int_{J_j^-}v.
\end{align*}
\end{rem}


\section{Duality for the study of the low noise regime}\label{sec:duality}

\subsection{Dual problems and ``dual certificates''}

We are interested in solving:
\begin{equation}
\min_{u\in \LDD} J(u) + \frac{1}{2\la}\normLdeux{f-u}^2.\tag{$\Pp_\la(f)$}\label{eq-rof-primal}\end{equation}
where $J(u)=\int_{\RR^2} |Du|\in \RR_+\cup\{+\infty\}$.

Using the framework and notations of~\cite{EkelandTemam}, we set $V=\LDD$, $\La=Id$, $Y=\LDD$, $F=J$, $G=\frac{1}{2\la}\|\cdot-f \|^2$ and we compute the Fenchel-Rockafellar dual problem as
\begin{align}
  \sup_{v\in \partial J(0)} \langle f,v\rangle -\frac{1}{2\la} \normLdeux{v}^2, \tag{$\Dd'_\la(f)$}\label{eq-rof-dualsup}\\
  \mbox{or equivalently }\quad \inf_{v\in \partial J(0)} \normLdeux{\frac{f}{\la} - v}^2 \tag{$\Dd_\la(f)$}\label{eq-rof-dual}
\end{align}
It is easy to check that Problem~\eqref{eq-rof-primal} is stable in the sense of~\cite{EkelandTemam}. In particular, there exists a solution to\eqref{eq-rof-dualsup} and strong duality holds between \eqref{eq-rof-primal} and \eqref{eq-rof-dualsup}, namely $\inf\eqref{eq-rof-primal} = \sup \eqref{eq-rof-dualsup}$. In fact~\eqref{eq-rof-dual} is a projection problem onto a nonempty closed convex set, hence it always has a unique solution. 

Observe that formally, the limit of~\eqref{eq-rof} as $\la\to 0^+$ is the trivial problem
\begin{equation}
\min_{u\in \LDD} J(u) \quad \mbox{s.t.}\quad  u=f,
\tag{$\Pp_0(f)$}\label{eq:rof-noiseless}
\end{equation}
having $u=f$ as solution.
The dual associated with this ``exact reconstruction problem'' is
\begin{align}
  \sup_{v\in \partial J(0)} \langle f,v\rangle, \tag{$\Dd_0(f)$}\label{eq-constr-dual}
\end{align}
having $\partial J(f)$ solutions.
Here again, strong duality holds, since it is possible to prove that~\eqref{eq-constr-dual} is stable. However, a solution to~\eqref{eq-constr-dual} does not always exist since it may be that $\partial J(f)=\emptyset$.

The main point in studying the dual problems is that their solutions $v_\la$ are related to the primal solutions $u_\la$ by the extremality relations
\begin{align*}
  v_\la &\in \partial J(u_\la)\\
  v_\la &= \frac{1}{\la}(f-u_\la),
\end{align*}
which enables to study the support of $Du_\la$ (see Section~\ref{sec:prelim}).
For the noiseless problem, the extremality relation is $v\in \partial J(f)$, for every $v$ solution to~\eqref{eq-constr-dual}.
The term ``certificate'' stems from the fact that if $u\in\LDD$ and $v\in\LDD$ satisfy the extremality relations, then $u$ is a solution of the primal problem and $v$ is a solution of the dual problem.

\subsection{Low noise regimes and the minimal norm certificate}
We shall often consider noisy observations $f+w$, where $w\in \LDD$, and from now on we denote by $\ulaw$ (resp. $\vlaw$) the unique solution to~$\Pp_\la(f+w)$ (resp.~$\Dd_\la(f+w)$).

Given $\la_0>0$, $\alpha_0>0$, we consider the low noise regime
\begin{equation}\label{eq:intro-lnr}
  \lnr{\la_0}{\alpha_0}\eqdef \enscond{(\la,w)\in \RR_+\times \LDD}{0\leq\la \leq \la_0\qandq\normLdeux{w}\leq \alpha_0 \la }.
\end{equation}

The dual solution $\vlaw$ being the projection of $(f+w)/\la$ onto a convex set, the non-expansiveness of the projection yieds
\begin{equation*}
  \forall (\la,w)\in \RR^*_+\times\LDD, \quad \normLdeux{\vlao-\vlaw}\leq \frac{\normLdeux{w}}{\la}\leq \alpha_0. 
\end{equation*}
As a result, the properties of $\vlaw$ are governed by those of $\vlao$, and it turns out that the properties of $\vlao$ are governed, in the low noise regime, by those of a specific solution to~\eqref{eq-constr-dual}, as the next result hints. The proof is identical to the one in \cite{duvalpeyre13}.
\begin{prop}
  Let $f\in \LDD$, $J(f)<+\infty$, and assume that $\partial J(f)\neq \emptyset$. Let $\voo\in \LDD$ be the solution to \eqref{eq-constr-dual} with minimal $L^2$ norm. Then
   \begin{align*}
    \lim_{\la\to 0^+} \vlao = \voo \quad \mbox{ strongly in } \LDD,
  \end{align*}
\end{prop}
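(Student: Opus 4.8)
The statement asserts $\vlao \to \voo$ strongly in $L^2$ as $\la \to 0^+$, where $\vlao$ is the unique solution of the projection problem $\Dd_\la(f)$ (the projection of $f/\la$ onto the closed convex set $\partial J(0)$) and $\voo$ is the minimal-$L^2$-norm element of $\partial J(f)$. The plan is to exploit the variational inequality characterization of projections together with the assumption $\partial J(f) \neq \emptyset$, which guarantees the existence of $\voo$. First I would record that $\vlao = \mathrm{Proj}_{\partial J(0)}(f/\la)$, so that for every $w \in \partial J(0)$ the obtuse-angle inequality $\langle f/\la - \vlao,\ w - \vlao\rangle \le 0$ holds.

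The second step is to derive a uniform bound on $\normLdeux{\vlao}$. Applying the variational inequality with the fixed competitor $w = \voo \in \partial J(0)$ and rearranging gives $\normLdeux{\vlao - f/\la}^2 \le \normLdeux{\voo - f/\la}^2$ (optimality of the projection, which is itself cleaner than the variational inequality here), hence expanding and using $\langle f, \voo\rangle = J(f) = \sup \eqref{eq-constr-dual}$ — together with the fact that $\langle f, \vlao \rangle \le J(f)$ since $\vlao \in \partial J(0)$ — one obtains $\tfrac12 \normLdeux{\vlao}^2 \le \tfrac12 \normLdeux{\voo}^2$, i.e. $\normLdeux{\vlao} \le \normLdeux{\voo}$ for all $\la > 0$. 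Thus $\{\vlao\}_{\la \in (0,\la_0]}$ is bounded in $L^2$, and along any sequence $\la_n \to 0^+$ we may extract a weakly convergent subsequence $\vlao[\la_{n_k}] \rightharpoonup \bar v$. Since $\partial J(0)$ is closed and convex it is weakly closed, so $\bar v \in \partial J(0)$.

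The third step is to identify the weak limit as $\voo$. From optimality of the projection, $\la\langle f - \la \vlao,\ w - \vlao\rangle \ge 0$ for $w \in \partial J(0)$ rewrites, after dividing by $\la$, as $\langle f, w - \vlao\rangle \ge \la\langle \vlao, w - \vlao\rangle$; using $\langle f, \vlao\rangle \le J(f) \le \langle f, w\rangle$... more directly: passing to the limit $\la_{n_k}\to 0$ in $\langle f, w\rangle \ge \langle f, \vlao\rangle + \la_{n_k}\langle \vlao, w - \vlao\rangle$ (valid for all $w \in \partial J(0)$), the right-hand side has $\la_{n_k}\langle\vlao,\cdot\rangle \to 0$ by the uniform bound, and $\langle f, \vlao\rangle \to \langle f, \bar v\rangle$ by weak convergence, giving $\langle f, \bar v\rangle \ge \sup_{w\in\partial J(0)}\langle f, w\rangle$, hence $\bar v$ solves $\eqref{eq-constr-dual}$, i.e. $\bar v \in \partial J(f)$. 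Combining with weak lower semicontinuity of the norm, $\normLdeux{\bar v} \le \liminf_k \normLdeux{\vlao[\la_{n_k}]} \le \normLdeux{\voo}$, and since $\voo$ is the unique minimal-norm element of $\partial J(f)$ we conclude $\bar v = \voo$. Because the limit is independent of the subsequence, the whole net converges weakly: $\vlao \rightharpoonup \voo$. Finally, weak convergence plus convergence of norms, $\limsup_\la \normLdeux{\vlao} \le \normLdeux{\voo} \le \liminf_\la \normLdeux{\vlao}$, upgrades this to strong convergence in $L^2$.

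The main obstacle is purely a matter of bookkeeping the direction of inequalities correctly when juggling the projection optimality, the constraint $\vlao \in \partial J(0)$ (which gives $\langle f, \vlao\rangle \le J(f)$), and the identity $\langle f, \voo\rangle = J(f)$; once these are arranged the rest is a standard weak-compactness-plus-strict-convexity argument. Since the paper explicitly says "the proof is identical to the one in \cite{duvalpeyre13}", I would ultimately just cite that, but the sketch above is the self-contained version.
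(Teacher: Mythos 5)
Your overall strategy (uniform bound via comparison with the competitor $\voo$, weak compactness, identification of the weak limit as an element of $\partial J(f)$ with norm at most $\normLdeux{\voo}$, uniqueness of the minimal-norm element, upgrade to strong convergence via norm convergence) is the standard one and matches what the cited reference does; the paper itself simply defers to \cite{duvalpeyre13} here. Steps 2 and 4 of your argument are correct.

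However, there is a sign error in step 3 that, as written, breaks the identification of the limit. Since $\vlao = \mathrm{Proj}_{\partial J(0)}(f/\la)$, the obtuse-angle variational inequality is $\langle f/\la - \vlao,\ w - \vlao\rangle \le 0$ for all $w\in\partial J(0)$ (you wrote $\ge 0$). Multiplying by $\la>0$ gives $\langle f, w - \vlao\rangle \le \la\langle \vlao, w - \vlao\rangle$, i.e. $\langle f, w\rangle \le \langle f,\vlao\rangle + \la\langle\vlao, w-\vlao\rangle$. Passing to the limit along the weakly convergent subsequence correctly yields $\langle f, w\rangle \le \langle f,\bar v\rangle$ for all $w\in\partial J(0)$, hence $\langle f,\bar v\rangle \ge \sup_{w\in\partial J(0)}\langle f,w\rangle = J(f)$, which together with $\bar v\in\partial J(0)$ gives $\bar v\in\partial J(f)$. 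With your sign convention the inequality chain produces the useless $\langle f,\bar v\rangle\le\langle f,w\rangle$ for all $w$, which does not identify $\bar v$; your stated conclusion $\langle f,\bar v\rangle\ge\sup$ simply does not follow from the preceding line as written. An even cleaner route, and one you already invoke in step 2, is to avoid the variational inequality altogether: the comparison $\normLdeux{\vlao-f/\la}^2\le\normLdeux{\voo-f/\la}^2$ expands (after multiplying by $\la/2$) to $\tfrac{\la}{2}\normLdeux{\vlao}^2 - \langle f,\vlao\rangle \le \tfrac{\la}{2}\normLdeux{\voo}^2 - J(f)$, so $\langle f,\vlao\rangle \ge J(f) - \tfrac{\la}{2}\normLdeux{\voo}^2$, and $\liminf_\la\langle f,\vlao\rangle\ge J(f)$ passes directly to $\langle f,\bar v\rangle\ge J(f)$ by weak convergence. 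Fixing the sign (or substituting the comparison argument) makes the proof correct.
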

We call $\voo$ the \textbf{minimal norm certificate} for $f$. It is also known as the \textit{minimal section} in maximal monotone operator theory. The goal of the present paper is to show that $\voo$ governs the support of the solutions in the low noise regime. In particular, $\voo$ determines whether the support of $D\ulaw$ is close to the support of $Df$ in that regime.

In the next paragraphs, we illustrate the minimal norm certificate in simple cases.

\subsection{The minimal norm certificate for calibrable sets}
\begin{prop}[Minimal norm certificates for calibrable sets]\label{prop:mincertcalib}
  Let $C\subseteq \RR^2$ be a bounded calibrable set and $f=\bun{C}$. Then the minimal norm certificate is $\voo=h_C\bun{C}$, where $h_C=\frac{P(C)}{\abs{C}}$.
  \label{prop-mincertif-calib}
\end{prop}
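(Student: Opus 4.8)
We want to show that for a bounded calibrable set $C$ with $f = \bun{C}$, the minimal norm element of $\partial J(f)$ is exactly $v_0 = h_C \bun{C}$ with $h_C = P(C)/|C|$. First I would verify that $v_0 = h_C\bun{C}$ actually belongs to $\partial J(f)$: this is essentially the definition of calibrability. Indeed, since $C$ is calibrable there is a calibration $z$ with $\|z\|_\infty \le 1$, $-\divx z = \lambda_C \bun{C} = h_C\bun{C}$, and $\int_{\RR^2}(z, D\bun{C}) = \int_{\RR^2}|D\bun{C}|$; by \eqref{eq:tvsubdiff}–\eqref{eq:tvsubdiffu} this says precisely $h_C\bun{C} = \divx(-z) \in \partial J(0)$ and $\int_{\RR^2} f\,v_0 = J(f)$, so $v_0 \in \partial J(f)$. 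In particular $\partial J(f) \ne \emptyset$, so the minimal norm certificate is well-defined and it suffices to show that no other element of $\partial J(f)$ has strictly smaller $L^2$ norm.

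**Main step: minimality.** Let $v \in \partial J(f)$ be arbitrary; I must show $\|v\|_{L^2} \ge \|h_C\bun{C}\|_{L^2} = h_C|C|^{1/2}$, with equality only for $v = v_0$. The key observation is that the level sets of $f = \bun{C}$ are just $C$ (for $0 < t \le 1$) and $\emptyset$ or $\RR^2$ otherwise, so by Proposition~\ref{prop:prelim-subdiff}(ii), any $v \in \partial J(f)$ satisfies $v \in \partial J(0)$ and $P(C) = \int_C v$. The constraint $v \in \partial J(0)$ gives, by the subdifferential inequality applied to $\pm\bun{F}$, that $|\int_F v| \le P(F)$ for every finite-perimeter set $F$; in particular $v$ "lives" relative to $C$ in a constrained way. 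Now decompose $v = v\bun{C} + v\bun{\RR^2\setminus C}$ and write
\begin{align*}
  \|v\|_{L^2}^2 = \int_C v^2 + \int_{\RR^2\setminus C} v^2 \ge \int_C v^2 \ge \frac{\left(\int_C v\right)^2}{|C|} = \frac{P(C)^2}{|C|} = h_C^2 |C|,
\end{align*}
where the second inequality is Cauchy–Schwarz on the finite-measure set $C$, and the last equality uses $h_C = P(C)/|C|$. This is exactly $\|v_0\|_{L^2}^2$. Moreover equality forces $v = 0$ a.e.\ on $\RR^2\setminus C$ (from the first inequality) and $v$ constant a.e.\ on $C$ (equality in Cauchy–Schwarz), and that constant must be $h_C$ to match $\int_C v = P(C)$; hence $v = h_C\bun{C} = v_0$, proving uniqueness of the minimizer as well. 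Since $v_0 \in \partial J(f)$ attains this lower bound, $v_0$ is the minimal norm certificate.

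**Where the subtlety lies.** The only genuinely nontrivial input is Proposition~\ref{prop:prelim-subdiff}, which identifies $\partial J(f)$ via the level-set conditions — in particular the fact that $v \in \partial J(f)$ implies $\int_C v = P(C)$ exactly (not just $\le$). Given that, the argument above is essentially a one-line Cauchy–Schwarz computation, so I do not expect a serious obstacle; one should just double-check the bookkeeping with the level set at $t=0$ (here $f \ge 0$, so $F(0) = \RR^2$ and there is no negative part to worry about) and confirm that the representative of $C$ is chosen so that the support statements are clean, which is covered by the conventions in Section~\ref{sec:prelim}. An alternative, slightly more abstract route would be to invoke the characterization of the minimal section of a maximal monotone operator as the element of minimal norm, and note that $\partial J(f)$ is the (affine-constrained) convex set $\{v \in \partial J(0) : \int_C v = P(C)\}$, whose projection of $0$ is computed by the same Cauchy–Schwarz/Lagrange-multiplier reasoning; but the direct computation is cleaner and self-contained.
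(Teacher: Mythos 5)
Your proposal is correct and matches the paper's second proof of this proposition almost exactly: the paper likewise observes that calibrability gives $h_C\bun{C}\in\partial J(\bun{C})$ and then establishes minimality via the one-line Cauchy--Schwarz bound $\sup\Dd_0(\bun{C}) = \langle v,\bun{C}\rangle \le \normLdeux{v}\sqrt{|C|}$. (The paper also offers a second, independent argument that identifies $\voo$ as the $\la\to 0^+$ limit of $\vlao$ using the explicit solution $\ulao=(1-\la h_C)_+\bun{C}$ from~\cite{beltvflow02}, but that is a different route from yours.)
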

We provide two different proofs of the above result, each highlighting different aspects of the minimal norm certificate.

\begin{proof}[Proof ($\voo$ as a limit)]
From \cite{beltvflow02}, we know that for a calibrable set $C\subseteq \RR^2$, the solution to \eqref{eq-rof-primal} with $f=\bun{C}$ is given by $\ulao = (1 -\la h_C)_+ \bun{C}$.
From the optimality conditions,
\begin{align*}
    \vlao &= \frac{1}{\la}(f-\ulao),
\end{align*}
we obtain that $\vlao =h_C\bun{C}$ provided $0< \la \leq \frac{1}{h_C}$. Taking the limit as $\la \to 0^+$, we obtain $\voo=h_C\bun{C}$.
\end{proof}
\begin{proof}[Another Proof ($\voo$ as a minimal norm element)]
  Observe that for all $f\in \LDD$ with $J(f)<+\infty$, and $v\in \LDD$, $v$ is a solution to~\eqref{eq-constr-dual} if and only if $v\in \partial J(f)$. For $C\subset \RR^2$ bounded calibrable, we obtain that $h_C \bun{C}$ is a solution to \eqref{eq-constr-dual}. It remains to prove that it is the one with minimal norm. 
   
Let $v\in \LDD$ be any solution to \eqref{eq-constr-dual}. By the Cauchy-Schwarz inequality
\begin{align*}
  \sup \Dd_0(\bun{C}) = \langle v,\bun{C}\rangle \leq \normLdeux{v}\normLdeux{\bun{C}}= \sqrt{|C|} \normLdeux{v}.
\end{align*}
But $\normLdeux{h_C\bun{C}}=\frac{P(C)}{\sqrt{|C|}}= \frac{\sup \Dd_0(\bun{C})}{\sqrt{|C|}}$, so that $h_C\bun{C}$ has minimal norm.
\end{proof}

\subsection{The minimal norm certificate for smooth convex sets}\label{sec:MNC_convex}
Let $C$ be a nonempty open bounded convex subset of $\RR^2$. Given $\rho>0$ we denote by $C_\rho$ the opening of $C$ by open balls with radius $\rho$, namely $C_\rho=\bigcup_{B(y,\rho)\subseteq C} B(y,\rho)$.
 For $f=\bun{C}$, it is proved in~\cite{altercalib05,alterconvex05,Chambolle10anintroduction} that the solution $\ulao$ to \eqref{eq-rof} is 
$$
\ulao = \left(1+ \la v_C \right)^+ \bun{C},
$$
where, by letting $R$ be such that $C_R$ is the maximal calibrable set in $C$, the function $v_C:\RR^2\to \RR$ is defined by
\begin{equation}\label{eq:convexvc}
v_C(x) \eqdef \begin{cases}
1/R & x\in C_{R}\\
1/r & x\in \partial C_r, ~ r\in [0,R) \\
0 & \text{otherwise.}
\end{cases}
\end{equation}
Since $\vlao = \la^{-1}(f- u_{\la,0})$, it follows that
\begin{equation}\label{eq:convexcertif}
\vlao(x) = \begin{cases}
v_C(x) & x\in C_\la \\
1/\la & x\in C\setminus C_\la\\
0& \text{otherwise.}
\end{cases}
\end{equation}

Now we assume that $C\subset \RR^2$ has $C^{1,1}$ boundary, and we let $\rho_0>0$ such that
\begin{align*}
  \kappa_{\partial C}(x)\leq \frac{1}{\rho_0} \quad \mbox{ for } \Hh^1\mbox{-a.e. }x\in \partial C,
\end{align*}
where $\kappa_{\partial C}$ is the curvature of $\partial C$ (defined $\Hh^1$-almost everywhere on $\partial C$).
We shall need the following lemma.
\begin{lem}[\cite{beltvflow02}]
  Let $C\subset \RR^2$ be a bounded open convex set. The following conditions are equivalent:
\begin{itemize}
  \item there exists $\rho >0$ such that $C=C_\rho$;
  \item $\partial C$ is of class $C^{1,1}$ and $\ess \sup_{p\in \partial C}\kappa_{\partial C}(p) \leq \frac{1}{\rho}$.
\end{itemize}
\end{lem}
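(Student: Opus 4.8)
The plan is to factor the equivalence through a single geometric property and then appeal to the classical ``rolling ball'' theorem. Fix $\rho>0$ and say that $C$ satisfies the \emph{interior $\rho$-ball condition} if for every $x\in\partial C$ there is a point $y\in\RR^2$ with $x\in\partial B(y,\rho)$ and $B(y,\rho)\subseteq C$. (The existential quantifier in the first bullet is harmless: once $\partial C\in C^{1,1}$, the number $\ess\sup_{\partial C}\kappa_{\partial C}$ equals $1/\rho^\ast$ for a finite $\rho^\ast>0$ because $C$ is bounded, and what follows shows $C=C_\rho$ holds exactly when $\rho\le\rho^\ast$.) I would first record three elementary facts: $C_\rho\subseteq C$; $C_\rho$ is open; and $C_\rho$ is convex, being the Minkowski sum $Y+B(0,\rho)$ of the convex inner parallel set $Y\eqdef\enscond{y\in\RR^2}{B(y,\rho)\subseteq C}=\bigcap_{\abs{v}<\rho}(C-v)$ with a ball.

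\emph{Step 1: $C=C_\rho$ if and only if $C$ satisfies the interior $\rho$-ball condition.} For the forward implication, given $x\in\partial C$ pick $x_n\in C=C_\rho$ with $x_n\to x$, write $x_n\in B(y_n,\rho)\subseteq C$, extract a convergent subsequence $y_n\to y$ (the $y_n$ are bounded since $C$ is), and check that $B(y,\rho)\subseteq C$, that $\abs{x-y}\le\rho$, and finally that $\abs{x-y}=\rho$ (otherwise $x\in B(y,\rho)\subseteq C$ contradicts $x\in\partial C$). For the converse, fix $x\in C$ and set $d\eqdef\dist(x,\partial C)>0$; a segment-crossing argument gives $B(x,d)\subseteq C$, so $x\in C_\rho$ if $d\ge\rho$. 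If $d<\rho$, choose a nearest boundary point $p$ (so $\abs{x-p}=d$), a supporting line $\ell$ of $C$ at $p$, and the unit normal $\nu$ of $\ell$ pointing away from $C$. Since $B(x,d)\subseteq C$ lies in the half-plane bounded by $\ell$ and meets $\ell$ at $p$, its center is $x=p-d\nu$; likewise the ball $B(y_p,\rho)\subseteq C$ furnished by the hypothesis has center $y_p=p-\rho\nu$. Hence $\abs{x-y_p}=\rho-d<\rho$, so $x\in B(y_p,\rho)\subseteq C$, i.e. $x\in C_\rho$. (Incidentally, under the interior $\rho$-ball condition a supporting line of $C$ at $p$ must also support the inner ball at $p$, hence is its tangent and is unique; therefore $\partial C\in C^1$, with a well-defined continuous outer normal $\nu$.)

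\emph{Step 2: the interior $\rho$-ball condition is equivalent to $\partial C\in C^{1,1}$ with $\ess\sup_{\partial C}\kappa_{\partial C}\le 1/\rho$.} This is Blaschke's rolling theorem in the plane, which I would either cite or justify as follows. Suppose a ball of radius $\rho$ rolls freely inside $C$. Near $p\in\partial C$, after a rotation sending $\nu(p)$ to $-e_2$ and a translation sending $p$ to $0$, write $\partial C$ as the graph $x_2=\phi(x_1)$ of a convex $C^1$ function with $\phi(0)=\phi'(0)=0$ and $C$ locally $\{x_2>\phi(x_1)\}$; the inner ball centered at $\rho e_2$ forces $0\le\phi(x_1)\le\rho-\sqrt{\rho^2-x_1^2}$, so the curvature (which exists $\Hh^1$-a.e. by Alexandrov's theorem) is $\le1/\rho$, and the squeeze also precludes corners. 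A standard argument — the substance of Blaschke's theorem — upgrades the pointwise curvature bound and the absence of corners to Lipschitz continuity of the Gauss map, i.e. $\partial C\in C^{1,1}$; one may also see the quantitative ``rolling'' inequality $-\tfrac1\rho\abs{p-q}^2\le\langle q-p,\nu(p)-\nu(q)\rangle\le0$ directly by adding the two inequalities coming from $q\notin B(p-\rho\nu(p),\rho)$ and $p\notin B(q-\rho\nu(q),\rho)$ to the two convexity inequalities. Conversely, if $\partial C\in C^{1,1}$ with $\ess\sup\kappa\le1/\rho$, then for each $p\in\partial C$ the ball $B(p-\rho\nu(p),\rho)$ lies in $C$: parametrizing $\partial C$ by arclength, its tangent turns at rate $\kappa\le1/\rho$, so a standard comparison argument for planar curves keeps $\partial C$ outside the circle of radius $\rho$ internally tangent at $p$; this is the interior $\rho$-ball condition.

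Combining Steps 1 and 2 yields the lemma. The main obstacle is Step 2: the rolling-ball equivalence is a genuine (though classical) regularity result, and the cleanest route is to cite it, the planar osculating-circle comparison above making it essentially self-contained. A point to bear in mind throughout is that $\kappa_{\partial C}$ is defined only $\Hh^1$-a.e., so every curvature bound — including the one in the statement — is to be read in the essential-supremum sense.
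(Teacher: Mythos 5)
The paper itself gives no proof of this lemma: it is stated with a citation to~\cite{beltvflow02} and then used. So there is no in-paper argument to compare against; I will evaluate your proposal on its own.

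Your reduction through the interior $\rho$-ball condition is the standard and correct route: Step~1 (the equivalence of $C=C_\rho$ with the rolling-ball property, using the tangency at a nearest boundary point to transport a small inner ball into a large one) is a clean and self-contained argument, your treatment of the quantifier on $\rho$ is right, and Step~2 is exactly Blaschke's rolling theorem, which is the proper tool. One caveat about the claim that the rolling inequality gives $C^{1,1}$ \emph{directly}: adding the containment inequalities $q\notin B(p-\rho\nu(p),\rho)$, $p\notin B(q-\rho\nu(q),\rho)$ to the convexity inequalities yields
\begin{equation*}
-\tfrac1\rho\,\abs{p-q}^2\;\le\;\langle q-p,\,\nu(p)-\nu(q)\rangle\;\le\;0,
\end{equation*}
but this only controls the component of $\nu(p)-\nu(q)$ along the chord $q-p$, and says nothing a priori about its transverse component. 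In the plane the gap closes because $\nu(p)-\nu(q)$ is exactly parallel to the tangent at the mean normal angle $\bar\theta$, while the chord $q-p$ makes angle at most $\Delta\theta/2$ with that same tangent (convexity, $\Delta\theta<\pi$); combining gives $\sin(\Delta\theta)\le\abs{q-p}/\rho$, which is the Lipschitz bound on the Gauss map and, in the limit $q\to p$, the bound $\kappa\le 1/\rho$ a.e. So the conclusion stands, but one must either supply this planar refinement or lean on the citation to Blaschke's theorem as you also propose; the rolling inequality alone is not quite ``directly'' $C^{1,1}$.
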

Since for $0<r\leq \rho_0$, $C_{\rho_0}\subseteq C_r\subseteq C$, we see that  $C_r=C$ for $0<r\leq \rho_0$.

As a result, $\la \mapsto \vlao$ is constant on $(0,\rho_0]$, and the minimal norm certificate is thus
\begin{equation}\label{eq:convexmincertif}
  \voo = v_C.
\end{equation}
It turns out that $\voo$ is precisely the subgradient constructed by Alter \textit{et al.}~\cite{alterconvex05} for the evolution of convex sets by the total variation flow. It is instructive to look at the associated vector field $z_0$ such that $\divx z_0=\voo$

For every $x\in \interop(C)\setminus\overline{C_R}$, there exists a unique $r(x)$ such that $x\in \partial C_{r(x)}$, and $x$ belongs to an arc of circle of radius $r(x)$. Defining $\nu(x)$ as the outer unit normal to this set, 
 define 
\begin{align*}
  z_0(x)\eqdef\left\{ \begin{array}{l}\nu(x) \mbox{ if }x\in \interop(C)\setminus C_R\\ 
      z_{C_R}(x) \mbox{ if } x\in C_R\\
    \overline{z}(x) \mbox{ if } x\in \RR^2\setminus C. 
  \end{array} \right.
\end{align*}
where $\overline{z}$ is a calibration of $\RR^2\setminus C$ (see Section~\ref{sec:calibout}). As for $z_{C_R}$, since $C_R$ is calibrable ($C_R$ is then the Cheeger set of $C$) there exists a vector field $z_{C_R}$ such that $|z_{C_R}|\leq 1$, $\theta(z_{C_R},-D\bun{C_R})=1$, and $\divx z_{C_R}=h_{C_R} \bun{C_R}$ with $h_{C_R}=P(C_R)/\abs{C_R}$. 

It is proved in~\cite{alterconvex05} that $\divx z_0=v_C$ (in the sense of distributions).

%
%

It is notable that the construction is quite similar to the one proposed by Barrozzi et al. in \cite{Barozzi} and studied in \cite{massari1994variational}. In particular, the $L^2$-minimality (or even $L^p$ minimality) of the above constructions is already noted in  \cite{massari1994variational}.

\section{Properties of the level sets in the low noise regime}\label{sec:proplev}

In this section, we rely on the properties of the minimal norm certificate $\voo$ to study the solutions of~\eqref{eq:prelim-suppdfinclude} for $v=\vlaw$ in a low noise regime. More precisely we study the elements of
\begin{align}\label{eq:proplev-eq}
  \FFlaw\eqdef\enscond{E\subset \RR^2}{\abs{E}<+\infty, \qandq  \pm\int_E \vlaw = P(E)},
\end{align}
for $(\la, w)\in\lnr{\la_0}{\alpha_0}$ with $\la_0>0$, $\alpha_0>0$ small enough.
In the following, we denote by $\Elaws$ or $\Elaw$ any nonempty element of $\FFlaw$. Let us emphasize that we allow the case $(\la,w)=(0,0)$, in which case $\vlaw$ in~\eqref{eq:proplev-eq} is the minimal norm certificate $\voo$. Typically, from Section~\ref{sec:prelim}, one may think of $\Elaw$ as a level set of $\ulaw$ (or $f$, for $(\la,w)=(0,0)$), but additional  sets may solve~\eqref{eq:proplev-eq}.

\subsection{Upper and lower bounds}
In the following lemmas, we prove that there exist uniform upper and lower bounds on the perimeters and the measures of all sets in $\FFlaw$ with $(\la,w)\in D_{1,\sqrt{\cD}/4}$.

\begin{lem}\label{lem:unif_bd_lev_sets}
  Let $\alpha_0\leq \frac{\sqrt{\cD}}{4}$, where $\cD=4\pi$ is the isoperimetric constant. 
Then,
\begin{align}
  \label{eq:proplev-persup}  \sup \enscond{P(\Elaws)}{\Elaws\in \FFlaw, \qandq (\la,w)\in \lnr{1}{\alpha_0} }&<+\infty,\\
  \label{eq:proplev-areasup} \sup \enscond{\abs{\Elaws}}{\Elaws\in \FFlaw, \qandq (\la,w)\in \lnr{1}{\alpha_0} }&<+\infty.
\end{align}
\end{lem}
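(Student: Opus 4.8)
The plan is to exploit the defining relation $\pm\int_E \vlaw = P(E)$ together with the fact that $\vlaw$ is close, in $L^2$, to the minimal norm certificate $\voo$ (or, for $(\la,w)=(0,0)$, equal to it), combined with the isoperimetric inequality. First I would write, for any $\Elaws\in\FFlaw$,
\begin{equation*}
  P(\Elaws) = \pm\int_{\Elaws}\vlaw \leq \normLdeux{\vlaw}\,\normLdeux{\bun{\Elaws}} = \normLdeux{\vlaw}\,\abs{\Elaws}^{1/2},
\end{equation*}
by Cauchy--Schwarz. The key observation is that $\normLdeux{\vlaw}$ is uniformly bounded on the low noise regime: since $\vlaw$ is the projection of $(f+w)/\la$ onto the closed convex set $\partial J(0)$, and $\vlao$ converges to $\voo$ as $\la\to 0^+$ (or one argues directly that $\normLdeux{\vlao}\le \normLdeux{\voo}$ or similar, using that $\voo\in\partial J(0)$ lies in the convex set), while non-expansiveness of the projection gives $\normLdeux{\vlao-\vlaw}\leq \normLdeux{w}/\la\leq \alpha_0$. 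Hence there is a constant $M$ with $\normLdeux{\vlaw}\leq M$ for all $(\la,w)\in\lnr{1}{\alpha_0}$; one should be slightly careful about the case $\la$ small but this follows from the cited convergence proposition.

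Next I would combine the perimeter bound with the isoperimetric inequality $\cD\min\{\abs{\Elaws},\abs{\RR^2\setminus\Elaws}\}\leq P(\Elaws)^2$. Since $\abs{\Elaws}<+\infty$, the minimum equals $\abs{\Elaws}$ unless $\abs{\RR^2\setminus\Elaws}$ is also finite, which cannot happen (the whole plane has infinite measure), so $\cD\abs{\Elaws}\leq P(\Elaws)^2$. Feeding in $P(\Elaws)\leq M\abs{\Elaws}^{1/2}$ yields $\cD\abs{\Elaws}\leq M^2\abs{\Elaws}$, i.e. either $\abs{\Elaws}=0$ or $\cD\leq M^2$. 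Thus as long as we can ensure $M^2<\cD$ — equivalently $\normLdeux{\vlaw}<\sqrt{\cD}$ — the only element of $\FFlaw$ is the empty set, which is excluded; more usefully, this shows the relevant sets cannot be too large. To get the genuine finite bound I would instead keep the two inequalities together more carefully: from $\cD\abs{\Elaws}\le P(\Elaws)^2$ and $P(\Elaws)\le \normLdeux{\vlaw}\abs{\Elaws}^{1/2}$ we only get a bound forcing $\normLdeux{\vlaw}\ge\sqrt\cD$ when $\abs{\Elaws}>0$; so the correct route is to bound $\normLdeux{\vlaw}$ strictly below $\sqrt{\cD}$ using the smallness of $\alpha_0\le\sqrt{\cD}/4$, which then shows $\FFlaw$ contains only sets of measure zero — but this contradicts nonemptiness only if such sets exist. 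I therefore expect the actual argument to bound $\abs{\Elaws}$ via a sharper inequality: splitting $\int_{\Elaws}\vlaw$ as $\int_{\Elaws}\voo + \int_{\Elaws}(\vlaw-\voo)$ and bounding the second term by $\alpha_0\abs{\Elaws}^{1/2}$, the first by $\normLdeux{\voo}\abs{\Elaws}^{1/2}$ is not enough, so one uses instead that $\voo=-\divx z_0$ with $\abs{z_0}\le 1$ only on a set whose complement decays, giving $\int_{\Elaws}\voo \le \int_{\partial^*\Elaws}\abs{z_0}\,d\Hh^1 \le P(\Elaws)$ with strict gain away from a fixed compact set.

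The main obstacle will be this last point: extracting a \emph{strict} improvement over $P(\Elaws)$ in the estimate $\int_{\Elaws}\voo\leq P(\Elaws)$ so that, after absorbing it against the isoperimetric term, one gets an honest finite upper bound rather than a dichotomy. Concretely, I would fix a large ball $B(0,R_0)$ outside which $f$ and hence $\voo$ (and its vector field) have controlled behavior — e.g. $\voo$ supported in or decaying outside $\supp(Df)$ — so that $\int_{\Elaws}\voo = \int_{\Elaws\cap B(0,R_0)}\voo + (\text{small})$, bound the first integral by $\normLdeux{\voo}\abs{B(0,R_0)}^{1/2}$ (a fixed constant) plus $\Hh^1(\partial^*\Elaws\setminus B(0,R_0))\cdot\theta$ with $\theta<1$, and then the isoperimetric inequality applied to $\Elaws$ closes the loop: $\cD\abs{\Elaws}\leq P(\Elaws)^2 \leq (C + \theta P(\Elaws) + \alpha_0\abs{\Elaws}^{1/2})^2$, which with $\theta<1$ and $\alpha_0$ small enough forces both $P(\Elaws)$ and $\abs{\Elaws}$ to be bounded by explicit constants depending only on $C,\theta,\alpha_0$. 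I would then conclude by taking suprema, establishing both \eqref{eq:proplev-persup} and \eqref{eq:proplev-areasup} simultaneously. The bookkeeping of which constants are uniform over the regime $\lnr{1}{\alpha_0}$ is routine once the strict inequality $\theta<1$ is in hand.
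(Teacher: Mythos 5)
Your first half is on the right track—you correctly diagnose that the crude Cauchy--Schwarz plus isoperimetric argument only yields a dichotomy, and that one needs a strict gap to close the loop. You also correctly anticipate that the argument must split the integral into a near part (over a fixed ball, bounded by a constant) and a far part (to be absorbed). But there is a genuine gap in how you propose to produce that strict gap: you invoke a vector field $z_0$ with $\divx z_0 = \voo$ such that $\abs{z_0}<\theta<1$ away from a fixed compact set. No such non-degeneracy is part of the hypotheses here; the lemma assumes only the source condition $\partial J(f)\neq\emptyset$, and nothing about the structure or decay of any particular $z_0$. (Such decay bounds appear only much later, in Section~\ref{sec:calibrable}, and under strong regularity assumptions on $\partial C$.) Your route would prove the lemma only for those special $f$ for which a decaying calibration is known.

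The mechanism the paper actually uses is more elementary and purely measure-theoretic: $L^2$-equiintegrability. Since $\la\mapsto\vlao$ is continuous on $[0,1]$ (using $\vlao\to\voo$ at $\la=0$), the family $\{\vlao\}_{\la\in[0,1]}$ is compact in $L^2$, hence equiintegrable: for every $\varepsilon>0$ there is $R$ with $\int_{\RR^2\setminus B(0,R)}\vlao^2\leq\varepsilon^2$ for \emph{all} $\la\in[0,1]$. Choosing $\varepsilon=\sqrt{\cD}/4$ and $\alpha_0\leq\varepsilon$, Cauchy--Schwarz on the far part gives $\int_{\Elaws\setminus B(0,R)}\vlaw \lesssim 2\varepsilon\sqrt{\abs{\Elaws\setminus B(0,R)}}$, and one then applies the isoperimetric inequality to $\Elaws\setminus B(0,R)$ (not to $\Elaws$) together with $P(\Elaws\setminus B(0,R))\leq P(\Elaws)+P(B(0,R))$ to reabsorb with a coefficient $2\varepsilon/\sqrt{\cD}=1/2<1$. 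This produces the strict gap you were looking for, with no assumption whatsoever about $z_0$. A smaller but real slip in your sketch is the bound $\normLdeux{\vlaw-\voo}\leq\alpha_0$: the hypothesis only controls $\normLdeux{\vlaw-\vlao}\leq\normLdeux{w}/\la\leq\alpha_0$, and $\vlao\neq\voo$ for $\la>0$; the paper handles this by taking $\sup_{\la\in[0,1]}\normLdeux{\vlao}$, which is finite by the same continuity/compactness argument.
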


\begin{proof}
  First, we prove~\eqref{eq:proplev-persup}. 
Since $\lim_{\la\to 0^+} \vlao=\voo$ in $\LDD$, the mapping $\la\mapsto \vlao$ is continuous on the compact set $[0,1]$, hence bounded. Moreover, the family $\{\vlao\}_{0\leq \la \leq 1}$ is $L^2$-equiintegrable so that given any $\varepsilon>0$, there exists $R>0$ such that $\int_{\RR^2\setminus B(0,R)}\vlao^2\leq \varepsilon^2$ for all $\la \in [0,1]$.
Let us also assume that $\alpha_0\leq \varepsilon$ (so that $\normLdeux{\vlaw-\vlao}\leq\frac{\normLdeux{w}}{\la}\leq \varepsilon$).

To simplify the notation, we denote by $\Elaws$ (rather than $\Elaw$) any nonempty set such that $P(\Elaws)=\pm\int_{\Elaws}\vlaw$.

Now, the triangle and the Cauchy-Schwarz inequalities yield
  \begin{align*}
    P(\Elaws)&\leq \abs{\int_{\Elaws}(\vlaw-\vlao)} + \abs{\int_{\Elaws}\vlao}\\
            &\leq\varepsilon\sqrt{\abs{\Elaws}}+ \abs{\int_{\Elaws\cap B(0,R)}\vlao} + \abs{\int_{\Elaws\setminus B(0,R)}\vlao}\\
            &\leq \varepsilon\sqrt{\abs{\Elaws}}+ \sqrt{\abs{B(0,R)}}\normLdeux{\vlao} + \sqrt{\abs{\Elaws\setminus B(0,R)}}\sqrt{\int_{\RR^2\setminus B(0,R)}\vlaw^2}\\
            &\leq \left(\varepsilon +\sup_{\la\in[0,1]}\normLdeux{\vlao}\right) \sqrt{\abs{B(0,R)}}+ 2\varepsilon\sqrt{\abs{\Elaws\setminus B(0,R)}}\\
  \end{align*}
%
  Recalling that $P(\Elaws\setminus B(0,R))\leq P(\Elaws)+ P(B(0,R))$, and using the isoperimetric inequality, we obtain
\begin{align*}
  \sqrt{\abs{\Elaws\setminus B(0,R)}}\leq  \frac{1}{\sqrt{\cD}} \left(P(\Elaws) + P(B(0,R))\right),
\end{align*}
 where is $\cD$ the isoperimetric constant. We choose $\varepsilon=\frac{\sqrt{\cD}}{4}$ and we define $C=\left(\varepsilon +\sup_{\la\in[0,1]}\normLdeux{\vlao}\right) \sqrt{\abs{B(0,R)}}$ so as to get 
\begin{align*}
  \label{eq:}
  P(\Elaws)-\frac{1}{2}\left(P(\Elaws) + P(B(0,R))\right)\leq C.
\end{align*}
We obtain that $P(\Elaws)$ is uniformly bounded in $\Elaws\in \FFlaw$, $(\la,w)\in \lnr{1}{\alpha_0}$.

As for~\eqref{eq:proplev-areasup}, the isoperimetric inequality yields
\begin{equation*}
  \abs{\Elaws}\leq \frac{1}{\cD} (P(\Elaws))^{2}
\end{equation*}
hence $\abs{\Elaws}$ is uniformly bounded in $\Elaws\in \FFlaw$, $(\la,w)\in \lnr{1}{\alpha_0}$.
\end{proof}

Conversely, the perimeters and areas of the solutions are also lower bounded, as the next result shows.
  
\begin{lem}\label{lem:lower_bound_lev_sets}
  Let $\alpha_0\leq \frac{\sqrt{\cD}}{4}=\sqrt{\pi}/2$. Then, 
\begin{align}
  \label{eq:proplev-perinf}  \inf \enscond{P(\Elaws)}{\Elaws\in \FFlaw, \Elaws\neq \emptyset \qandq (\la,w)\in \lnr{1}{\alpha_0} }&>0,\\
  \label{eq:proplev-areainf} \inf \enscond{\abs{\Elaws}}{\Elaws\in \FFlaw, \Elaws\neq \emptyset  \qandq (\la,w)\in \lnr{1}{\alpha_0} }&>0.
\end{align}
Moreover, there exists a number $N_0\in\NN$ such that the number of $M$-connected components $\Elaws$ and the number of Jordan curves in the essential boundary $\partial^M\Elaws$ is uniformly bounded by $N_0$ for all $\Elaws\in \FFlaw$, $(\la,w)\in \lnr{1}{\alpha_0}$.
\end{lem}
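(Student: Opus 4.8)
The plan is to reduce the whole statement to a single \emph{uniform} lower bound for an ``indecomposable piece'', and then assemble the three conclusions from it, using the upper bound on $P(\Elaws)$ already supplied by Lemma~\ref{lem:unif_bd_lev_sets}. The central claim I would establish first is: there is a universal constant $\delta>0$, depending only on $\alpha_0$, $\cD$ and the family $\{\vlao\}_{\la\in[0,1]}$, such that every \emph{nonempty} $X\subset\RR^2$ with $\abs{X}<+\infty$ satisfying $P(X)=\pm\int_X\vlaw$ for some $(\la,w)\in\lnr{1}{\alpha_0}$ obeys $\abs{X}>\delta$ and $P(X)>p_{\min}\eqdef\sqrt{\cD\delta}$.

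To prove this claim I would argue as in the proof of Lemma~\ref{lem:unif_bd_lev_sets}: since $\la\mapsto\vlao$ is continuous on the compact interval $[0,1]$, the family $\{\vlao\}_{\la\in[0,1]}$ is $L^2$-equiintegrable, so I may pick $\delta>0$ such that $\abs{A}\leq\delta$ implies $\left(\int_A\vlao^2\right)^{1/2}\leq\sqrt{\cD}/4$ for every $\la\in[0,1]$. Combining this with the non-expansiveness estimate $\normLdeux{\vlaw-\vlao}\leq\normLdeux{w}/\la\leq\alpha_0\leq\sqrt{\cD}/4$ and the triangle inequality gives $\left(\int_A\vlaw^2\right)^{1/2}\leq\sqrt{\cD}/2$ for every $(\la,w)\in\lnr{1}{\alpha_0}$ and every $A$ with $\abs{A}\leq\delta$. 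Now suppose, towards a contradiction, that $X$ is nonempty with $P(X)=\pm\int_X\vlaw$ and $\abs{X}\leq\delta$. Then Cauchy--Schwarz yields $P(X)=\abs{\int_X\vlaw}\leq\left(\int_X\vlaw^2\right)^{1/2}\abs{X}^{1/2}$, and the isoperimetric inequality $\cD\abs{X}\leq P(X)^2$ turns this into $P(X)\leq\tfrac12 P(X)$, forcing $P(X)=0$ and hence $\abs{X}=0$ --- contradicting nonemptiness. Thus $\abs{X}>\delta$, and the isoperimetric inequality gives $P(X)\geq\sqrt{\cD\abs{X}}>p_{\min}$. Applying this to $X=\Elaws$ itself --- which belongs to $\FFlaw$ and is nonempty --- proves \eqref{eq:proplev-perinf} and \eqref{eq:proplev-areainf}.

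For the bound on the number of components, I would use the decomposition of Section~\ref{sec:jordan_decomp}: write $\Elaws=\bigcup_{i\in I}E_i$ into its $M$-connected components (so $P(\Elaws)=\sum_i P(E_i)$) and decompose each $\partial^M E_i$ into rectifiable Jordan curves $J_i^+$ and $\{J_j^-\}_{j\in L_i}$, with $P(\interop J_k^\pm)=\Hh^1(J_k^\pm)$. Since $\vlaw\in\partial J(0)$, Remark~\ref{rem:decomp} ensures that each component $E_i$ and each Jordan domain $\interop J_k^\pm$ again satisfies $P(\cdot)=\pm\int_{(\cdot)}\vlaw$, i.e.\ is a nonempty element of $\FFlaw$; the uniform lower bound of the previous paragraph then gives $P(E_i)>p_{\min}$ and $\Hh^1(J_k^\pm)=P(\interop J_k^\pm)>p_{\min}$. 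As $P(\Elaws)$ equals the sum of the $\Hh^1$-lengths of all Jordan curves appearing in the decomposition, the number of such curves --- and hence also the number of $M$-connected components --- is strictly smaller than $P(\Elaws)/p_{\min}$, which by \eqref{eq:proplev-persup} is at most $P_{\max}/p_{\min}$ for a constant $P_{\max}$ independent of $\Elaws$ and $(\la,w)\in\lnr{1}{\alpha_0}$. I would then set $N_0\eqdef\lceil P_{\max}/p_{\min}\rceil$.

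The one genuinely delicate point --- and the reason the argument is not entirely routine --- is that the \emph{full} family $\{\vlaw:(\la,w)\in\lnr{1}{\alpha_0}\}$ need not be $L^2$-equiintegrable, since an $L^2$ perturbation of fixed norm may concentrate on a set of arbitrarily small measure; equiintegrability can only be invoked for $\{\vlao\}_{\la\in[0,1]}$. This forces the split $\vlaw=\vlao+(\vlaw-\vlao)$, which in turn needs the quantitative smallness $\alpha_0\leq\sqrt{\cD}/4$ so that $\left(\int_A\vlaw^2\right)^{1/2}$ stays strictly below the isoperimetric threshold $\sqrt{\cD}$ on small sets $A$ --- this is precisely where the constant $\sqrt{\cD}/4=\sqrt{\pi}/2$ in the hypothesis comes from.
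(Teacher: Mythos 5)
Your proof is correct and follows essentially the same route as the paper's: split $\vlaw=\vlao+(\vlaw-\vlao)$, invoke $L^2$-equiintegrability of the one-parameter family $\{\vlao\}_{\la\in[0,1]}$ plus the non-expansiveness bound $\normLdeux{\vlaw-\vlao}\leq\alpha_0$, and close the argument with Cauchy--Schwarz and the isoperimetric inequality; then pass from sets to their $M$-connected components and Jordan curves via Remark~\ref{rem:decomp} and divide the uniform perimeter upper bound of Lemma~\ref{lem:unif_bd_lev_sets} by the perimeter lower bound. The only cosmetic difference is that you bundle the two $\sqrt{\cD}/4$ contributions into a single bound $(\int_A\vlaw^2)^{1/2}\leq\sqrt{\cD}/2$ before applying Cauchy--Schwarz, whereas the paper applies Cauchy--Schwarz to the two pieces separately; this is the same estimate.
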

\begin{proof}
  By the $L^2$-equiintegrability of the family $\{\vlao\}_{0\leq \la \leq 1}$, for all $\varepsilon>0$, there exists $\delta$ such that for all $\Elaws\subset \RR^2$, with $\abs{\Elaws}\leq \delta$,
\begin{equation*}
  \int_{\Elaws} \vlao^2 \leq \varepsilon^2.
\end{equation*}

We choose $\varepsilon = \frac{\sqrt{\cD}}{4}$, $0<\alpha_0\leq \varepsilon$, and we consider by contradiction a set $\Elaws\in \FFlaw$ such that $0<\abs{\Elaws}\leq \delta$.
Then,
\begin{align*}
  P(\Elaws)  &\leq \abs{\int_{\Elaws}(\vlaw-\vlao)} + \abs{\int_{\Elaws}\vlao}\\
             &\leq \normLdeux{\vlaw-\vlao}\sqrt{\abs{\Elaws}} + \sqrt{\int_{\Elaws} \vlao^2}\sqrt{\abs{\Elaws}}\\
             &\leq 2\varepsilon \sqrt{\abs{\Elaws}}\leq \frac{1}{2} P(\Elaws),
\end{align*}
by the isoperimetric inequality. Dividing by $P(\Elaws)>0$ yields a contradiction, hence $\abs{\Elaws}> \delta$ for all $\Elaws\neq \emptyset$, that is~\eqref{eq:proplev-areainf}.
We deduce the uniform lower bound on the perimeter \eqref{eq:proplev-perinf} by the isoperimetric inequality.

Now, let us decompose the essential boundary of $\Elaws\in \FFlaw$ into \emph{at most countably} many non trivial Jordan curves $\enscond{J^+_i,J^-_j}{i\in I,j\in J, I\subseteq\NN,J\subseteq \NN}$.
By Remark~\ref{rem:decomp} we know that for each $\sigma\in\{-1,1\}$ and $j\in\NN$, $\abs{\int_{\interop{J^\sigma_j}}v_{\la,w}} = \Hh^1(J^\sigma_j)$, that is $(\interop J^\sigma_j)\in\FFlaw$. As a result $\Hh^1(J^\sigma_j)\geq \mu$, where $\mu$ is the infimum defined in~\eqref{eq:proplev-perinf} (in $I$, $J$, we only consider the non-trivial Jordan curves).
Expressing the perimeter of $\Elaws$ in terms of these Jordan curves, we get
$$
C\geq P(\Elaw) = \sum_{i\in I}\Hh^1(J^+_i)+ \sum_{j\in J}\Hh^1(J^-_j)\geq \mu(\card{I}+\card{J}),
$$
where $C$ is the supremum in~\eqref{eq:proplev-persup}. Hence the number of Jordan curves is at most $C/\mu$, and the same holds for the number of M-connected components.
\end{proof}  

Additionally, the next result shows that the level sets are uniformly contained in some large ball.
\begin{lem}\label{lem:largeball}
  Let $\alpha_0\leq \frac{\sqrt{\cD}}{4}=\sqrt{\pi}$. Then, there exists $R>0$ such that
\begin{equation*}
  \forall (\la,w)\in\lnr{1}{\alpha_0},\  \forall \Elaws\in \FFlaw,\quad \Elaws\subset B(0,R).
\end{equation*}
\end{lem}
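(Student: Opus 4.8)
The plan is to combine two uniform estimates that are already available. First, the far‑field smallness of the certificates: since $\vlao\to\voo$ in $L^2$ and $\voo\in L^2(\RR^2)$, the family $\{\vlao\}_{0\le\la\le1}$ is $L^2$‑equiintegrable (as used in the proof of Lemma~\ref{lem:unif_bd_lev_sets}), so fixing $\varepsilon=\sqrt{\cD}/4$ I would pick $R_0>0$ with $\int_{\RR^2\setminus B(0,R_0)}\vlao^2\le\varepsilon^2$ for every $\la\in[0,1]$; imposing $\alpha_0\le\varepsilon$ and using $\normLdeux{\vlaw-\vlao}\le\normLdeux{w}/\la\le\alpha_0$ (and $\vlaw=\voo$ when $(\la,w)=(0,0)$) this gives $\sqrt{\int_{\RR^2\setminus B(0,R_0)}\vlaw^2}\le 2\varepsilon$ uniformly on $\lnr{1}{\alpha_0}$. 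Second, the uniform perimeter bound: by Lemma~\ref{lem:unif_bd_lev_sets} there is $C<+\infty$ with $P(\Elaws)\le C$ for all $\Elaws\in\FFlaw$, $(\la,w)\in\lnr{1}{\alpha_0}$. I would then set $R\eqdef R_0+C/2$.

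Given $(\la,w)\in\lnr{1}{\alpha_0}$ and a nonempty $\Elaws\in\FFlaw$, I would decompose it into its (at most countably many) $M$‑connected components $\Elaws=\bigcup_i E_i$ as in Section~\ref{sec:jordan_decomp}. Since $\vlaw\in\partial J(0)$, Remark~\ref{rem:decomp} shows that each $E_i$ again belongs to $\FFlaw$, and that we may write $E_i=\interop(J_i^+)\setminus\bigcup_{j\in L_i}\interop(J_j^-)$ with $\Hh^1(J_i^+)\le P(E_i)\le P(\Elaws)\le C$. Because two points of a rectifiable Jordan curve are joined, along one of its two arcs, by a path of length at most half the total length, $\mathrm{diam}(J_i^+)\le C/2$, and hence $\mathrm{diam}(\overline{\interop(J_i^+)})\le C/2$ as well (the extreme points of the convex hull of $\overline{\interop(J_i^+)}$ lie on its topological boundary $J_i^+$). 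Now I argue by a dichotomy on the location of $E_i$. If $E_i\subseteq\RR^2\setminus B(0,R_0)$ up to a negligible set, then Cauchy--Schwarz and the isoperimetric inequality give
$$
P(E_i)=\abs{\int_{E_i}\vlaw}\le\sqrt{\abs{E_i}}\,\sqrt{\int_{\RR^2\setminus B(0,R_0)}\vlaw^2}\le 2\varepsilon\sqrt{\abs{E_i}}\le\frac{2\varepsilon}{\sqrt{\cD}}\,P(E_i)=\frac12 P(E_i),
$$
forcing $P(E_i)=0$ and hence $\abs{E_i}=0$, which contradicts $\abs{E_i}>0$. Therefore $\abs{E_i\cap B(0,R_0)}>0$, so I may choose a density‑one point $y$ of $E_i$ with $\abs{y}<R_0$; then $y\in\overline{\interop(J_i^+)}$, and since $E_i\subseteq\interop(J_i^+)$ has diameter at most $C/2$ we get $E_i\subseteq B(y,C/2)\subseteq B(0,R)$. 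As this holds for every component, $\Elaws=\bigcup_i E_i\subseteq B(0,R)$, uniformly over $\lnr{1}{\alpha_0}$.

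I expect the only genuinely non‑routine step to be the passage from ``bounded perimeter'' to ``bounded diameter'': this is precisely where the planar decomposition of Section~\ref{sec:jordan_decomp} is essential, since it lets one enclose each indecomposable piece of $\Elaws$ inside the filled interior of a single Jordan curve whose length — and therefore diameter — is controlled by the perimeter. Everything else reuses the isoperimetric/equiintegrability mechanism already exploited in Lemmas~\ref{lem:unif_bd_lev_sets} and~\ref{lem:lower_bound_lev_sets}, and the measure‑theoretic bookkeeping (density‑one points, working ``up to a negligible set'', and taking the representative $\interop(J_i^+)\setminus\bigcup_{j}\interop(J_j^-)$ of each $E_i$) is standard.
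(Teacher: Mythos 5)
Your proof is correct and follows essentially the same route as the paper's: equiintegrability of $\{\vlao\}_{\la\in[0,1]}$ and the bound $\normLdeux{\vlaw-\vlao}\le\alpha_0$ give a uniformly small $L^2$-tail of $\vlaw$ outside some $B(0,R_0)$, an isoperimetric contradiction then shows each piece of $\Elaws$ must meet $B(0,R_0)$, and the uniform perimeter bound from Lemma~\ref{lem:unif_bd_lev_sets} converts into a diameter bound, yielding a uniform enclosing ball. The only (cosmetic) differences are that the paper runs the contradiction Jordan-curve-by-Jordan-curve and concludes $(\interop J)\cap B(0,R_0)\neq\emptyset$ and $\interop J\subset B(0,R_0+C)$, whereas you run it component-by-component via Remark~\ref{rem:decomp}, use a density-one point of $E_i\cap B(0,R_0)$, and sharpen the final radius to $R_0+C/2$ by observing $\mathrm{diam}(J)\le\tfrac12\Hh^1(J)$.
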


\begin{proof} 
We begin with the same equiintegrability argument as in Lemma~\ref{lem:unif_bd_lev_sets}, choosing again $\varepsilon=\frac{\sqrt{\cD}}{4}$.
Now, let $\Elaws\in \FFlaw$. By the results of Section~\ref{sec:jordan_decomp}, we may further decompose, up to an $\Hh^1$-negligible set, its essential boundary  $\partial^M \Elaws$  into a countable union of Jordan curves $J$ which satisfy
\begin{equation*}
  \pm\int_{\interop J}\vlaw = P(\interop J) = \HDU{J}
\end{equation*}
Assume by contradiction that $J$ is such that $(\interop J)\cap B(0,R)=\emptyset$. Then by the isoperimetric inequality,
\begin{align*}
  P(\interop J)\leq \sqrt{\int_{\RR^2\setminus B(0,R)}\vlaw^2}\sqrt{\abs{\interop J}}\leq  \frac{2\varepsilon}{\sqrt{\cD}} P(\interop J). 
\end{align*}
Dividing by $P(\interop J)$ yields a contradiction for $\varepsilon=\frac{\sqrt{\cD}}{4}$ if $J$ is not trivial.
Hence $(\interop J)\cap B(0,R)\neq\emptyset$.
But the uniform bound~\eqref{eq:proplev-persup} also holds for $J$, hence there is some $C>0$ (independent from $(\la,w)\in \lnr{1}{\alpha_0}$) such that $\HDU{J}\leq C$. As a result, $\mathrm{diam} (\interop J)\leq C$ so that $(\interop J)\subset B(0,R+C)$, and since this holds for any $J$ which is involved in the decomposition of $\partial^M \Elaws$, it also holds for all $\Elaws\in \FFlaw$, uniformly in~$(\la,w)\in \lnr{1}{\alpha_0}$ . 
\end{proof}

\begin{rem}\label{rem:countunion}
  Let us divide $\FFlaw$ into two classes corresponding respectively to the condition $\int_E \vlaw =P(E)$ and $-\int_E \vlaw =P(E)$ (the empty set being the only element which belongs to both). A consequence of~\eqref{eq:proplev-areasup} is that \textit{each class is stable by finite or countable union or intersection}.
Indeed, if $E$ and $F$ are two elements of $\FFlaw$ such that $\int_E\vlaw=P(E)$ (and similarly for $F$), the submodularity of the perimeter yields
\begin{equation*}
  P(E\cap F)+P(E\cup F)\leq P(E)+P(F) = \int_E \vlaw +\int_F \vlaw = \int_{E\cap F} \vlaw + \int_{E\cup F}\vlaw.
\end{equation*}
Using the subdifferential inequality (on $\vlaw\in\partial J(0)$) we obtain that $P(E\cap F)= \int_{E\cap F}\vlaw$ and $P(E\cup F)= \int_{E\cup F}\vlaw$.
Iterating, we get for finite union or intersection $P(\bigcup_{k=1}^{n}E_k)= \int_{\bigcup_{k=1}^{n}E_k}\vlaw$ and $P(\bigcap_{k=1}^{n}E_k)= \int_{\bigcap_{k=1}^{n}E_k}\vlaw$. The lower semi-continuity of the perimeter together with $\abs{E_1}<+\infty$ yields
\begin{equation*}
  P\left(\bigcap_{k=1}^{\infty}E_k\right)\leq \liminf_{n\to+\infty} P\left(\bigcap_{k=1}^{n}E_k\right)=\lim_{n\to+\infty} \int_{\bigcap_{k=1}^{n}E_k}\vlaw =\int_{\bigcap_{k=1}^{\infty}E_k}\vlaw,
\end{equation*}
and the converse inequality holds by the subdifferential inequality. As for the union, we know from~\eqref{eq:proplev-areasup} that $\abs{\bigcup_{k=1}^{\infty}E_k}=\sup_{n\in\NN} \abs{\bigcup_{k=1}^{n}E_k}<+\infty$, hence
\begin{equation*}
  P\left(\bigcup_{k=1}^{\infty}E_k\right)\leq \liminf_{n\to+\infty} P\left(\bigcup_{k=1}^{n}E_k\right)=\lim_{n\to+\infty} \int_{\bigcup_{k=1}^{n}E_k}\vlaw =\int_{\bigcup_{k=1}^{\infty}E_k}\vlaw,
\end{equation*}
and the opposite inequality also holds, for the same reason as above.
\end{rem}

\subsection{Weak regularity}\label{sec:props_weak_reg}

In this section, we show that \eqref{eq:weak_regularity} holds uniformly on the boundaries of the  sets in $\FFlaw$ with $(\la,w)\in D_{1,\sqrt{\cD}/4}$. The proof of Proposition \ref{prop:weak_reg} is in fact almost identical to the proof of  \cite[Lem.~1.2]{gonzales1993boundaries}, however, it is included for the sake of completeness, and so as to emphasize the uniformity of this estimate with respect to $(\la,w)$.

\begin{prop}\label{prop:weak_reg} 
There exists $r_0>0$ such that for all $r\in (0, r_0]$ and $\Elaw\in\FFlaw$ with $(\la,w)\in D_{1,\sqrt{\cD}/4}$, 
\begin{equation}\label{eq:weak_reg_2D}
\forall x\in  \partial \Elaw,\quad \frac{\abs{B(x,r)\cap \Elaw}}{\abs{B(x,r)}}\geq \frac{1}{16}\qandq \frac{\abs{B(x,r)\setminus \Elaw}}{\abs{B(x,r)}} \geq \frac{1}{16}.
\end{equation}
\end{prop}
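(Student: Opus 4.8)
The plan is to mimic the classical density-lower-bound argument for (almost-)minimizers of perimeter (as in~\cite[Lem.~1.2]{gonzales1993boundaries}), but to track constants so that everything is uniform over $\FFlaw$ and $(\la,w)\in D_{1,\sqrt{\cD}/4}$. Fix $\Elaw\in\FFlaw$, say with $\int_{\Elaw}\vlaw = P(\Elaw)$ (the other sign is symmetric), and fix $x\in\partial\Elaw$. The starting point is the minimality property from Remark~\ref{rem:countunion}: since $\FFlaw$ is stable by intersection with balls — more precisely, competing $\Elaw$ against $\Elaw\setminus B(x,r)$ using the subdifferential inequality $P(\Elaw\setminus B(x,r))\mp\int_{\Elaw\setminus B(x,r)}\vlaw\geq 0$ — one obtains a quasi-minimality inequality of the form
\begin{equation*}
  P(\Elaw) \le P(\Elaw\setminus B(x,r)) + \Bigl|\int_{\Elaw\cap B(x,r)}\vlaw\Bigr|.
\end{equation*}
Set $m(r)\eqdef\abs{\Elaw\cap B(x,r)}$. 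For a.e. $r$ one has the standard decomposition $P(\Elaw\setminus B(x,r)) \le P(\Elaw; \RR^2\setminus \overline{B(x,r)}) + \Hh^1(\partial^*\Elaw\cap B(x,r)^{\text{(complement bits)}})\dots$; the clean way is: $P(\Elaw\setminus\overline{B(x,r)}) + P(\Elaw\cap B(x,r)) \le P(\Elaw) + 2\Hh^1(\partial B(x,r)\cap \Elaw)$ and $\Hh^1(\partial B(x,r)\cap\Elaw)= m'(r)$ for a.e. $r$. Combining, one derives
\begin{equation*}
  P(\Elaw\cap B(x,r)) \le 2m'(r) + \Bigl|\int_{\Elaw\cap B(x,r)}\vlaw\Bigr| \le 2m'(r) + \|\vlaw\|_{L^2(B(x,r))}\sqrt{m(r)}.
\end{equation*}

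The second ingredient is the uniform $L^2$-equiintegrability of the family $\{\vlao\}_{0\le\la\le1}$ together with the projection estimate $\normLdeux{\vlaw-\vlao}\le\normLdeux{w}/\la\le \alpha_0=\sqrt{\cD}/4$: given $\eps>0$ there is $\delta>0$ with $\int_{A}\vlao^2\le\eps^2$ whenever $\abs{A}\le\delta$, uniformly in $\la$, hence $\|\vlaw\|_{L^2(A)}\le \eps + \alpha_0$ for all $\abs{A}\le\delta$ and all $(\la,w)\in D_{1,\sqrt{\cD}/4}$. Choosing $\eps$ small and $r_0$ small enough that $\abs{B(0,r_0)}\le\delta$, we get $\|\vlaw\|_{L^2(B(x,r))}\le c_0$ for all $r\le r_0$ with $c_0$ an absolute constant (one can arrange $c_0\le\tfrac12\sqrt{\cD}=\sqrt\pi$, say). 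Then the isoperimetric inequality $\sqrt{\cD}\,\sqrt{m(r)}\le P(\Elaw\cap B(x,r))$ combined with the displayed bound yields, for a.e. $r\le r_0$ with $m(r)>0$,
\begin{equation*}
  (\sqrt{\cD}-c_0)\sqrt{m(r)} \le 2m'(r),
\end{equation*}
i.e. $\frac{d}{dr}\sqrt{m(r)}\ge c_1>0$ for an explicit absolute $c_1$ (e.g. $c_1 = (\sqrt\cD-c_0)/4 \ge \sqrt\pi/4$). Since $x\in\partial\Elaw=\partial^M\Elaw$ gives $m(r)>0$ for all $r>0$, integrating from $0$ to $r$ gives $\sqrt{m(r)}\ge c_1 r$, hence $m(r)\ge c_1^2 r^2$, i.e. $\abs{B(x,r)\cap\Elaw}/\abs{B(x,r)}\ge c_1^2/\pi$. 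Tuning $\eps,r_0$ one checks $c_1^2/\pi\ge 1/16$, which is the first inequality in~\eqref{eq:weak_reg_2D}. The second inequality follows by running the identical argument with $\Elaw$ replaced by its complement: one must check that the complement enjoys the same quasi-minimality, which it does because $\pm\int_E\vlaw=P(E)$ together with the subdifferential inequality on balls gives, for $r\le r_0$, $P(\Elaw\cup B(x,r)) = \int_{\Elaw\cup B(x,r)}\vlaw$ (cf. the union/intersection stability in Remark~\ref{rem:countunion}), so $\Elaw\cup B(x,r)$ is an admissible competitor and the same differential inequality holds for $\abs{B(x,r)\setminus\Elaw}$.

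The main obstacle — and the only place real care is needed — is making the estimate genuinely uniform in $(\la,w)$ and in the (possibly unbounded, possibly disconnected) set $\Elaw$. The point is that uniformity comes entirely from (i) the equiintegrability of $\{\vlao\}$, which is stated before the excerpt and holds on all of $\RR^2$, and (ii) the non-expansiveness of the projection controlling $\normLdeux{\vlaw-\vlao}$ by $\alpha_0$; neither depends on $\Elaw$. A minor technical annoyance is that the differential inequality $P(\Elaw\cap B(x,r))\le P(\Elaw)-P(\Elaw;\RR^2\setminus\overline{B(x,r)})+2m'(r)$ holds only for a.e.\ $r$ (the exceptional set being where $\Hh^1(\partial^*\Elaw\cap\partial B(x,r))>0$), so $\sqrt{m}$ should be handled as a function of bounded variation / absolutely continuous off a null set; this is routine and identical to~\cite{gonzales1993boundaries}. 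Everything else is bookkeeping of absolute constants to land on the clean value $1/16$.
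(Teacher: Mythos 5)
Your proof follows the paper's own argument essentially line for line: compare $\Elaw$ against the competitor $\Elaw\setminus B(x,r)$ in the minimality property of Proposition~\ref{prop:prelim-subdiff}(iii), decompose perimeters for a.e.\ $r$ to reach $P(\Elaw\cap B(x,r))\le 2m'(r)+\norm{\vlaw}_{L^2(B(x,r))}\sqrt{m(r)}$, control $\norm{\vlaw}_{L^2(B(x,r))}$ uniformly via the $L^2$-equiintegrability of $\{\vlao\}_{\la\in[0,1]}$ together with the projection bound $\normLdeux{\vlaw-\vlao}\le\sqrt{\cD}/4$, apply the isoperimetric inequality and integrate $\frac{d}{dr}\sqrt{m(r)}$. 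The constant chase also matches: $c_0=\sqrt{\cD}/2$ gives $c_1=\sqrt{\cD}/8=\sqrt{\pi}/4$, hence $c_1^2/\pi=1/16$.

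The one actual error is in your justification of the second inequality. You assert that $P(\Elaw\cup B(x,r)) = \int_{\Elaw\cup B(x,r)}\vlaw$ and invoke Remark~\ref{rem:countunion} for it, but that remark gives closure of the class $\FFlaw$ under unions \emph{of its own members}, and $B(x,r)\notin\FFlaw$ in general, so this equality is false (it would make $\Elaw\cup B(x,r)$ a minimizer, which it is not). Fortunately the claim is also superfluous: all you need is that $\Elaw\cup B(x,r)$ has finite Lebesgue measure, so it is a legitimate competitor in the minimality inequality, giving $P(\Elaw)-\int_{\Elaw}\vlaw\le P(\Elaw\cup B(x,r))-\int_{\Elaw\cup B(x,r)}\vlaw$; from there the same perimeter decomposition and isoperimetric estimate yield the differential inequality for $n(r)=\abs{B(x,r)\setminus\Elaw}$. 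This is precisely the paper's closing sentence (``simply compare $\Elaw$ with $\Elaw\cup B(x,r)$ and proceed as before''), and with that one replacement your proof coincides with the paper's.
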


\begin{proof}
  We give the proof for $P(\Elaw)=\int_{\Elaw}\vlaw$, the other case being similar.
Since $\{v_{\la,0} \}_{\la\in [0,1]}$ is equiintegrable, there there exists $r_0>0$ such that for all subsets $E\subset \RR^2$ with $\abs{E}\leq \pi r_0^2$,
\begin{equation}\label{eq:equi_int}
\left(\int_{E} \abs{v_{\la,0}}^2\right)^{1/2} \leq \frac{\sqrt{\cD}}{4}.
\end{equation}
First observe that by optimality of $\Elaw$,
\begin{equation}\label{eq:compare}
P(\Elaw) - \int_{\Elaw}\vlaw \leq P(E\setminus B(x,r)) - \int_{\Elaw\setminus B(x,r)} \vlaw.
\end{equation}
For a.e.\ $r\in (0,r_0]$, $\Hh^1(\partial^* \Elaw \cap \partial B(x,r))=0$, so that~\eqref{eq:compare} yields
$$
\Hh^1(\partial^* \Elaw \cap B(x,r)) - \int_{\Elaw\cap B_r} \vlaw \leq \Hh^1(\partial B_r\cap \Elaw).
$$
By adding $\Hh^1(\partial B(x,r)\cap \Elaw)$ to both sides, it follows that
$$
P(\Elaw \cap B(x,r)) - \int_{\Elaw\cap B(x,r)} \vlaw \leq 2\Hh^1(\partial B(x,r)\cap \Elaw).
$$
By the Cauchy-Schwarz inequality, \eqref{eq:equi_int} and since $\norm{\vlaw- v_{\la,0}}_{L^2}\leq \sqrt{c_2}/4$
$$
P(\Elaw \cap B(x,r)) - \frac{\sqrt{\cD}\abs{\Elaw \cap B(x,r)}^{1/2}}{2} \leq 2\Hh^1(\partial B(x,r)\cap \Elaw).
$$
The isoperimetric inequality then implies that
$$
\sqrt{\cD}\abs{\Elaw\cap B(x,r)}^{1/2}\leq 4\Hh^1(\partial B(x,r)\cap \Elaw).
$$
Let $g(r) = \abs{\Elaw\cap B(x,r)}$. Then $g(r)>0$ since $x\in \partial \Elaw$, and for a.e. $r$,
$g'(r) = \Hh^1(\partial B(x,r)\cap \Elaw)$. Therefore, for a.e.\ $r\in (0,r_0]$,
$$
\sqrt{\cD}\leq 8 \frac{\mathrm{d}}{\mathrm{d}r}\sqrt{g(r)}.
$$
By integrating on both sides,
$$
r\sqrt{\cD} \leq 8 \sqrt{g(r)}.
$$
and the first inequality in \eqref{eq:weak_reg_2D} follows by recalling that $c_2=4\pi$.
The proof of $\abs{B(x,r)\setminus \Elaw} \geq \abs{B(x,r)}/16$ is similar: instead of comparing $\Elaw$ with $\Elaw\setminus B(x,r)$ in \eqref{eq:compare}, simply compare $\Elaw$ with $\Elaw\cup B(x,r)$ and proceed as before.

\end{proof}


\section{The extended support}\label{sec:extended}

Let $f\in \LDD$, with $J(f)<+\infty$, such that $\partial J(f)\neq \emptyset$, or equivalently that~\eqref{eq-constr-dual} has a solution (source condition). 
Let $\voo$ be the corresponding minimal norm certificate and let us define the extended support as
\begin{equation*}
  \ext(Df) \eqdef \overline{\bigcup\enscond{  \supp Dg}{\voo\in \partial J(g)}}
\end{equation*}
As we shall see in Section~\ref{sec:stab_extended_spt}, the extended support governs the location of $\Supp (D\ulaw)$ for $(\la,w)$ in some low noise regime.

\subsection{Properties of the extended support}
A first remark in view of Proposition~\ref{prop:prelim-suppdf} is that we may rewrite the extended support as 
\begin{equation}\label{eq:extdfperim}
  \ext(Df) = \overline{\bigcup\enscond{\partial^*E}{\abs{E}<+\infty \qandq  \pm\int_E \voo =P(E)}}
\end{equation}
The first inclusion is clear by Proposition~\ref{prop:prelim-suppdf}. The converse inclusion is obtained by considering, for any $E$ in the right hand-side, the function $g=\bun{E}$, so as to have $g\in L^2$ and $\voo\in \partial J(g)$ (since $\overline{\partial^*E}=\supp(Dg)$).

From the above equalities, we see that \textit{all the properties of Section~\ref{sec:proplev} (lower and upper boundedness of the perimeter, uniform boundedness\ldots) hold for the elements of the right hand-side whose union determines the extended support.}

The rest of the section is devoted to examples of minimal certificates, in the case of indicator function of convex calibrable sets or more general convex sets.

\subsection{Convex Calibrable sets}\label{sec:ext_calib}

Let $C\subset \RR^2$ be a bounded convex calibrable set. We wish to describe the extended support of $f=\bun{C}$. This may be done by looking at a vector field $z$ with divergence  $\voo$ (see Section~\ref{sec:calibrable}), which is more informative, or by the following approach.

By Proposition~\ref{prop:mincertcalib}, we know that the minimal norm certificate associated to $f=\bun{C}$ is $\voo=h_C\bun{C}$, where $h_C=\frac{P(C)}{\abs{C}}$. By~\eqref{eq:extdfperim}, we are thus led to solve
\begin{align}
  \label{eq:extendedcalibmoins}   \inf_{\substack{E\subset\RR^2\\\abs{E}<+\infty}} P(E)-h_C \abs{E\cap C},\\
  \label{eq:extendedcalibplus}  \mbox{and } \inf_{\substack{E\subset\RR^2\\\abs{E}<+\infty}} P(E)+h_C \abs{E\cap C}.
\end{align}
Problem~\eqref{eq:extendedcalibplus} is trivial and its only solution is $\emptyset$, so that we only focus on~\eqref{eq:extendedcalibmoins}. By Proposition~\ref{prop:prelimcalibminimal}, we see that $E=C$ is a minimizer. 
Moreover, since $C$ is convex, for all $E$ with finite perimeter $P(C\cap E)\leq P(E)$ with strict inequality whenever $\abs{E\setminus C}>0$. As a result, any other solution must satisfy $E\subseteq C$. But with this condition, either $E=\emptyset$ or $E$ is a solution to the Cheeger problem
\begin{equation*}
  \min_{E\subseteq C} \frac{P(E)}{\abs{E}}.
\end{equation*}
The uniqueness of the solution to the Cheeger problem inside any convex set is proved in~\cite{giusti78,alteruniq09}, and we already know that $C$ is optimal. As a result, either $E=\emptyset$ or $E=C$, and eventually
\begin{equation*}
  \ext(Df)= \partial C.
\end{equation*}

\subsection{Smooth convex sets}\label{sec:ext_smooth}
Let $C\subset\RR^2$ be a bounded open convex set with $\Cder{1,1}$ boundary. We describe the extended support of $f=\bun{C}$ by considering the minimal norm certificate $\voo$ defined in~\eqref{eq:convexvc}.
We need to study the solutions of 
\begin{align}
  \label{eq:extendedcvxmoins}   \inf_{\substack{E\subset\RR^2\\\abs{E}<+\infty}} P(E)-\int_E \voo,\\
  \label{eq:extendedcvxplus}  \mbox{and } \inf_{\substack{E\subset\RR^2\\\abs{E}<+\infty}} P(E)+\int_E \voo.
\end{align}
Since $\voo\geq 0$, we see that the only solution to~\eqref{eq:extendedcvxplus} is $\emptyset$.
As for~\eqref{eq:extendedcvxmoins}, the same convexity argument as above shows that any solution must be included in $C$. 

Now let $r\in [\rho_0, R]$, where $1/\rho_0\geq \ess\sup_{x\in\partial C}\kappa(x)$ and $C_r$ be the opening of $C$ with radius $r$ as defined  in Section~\ref{sec:MNC_convex}. Denoting by $\nu_{C_r}$ the outer unit normal to $\partial C_r$, we have
\begin{equation*}
  P(C_r)=\int_{\partial C_r}z_0\cdot \nu_{C_r}\d \Hh^1 =\int_{C_r}\divx z_0 =\int_{C_r}\voo,
\end{equation*}
hence $C_r$ is a solution to~\eqref{eq:extendedcvxmoins}, hence  $\ext(Df) \supseteq \overline{\bigcup\enscond{\partial C_r}{\rho_0\leq r\leq R}}$.

Let us prove that there is no solution $E$ such that the reduced boundary $\partial^*E$ intersects $C_R$. 
By Remark~\ref{rem:countunion}, the solutions to~\eqref{eq:extendedcvxmoins} are stable by intersection. If a solution $E$ is such that $E\cap C_R\neq \emptyset$, then $P(E\cap C_R)=\int_{E\cap C_R}\voo = h_{C_R}\abs{E\cap C_R}$ where $h_{C_R}=\frac{P(C_R)}{\abs{C_R}}$ and $E\cap C_R$ is a solution to the Cheeger problem
\begin{equation*}
  \min_{F\subseteq C_R} \frac{P(F)}{\abs{F}}. 
\end{equation*}
By uniqueness of the Cheeger set of $C_R$, we obtain that $E\cap C_R=C_R$.
Eventually, we have proved 
\begin{equation}
  \ext(Df) =\overline{\bigcup\enscond{\partial C_r}{\rho_0\leq r\leq R}}.
\end{equation}

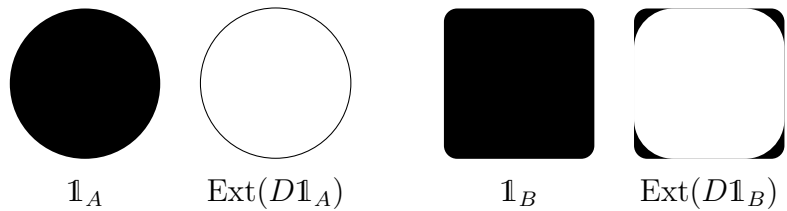
\begin{figure}[H]
\begin{center}
\begin{tabular}{c@{\hspace{15pt}}c@{\hspace{35pt}}c@{\hspace{15pt}}c}
        \begin{tikzpicture}[scale=2]

\fill[black] (.5,.5) circle(5mm);
\end{tikzpicture}
&
\begin{tikzpicture}[scale=2]

\draw[black] (.5,.5) circle(5mm);
\end{tikzpicture}&
\begin{tikzpicture}[scale=2]
        \fill  [rounded corners=5pt, fill=black] (0,0)--(1,0)--(1,1)--(0,1)--cycle;
        
        \end{tikzpicture}&
\begin{tikzpicture}[scale=2]
        \fill  [rounded corners=5pt, fill=black] (0,0)--(1,0)--(1,1)--(0,1)--cycle;
        \fill  [rounded corners=15pt, fill=white] (0,0)--(1,0)--(1,1)--(0,1)--cycle;
        
        \end{tikzpicture}
\\
$\bun{A}$ & $\ext(D\bun{A})$ &$\bun{B}$ & $\ext(D\bun{B})$
\end{tabular}
\end{center}

\caption{Examples of the extended support for two indicator functions. }

\end{figure}


\section{Support stability outside the extended support}\label{sec:stab_extended_spt}


In this section, we  prove the main result of this paper, Theorem \ref{thm:spt_stability}, which shows that, under the source condition $\partial J(f)\neq \emptyset$, as $\la\to 0^+$ and $\normLdeux{w}/\la$ is small enough, almost all topological boundaries of the level sets of the solutions to ($\Pp_\la(f+w)$) converge towards the topological boundaries of the corresponding  level sets of $f$ in the sense of Hausdorff convergence. If,  moreover, $\normLdeux{w}/\la\to 0$, the support of $D\ulaw$ is contained in arbitrarily small tubular neighborhoods of the extended support $\ext(Df)$. 
In Section \ref{sec:stab_grad}, we show that the width of this tube can be further characterized through the knowledge of the vector field $z_0$ associated with $\voo$. We also observe  that an interesting consequence of our main result is that the minimal norm certificate $\voo$ is constant on each connected component of the extended support.

Throughout this section, we denote by $\vlaw$ the solution of $(\Dd_\la(f+w))$ and let $E_{\la,w}$ be any set of finite perimeter such that $\abs{\int_{E_{\la,w}} v_{\la,w} } = P(E_{\la,w})$. We also denote by $u_{\la,w}$ the solution of $\Pp_\la(f+w)$. Finally, let the level sets of $f$ be denoted by $F^{(t)}$ (refer to \eqref{eq:lev_sets} for the definition of level sets).

We begin by recalling an elementary result, which holds under very weak assumptions.

\begin{prop}\label{prop:l2cv}
  Let $f\in \LDD$ such that $J(f)<+\infty$. Let $\{w_n\}_{n\in\NN}$, $\{\la_n\}_{n\in\NN}$ be sequences such that $w_n\in \LDD$, $\normLdeux{w_n}\to 0$, and $\la_n\to 0^+$.

  Then $\lim_{n\to+\infty}\normLdeux{\un-f}=0$ and $\supp(Df)\subseteq \liminf_{n\to +\infty} \supp(D\un)$. 
\end{prop}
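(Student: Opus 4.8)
The plan is to establish the two assertions separately, as they are essentially independent. For the $L^2$-convergence $\normLdeux{\un - f}\to 0$, the natural approach is to use the optimality of $\un$ for $\Pp_{\la_n}(f+w_n)$ against the competitor $f$ itself. Since $u\mapsto J(u) + \frac{1}{2\la_n}\normLdeux{f+w_n - u}^2$ is minimized at $\un$, comparing with $u=f$ gives
\begin{equation*}
  \la_n J(\un) + \frac12\normLdeux{f+w_n-\un}^2 \leq \la_n J(f) + \frac12\normLdeux{w_n}^2.
\end{equation*}
From this one reads off, first, that $\frac12\normLdeux{f+w_n-\un}^2 \leq \la_n J(f) + \frac12\normLdeux{w_n}^2 \to 0$, hence $\normLdeux{\un - f}\leq \normLdeux{f+w_n-\un} + \normLdeux{w_n}\to 0$; and second, that $\limsup_n J(\un)\leq J(f)$, which combined with the lower semicontinuity of $J$ for the $L^2$ (hence $L^1_{loc}$) topology gives $J(\un)\to J(f)$. (The latter is not strictly needed for the stated conclusion but is the standard by-product; I would mention it only if used downstream.) This step is routine and poses no real obstacle.

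The second assertion, $\supp(Df)\subseteq \liminf_n \supp(D\un)$, is the substantive one. Unwinding the definition of the inner limit, I must show: for every $x\in\supp(Df)$, $\limsup_n \dist(x,\supp(D\un)) = 0$, i.e. there exist points $x_n\in\supp(D\un)$ with $x_n\to x$. The plan is to argue by contradiction: suppose $x\in\supp(Df)$ but, along a subsequence (not relabeled), $\dist(x,\supp(D\un))\geq 2\eta>0$ for some $\eta>0$. Then $D\un$ vanishes on $B(x,2\eta)$, so each $\un$ is $\abs{Du_n}$-a.e.\ constant — hence (choosing the good representative) constant — on the ball $B(x,\eta)$, say $\un \equiv c_n$ there. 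The $L^2$-convergence $\un\to f$ on $B(x,\eta)$ then forces $f$ to be equal a.e.\ on $B(x,\eta)$ to a constant: indeed $c_n \to c$ in $L^2(B(x,\eta))$ since constants converging in $L^2$ on a set of positive finite measure converge as numbers, and $f = c$ a.e.\ on $B(x,\eta)$. But then $\abs{Df}(B(x,\eta)) = 0$, contradicting $x\in\supp(Df)$.

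The main point requiring care — and the step I expect to be the only real obstacle — is the passage from ``$\abs{D\un}(B(x,2\eta))=0$'' to ``$\un$ is genuinely constant on $B(x,\eta)$'' and the extraction of a common constant. One needs that a BV function with zero total variation on a ball is a.e.\ equal to a constant there (a standard fact, e.g.\ via the characterization of $BV$ and connectedness of the ball), and one must be slightly careful that the constants $c_n$ may depend on $n$; the $L^2$ convergence of $\un$ to $f$ restricted to the fixed ball $B(x,\eta)$ pins them down, using that $B(x,\eta)$ has finite positive Lebesgue measure so that $L^2$-convergence of the constant functions $c_n\mathbf{1}_{B(x,\eta)}$ is equivalent to convergence of the scalars $c_n$. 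Once this is in place the contradiction closes immediately, and since $\eta>0$ was arbitrary we conclude $\dist(x,\supp(D\un))\to 0$, i.e.\ $x\in\liminf_n\supp(D\un)$. No quantitative information (no use of $\partial J(f)\neq\emptyset$, no low-noise regime) is needed for this proposition — it is, as the text says, an elementary result holding under very weak assumptions.
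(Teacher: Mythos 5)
Your proof is correct, and the second half takes a genuinely different route from the paper. For the inner-limit inclusion, the paper observes that $u_n\to f$ in $L^1_{loc}$ and (invoking $J(u_n)\le J(f)$) that $Du_n\to Df$ weak-* as Radon measures, then uses lower semicontinuity on open balls to get $0<\abs{Df}(B(x,r))\le \liminf_n\abs{Du_n}(B(x,r))$, which forces $\supp(Du_n)$ to eventually hit $B(x,r)$. You instead argue by contradiction: if $\supp(Du_n)$ stays away from $x$ along a subsequence then each $u_n$ is a.e.\ constant on a fixed ball around $x$, and the $L^2$ convergence of $u_n\to f$ forces $f$ to be a.e.\ constant there, contradicting $x\in\supp(Df)$. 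Both are elementary; yours avoids any appeal to lower semicontinuity of the total variation and to weak-* compactness, and in fact sidesteps an issue present in the paper's version: the claim $J(u_n)\le J(f)$ does not follow from the optimality comparison with $u=f$ (the correct inequality has an extra $\tfrac12\normLdeux{w_n}^2$ on the right, so one only gets $J(u_n)\le J(f)+\tfrac{\normLdeux{w_n}^2}{2\la_n}$, which is not controlled without an assumption on $\normLdeux{w_n}/\la_n$). Your parenthetical ``by-product'' $\limsup_n J(u_n)\le J(f)$ suffers from the same gap, but since you explicitly do not use it, your argument stands; the paper's does too, because what is actually needed there is only $\abs{Df}(U)\le\liminf_n\abs{Du_n}(U)$ for open $U$, which follows from $L^1_{loc}$ convergence and lower semicontinuity of $\int_U\abs{D\cdot}$ without any bound on $J(u_n)$.
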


\begin{proof}
 For the sake of simplicity, we shall denote $\un$ by $\uno$. 

From the optimality of $\uno$, 
  \begin{equation*}
    \la_n J(\uno)+\frac{1}{2}\int_{\RR^2}(f+w_n-\uno)^2 \leq \la_n J(f),
  \end{equation*}
  we see that $\normLdeux{f-\uno}\to 0$ as $n\to+\infty$. Together with the fact that $J(\uno)\leq J(f)<+\infty$, that implies that $D\uno$ converges towards $Df$ in the weak-* topology of Radon measures. 
If $\supp(Df)=\emptyset$, there is nothing to prove. Otherwise, let $x\in \supp(Df)$. By weak-* convergence, for all $r>0$,
\begin{equation*}
0<\abs{Df}(B(x,r))\leq \liminf_{n\to+\infty} \abs{D\uno}(B(x,r)).
\end{equation*}
Hence,  $\limsup_{n\to +\infty}\dist(x,\supp(D\uno))\leq r$, and since this is true for all $r>0$, we obtain $x\in \liminf_{n\to+\infty} \supp(D\uno)$. This yields $\supp(Df)\subseteq \liminf_{n\to +\infty} \supp(D\uno)$.
\end{proof}

With the additional assumption that $\partial J(f)\neq \emptyset$, it is possible to describe the behavior of the level lines more precisely. In the following, we denote by $\Un{t}$ the $t$-level set of $u_{\la_n,w_n}$.

\begin{thm}\label{thm:spt_stability}
  Let $f\in \LDD$ such that $J(f)<+\infty$ and $\partial J(f)\neq \emptyset$.
  Let $\{w_n\}_{n\in\NN}$, $\{\la_n\}_{n\in\NN}$ be sequences such that $w_n\in \LDD$, $\la_n\to 0^+$, and ${\normLdeux{w_n}}/{\la_n}\leq \sqrt{\cD}/4$. .
  Then, up to a subsequence, for a.e.\ $t\in \RR$,
  \begin{align}\label{eq:limt}
 \lim_{n\to+\infty} \abs{\Un{t}\Delta \F{t}} =0,\qandq \lim_{n\to+\infty} \partial \Un{t}=\partial \F{t},
\end{align}
where the last limit holds in the sense of Hausdorff convergence.

If additionally, ${\normLdeux{w_n}}/{\la_n}\to 0$ as $n\to +\infty$, the full sequence satisfies
\begin{equation}\label{eq:suppliminfsup}
  \limsup_{n\to+\infty} \supp(D\un)\subseteq \ext(Df).
\end{equation}
\end{thm}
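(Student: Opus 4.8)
The plan is to exploit the non-expansiveness of the projection that defines the dual solutions, together with the compactness results of Section~\ref{sec:proplev}, to reduce everything to a statement about the minimal norm certificate $\voo$. First I would recall that $\vlnwn$ is the $L^2$-projection of $(f+w_n)/\la_n$ onto $\partial J(0)$, so $\normLdeux{\vlnwn - \vlno}\leq \normLdeux{w_n}/\la_n$, and by the proposition in Section~\ref{sec:duality} (Duval--Peyr\'e) we have $\vlno\to\voo$ strongly in $\LDD$ as $\la_n\to 0^+$. Hence, under the extra hypothesis $\normLdeux{w_n}/\la_n\to 0$, we get $\vlnwn\to\voo$ strongly in $\LDD$.

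For the level-set part \eqref{eq:limt}: by the coarea formula and Fubini, for a.e.\ $t$ the level set $\Un{t}$ satisfies $P(\Un{t})=\pm\int_{\Un{t}}\vlnwn$, i.e.\ $\Un{t}\in\FF{\la_n}{w_n}$, and likewise the level sets $\F{t}$ of $f$ satisfy $P(\F{t})=\pm\int_{\F{t}}\voo$ by Proposition~\ref{prop:prelim-subdiff} applied to $\voo\in\partial J(f)$. Using $\normLdeux{f-\un}\to 0$ and passing to a subsequence, we get $\Un{t}\to\F{t}$ in $L^1$ for a.e.\ $t$ (after a further diagonal subsequence over a countable dense set of levels, then interpolating via monotonicity), which gives $\abs{\Un{t}\Delta\F{t}}\to 0$. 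The Hausdorff convergence of $\partial\Un{t}$ to $\partial\F{t}$ then follows from the uniform lower density bound of Proposition~\ref{prop:weak_reg} (which prevents the boundaries from ``disappearing'' in the limit), the uniform diameter bound of Lemma~\ref{lem:largeball}, and the uniform lower bound on perimeters and areas of Lemma~\ref{lem:lower_bound_lev_sets}; these give equicontinuity-type control so that $L^1$ convergence of the sets upgrades to Hausdorff convergence of the topological boundaries. This is essentially the part where one must be careful: $L^1$ convergence of sets does not in general imply Hausdorff convergence of boundaries, and it is precisely the weak-regularity estimate that rescues it, so I would present this as the technical core.

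For \eqref{eq:suppliminfsup}: fix $x\notin\ext(Df)$; I want to show $x\notin\supp(D\un)$ for $n$ large. Suppose not; then along a subsequence there is $x_n\to x$ with $x_n\in\supp(D\un)=\overline{\bigcup_{t\neq 0}\partial^*\Un{t}}$ by Proposition~\ref{prop:prelim-suppdf}. So we can pick, for each $n$, a level $t_n$ and a point in $\partial^*\Un{t_n}$ within $1/n$ of $x$; set $E_n\eqdef\Un{t_n}\in\FF{\la_n}{w_n}$. By Lemmas~\ref{lem:unif_bd_lev_sets}, \ref{lem:lower_bound_lev_sets} and~\ref{lem:largeball}, the sets $E_n$ have uniformly bounded perimeter, uniformly bounded-below area and perimeter, and lie in a fixed ball $B(0,R)$; by compactness of BV (together with the lower density bound of Proposition~\ref{prop:weak_reg} to control the limit's boundary) we may extract $E_n\to E_\infty$ in $L^1$ with $\abs{E_\infty}>0$ and $P(E_\infty)\leq\liminf P(E_n)$. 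Since $\vlnwn\to\voo$ in $L^2$ and the $E_n$ are confined to $B(0,R)$, we pass to the limit in $P(E_n)=\pm\int_{E_n}\vlnwn$ to get $P(E_\infty)\leq\pm\int_{E_\infty}\voo$; the reverse inequality holds because $\voo\in\partial J(0)$, so $P(E_\infty)=\pm\int_{E_\infty}\voo$, i.e.\ $E_\infty$ is one of the sets whose reduced boundaries build $\ext(Df)$ via \eqref{eq:extdfperim}. Finally, the weak-regularity estimate \eqref{eq:weak_reg_2D} holding \emph{uniformly} in $n$ forces the limit point $x$ to lie in $\partial^*E_\infty\subseteq\ext(Df)$: the points $x_n\in\partial^*E_n$ have both $\abs{B(x_n,r)\cap E_n}$ and $\abs{B(x_n,r)\setminus E_n}$ bounded below by $\abs{B(x_n,r)}/16$ for all small $r$, and this persists in the $L^1$ limit, so $x\in\partial^M E_\infty=\partial E_\infty$, contradicting $x\notin\ext(Df)$.

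The main obstacle I anticipate is the upgrade from $L^1$/weak-$*$ convergence of the level sets to Hausdorff convergence of their topological boundaries, and symmetrically the localization of the limit point $x$ onto $\partial^*E_\infty$: both hinge on the uniformity (in $(\la,w)$) of the density lower bound in Proposition~\ref{prop:weak_reg}, so I would make sure that proposition is invoked with its uniform constants and that the extracted subsequence is compatible with all the finitely many structural bounds ($N_0$ components/Jordan curves, radius $R$, perimeter bounds) simultaneously.
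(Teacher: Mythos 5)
Your proposal is correct and follows essentially the same route as the paper's proof: use Lemma~\ref{lem:largeball} to confine everything to a fixed ball so that $L^2$ convergence upgrades to $L^1$, obtain a.e.\ $t$ convergence of the level sets (the paper extracts directly from $L^1(\RR)$ convergence of $t\mapsto\abs{\Un{t}\triangle\F{t}}$ rather than via a diagonal/monotone-interpolation argument, but both work), invoke Proposition~\ref{prop:weak_reg}'s uniform density bounds to pass to topological-boundary Hausdorff convergence, and for the extended-support inclusion extract an $L^1$ limit set $E$ from compactness and pass to the limit in $P(E_n)=\pm\int_{E_n}v_{\la_n,w_n}$ using lower semicontinuity of the perimeter and $L^2$ convergence of $v_{\la_n,w_n}\to\voo$. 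The only small gap is that you never fix the sign $\pm$ before passing to the limit — the paper extracts a further subsequence so that the sign in $P(E_n)=\pm\int_{E_n}v_{\la_n,w_n}$ is constant — but this is a one-line fix and does not affect the substance.
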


\begin{rem}
  It is possible to reformulate~\eqref{eq:suppliminfsup} in the following way. By Lemma~\ref{lem:largeball}, there exists $R>0$ such that for all $n$, $\supp(D\un)\subseteq B(0,R)$ and $\ext(Df))\subseteq B(0,R)$ so that by~\cite[Thm. 4.10]{rockafellarwets}), \eqref{eq:suppliminfsup} is equivalent to
 \begin{itemize}
   \item \textit{(outer limit inclusion)} for all $r>0$, there exists $n_0\in \NN$ such that,
     \begin{equation*}
  \forall n\geq n_0,\quad     \supp(D\un) \subseteq T_r \eqdef  \enscond{x\in \RR^2}{\mathrm{dist}(x,\ext(Df))\leq r }.
     \end{equation*}
   \item \textit{(inner limit inclusion)} for all $r>0$, there exists $n_1\in \NN$ such that,
     \begin{equation*}
  \forall n\geq n_1,\quad    \supp(Df) \subseteq  \enscond{x\in \RR^2}{\mathrm{dist}(x,\supp(D\un))\leq r }.
     \end{equation*}
 \end{itemize}
The second equation of~\eqref{eq:limt} has a similar reformulation.
\end{rem}


\begin{proof}

By~Lemma~\ref{lem:largeball}, there exists some radius $R>0$ such that for any $t\neq 0$, any $n\in \NN$, the level set $\Un{t}$ of $\uno$ is included in $B(0,R)$ (since $(\la_n,w_n)\in D_{1,\sqrt{\cD}/4}$). The same also holds for the level sets $\F{t}$ of $f$.
As a result, $\supp(\uno)\cup \supp(f)\subseteq \overline{B(0,R)}$ and the $L^2$ convergence of $\uno$ towards $f$ also implies its $L^1$ convergence. But by Fubini's theorem,
\begin{equation*}
  0=\lim_{n\to+\infty}  \int_{\RR^2}\abs{\uno-f}=\lim_{n\to+\infty} \int_{\RR} \abs{\Un{t}\triangle\F{t}}\d t, 
\end{equation*}
so that, up to the extraction of a subsequence $(\unp)_{n'\in\NN}$, for a.e. $t\in \RR$, $\lim_{n\to+\infty}\abs{\Unp{t}\triangle\F{t}}=0$.

Now let us fix such $t\in \RR$, and such a subsequence $(\unp)_{n'\in\NN}$. By $L^1$ convergence of $\bun{\Unp{t}}$ towards $\bun{\F{t}}$, and the fact that $\abs{D\bun{\Unp{t}}}(\RR^2)=P(\Unp{t})$ is uniformly bounded (by Lemma~\eqref{lem:unif_bd_lev_sets}), the gradient $D\bun{\Unp{t}}$ converges towards $D\bun{\F{t}}$ in the weak-* topology. Repeating the same argument as in Proposition~\ref{prop:l2cv} above, we obtain that 
\begin{equation*}
  \partial \F{t}=\supp(D\bun{\F{t}}) \subseteq \liminf_{n'\to+\infty}\supp(D\bun{\Unp{t}}) =\liminf_{n'\to+\infty} \partial\Unp{t}.
\end{equation*}


Let us prove that $\limsup_{n'\to+\infty} \partial\Unp{t}\subseteq \partial\F{t}$. If $\partial\Unp{t}=\emptyset$ for all $n'$ large enough, then $\limsup_{n'\to+\infty} \partial\Unp{t}=\emptyset$ and there is nothing to prove. Otherwise, let $(x_{n'})_{n\in\NN}$ such that $x_{n'}\in  \partial\Unp{t}$ and (up to the additional extraction of a subsequence - that we do not relabel) $\lim_{n'\to+\infty}x_{n'}=x\in \RR^2$. By Proposition~\ref{prop:weak_reg}, for all $r \leq r_0$,
\begin{equation*}
  \abs{B(x_{n'},r)\cap \Unp{t}}\geq \frac{1}{16}\abs{B(x_{n'},r)},\qandq \abs{B(x_{n'},r)\setminus \Unp{t}} \geq \frac{1}{16}\abs{B(x_{n'},r)}.
\end{equation*}
By the dominated convergence theorem, we obtain for $n\to+\infty$,
\begin{equation*}
  \abs{B(x,r)\cap \F{t}}\geq \frac{1}{16}\abs{B(x,r)},\qandq \abs{B(x,r)\setminus \F{t}} \geq \frac{1}{16}\abs{B(x,r)}.
\end{equation*}
Since this holds for all $r\in (0,r_0]$, we see that $x\in \partial\F{t}$, hence $\limsup_{n'\to+\infty} \partial\Unp{t}\subseteq \partial\F{t}$.

To prove $\limsup_{n\to+\infty} \supp(D\uno)\subseteq \ext(Df)$, we consider the full sequence again and we now assume that ${\normLdeux{w_n}}/{\la_n}\to 0$ as $n\to +\infty$. We denote by $\vno$ the dual certificate $v_{\la_n,w_n}$. If $\supp(D\uno)=\emptyset$ for all $n'$ large enough, there is nothing to prove. Otherwise, let $(x_n)_{n\in\NN}$ such that $x_n\in\supp(D\uno)$ and (up to the extraction of a subsequence) $\lim_{n\to+\infty}x_n=x$ for some $x\in \RR^2$. By Proposition~\ref{prop:prelim-suppdf}, it is not restrictive to assume that $x_n\in \partial E_n$ for some $E_n\in \FFn$ (otherwise we may replace $x_n$ with $y_n\in \partial E_n$ such that $\abs{x_n-y_n}\leq 1/n$).

  By Lemma~\ref{lem:unif_bd_lev_sets} and~\ref{lem:largeball}, the family $\{E_n\}_{n\in\NN}$ is relatively compact in the $L^1$ topology (see \cite[Thm. 12.26]{maggi2012sets}), that is, there exists $E\subseteq \RR^2$ with finite measure such that, up to the extraction of a subsequence, $\lim_{n\to +\infty} \abs{E\triangle E_n}=0$ (we do not relabel the subsequence). Moreover, up to the additional extraction of a subsequence, we may assume that either for all $n$, $\int_{E_n}\vno =P(E_n)$, or for all $n$,
$\int_{E_n}\vno =-P(E_n)$. We deal with the first case, the other being similar.

Passing to the limit in the optimality equation for $E_n$, we get
\begin{equation}\label{lsc_perim}
  P(E)\leq \liminf_{n\to +\infty}P(E_n)=\lim_{n\to+\infty}\int_{E_n}\vno =\int_{E}\voo,
\end{equation}
by the lower semi-continuity of the perimeter, and since $\bun{E_n}$ (resp. $v_{\la_n,w_n}$) converges strongly in $\LDD$ towards $\bun{E}$ (resp. $\voo$). Since $\voo\in \partial J(f)\subseteq \partial J(0)$, the converse inequality also holds, so that $P(E)=\int_E\voo$,  and $E\in \FFoo$. By definition of the extended support, this means that $\partial^*E\subseteq \ext(Df)$, hence $\partial E\subseteq \ext(Df)$.

Simarly as above, we conclude that $x\in\partial E$ in the following way. By Proposition~\ref{prop:weak_reg}, for all $r \leq r_0$,
\begin{equation}\label{eq:reg_conseq}
  \abs{B(x_n,r)\cap E_{n}}\geq \frac{1}{16}\abs{B(x_n,r)},\qandq \abs{B(x_n,r)\setminus E_{n}} \geq \frac{1}{16}\abs{B(x_n,r)}.
\end{equation}
By the dominated convergence theorem, we obtain for $n\to+\infty$,
\begin{equation*}
  \abs{B(x,r)\cap E}\geq \frac{1}{16}\abs{B(x,r)},\qandq \abs{B(x,r)\setminus E} \geq \frac{1}{16}\abs{B(x,r)}.
\end{equation*}
Since this holds for all $r\in (0,r_0]$, we see that $x\in \partial E\subseteq \ext(Df)$. Hence $\limsup_{n\to+\infty} \supp(D\uno)\subseteq \ext(Df)$.
\end{proof}

%
%
%
%
%
%
%
%
%

\begin{rem}[On dimensions $N\geq 3$]
The are two key elements to the proof of Theorem \ref{thm:spt_stability}: \begin{enumerate}
\item  \textbf{Compactness.} Lemma~\ref{lem:unif_bd_lev_sets} and~\ref{lem:largeball} which give that there exists $R,L>0$ such that $P(E_n)<L$ and the fact that there exists $R$ such that $E_n\subset B(0,R)$. This  allows the required compactness result to be applied.
\item \textbf{Weak Regularity.} Proposition \ref{prop:weak_reg} which ensures that the boundaries of all level sets are uniformly weakly  regular.
\end{enumerate}
The difficulty with extending Theorem \ref{thm:spt_stability} to higher dimensions is that the second property of weak regularity is no longer true: In dimension $N$, for weak regularity, we would require that $$\lim_{\la,\normLdeux{w}/\la\to 0}\norm{v_{\la,w}-\voo}_{L^N}=0.$$ However, the natural topology for $\{\vlaw\}_{\la,w}$ is $L^2(\RR^N)$ and when $N\geq 3$, there is no guarantee that the boundaries of the level sets of $\ulaw$ do not have arbitrarily many singular points such as cusps, and it may be the case that there are level sets of $\ulaw$ arbitrarily far out with arbitrarily small measure and perimeter.

When $N\geq 3$, it is still true that there exists $L$ such that $P(E)\leq L$ for all $E\in\FFlaw$ with $(\la,w)\in D_{1,\sqrt{c_2}/4}$ and it is  possible to adapt the argument in the proof of Theorem \ref{thm:spt_stability} to conclude that for each $r>0$,
$$\lim_{(\la_0,\alpha_0)\to (0,0)} \sup \enscond{\HDU{\partial^*\Elaws\setminus T_r}}{\Elaws \in \FFlaw,\ (\la,\alpha)\in \lnr{\alpha_0}{\la_0}}=0.$$
However, we have no guarantee that there exists $\la_0,\al_0>0$ such that $\Hh^{N-1}(\partial^* E\setminus T_r) = 0$ for all $E\in \Ff_{\la,w}$ with $(\la,w)\in D_{\la_0,\al_0}$.
\end{rem}

As consequence of the support stability theorem, we obtain the following result on the minimal norm certificate.
\begin{cor}
Let $\Omega$ be any connected component of $\RR^2\setminus \mathrm{Ext}(f)$. Then $\voo$ is constant on $\Omega$. 
\end{cor}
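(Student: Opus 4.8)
The plan is to deduce the corollary from the main support-stability result, Theorem~\ref{thm:spt_stability}, specialised to the noiseless case $w_n=0$, together with the strong $L^2$-convergence $v_{\la,0}\to\voo$ as $\la\to0^+$. First I would record two elementary facts, both valid under the standing assumption $\partial J(f)\neq\emptyset$: (i) since $\voo\in\partial J(f)$, taking $g=f$ in the definition of $\ext(Df)$ gives $\supp(Df)\subseteq\ext(Df)$, so $Df$ vanishes on the open set $\RR^2\setminus\ext(Df)\supseteq\Omega$ and, $\Omega$ being connected, $f$ is a.e.\ equal to some constant $c_f$ on $\Omega$; (ii) the extremality relation $v_{\la,0}=\frac{1}{\la}(f-u_{\la,0})$ holds for every $\la>0$, so on any subset of $\Omega$ the certificate $v_{\la,0}$ is a.e.\ constant as soon as $u_{\la,0}$ is.

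Next I would fix $x_0\in\Omega$; since $\Omega$ is open and disjoint from $\ext(Df)$, $d\eqdef\dist(x_0,\ext(Df))>0$, and for any $r\in(0,d)$ and $\delta\in(0,d-r)$ the ball $B(x_0,\delta)$ is disjoint from $T_r$. Applying Theorem~\ref{thm:spt_stability} with $\la_n=1/n$, $w_n=0$ (whose hypotheses hold trivially) gives $\limsup_{n\to+\infty}\supp(Du_{\la_n,0})\subseteq\ext(Df)$, and hence, by the remark following that theorem, there is $n_0$ with $\supp(Du_{\la_n,0})\subseteq T_r$ for all $n\geq n_0$. Then $\abs{Du_{\la_n,0}}(B(x_0,\delta))=0$, so $u_{\la_n,0}$ is a.e.\ constant on the connected open set $B(x_0,\delta)$ for $n\geq n_0$, and so is $v_{\la_n,0}$ by (ii). Since $v_{\la_n,0}\to\voo$ strongly in $L^2(\RR^2)$, hence in $L^2(B(x_0,\delta))$, and the a.e.-constant functions form a closed subspace of $L^2(B(x_0,\delta))$ containing every $v_{\la_n,0}$ with $n\geq n_0$, I would conclude that $\voo$ is a.e.\ constant on $B(x_0,\delta)$.

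Finally I would promote this local statement to all of $\Omega$ by a connectedness argument: having shown that each point of $\Omega$ admits a ball neighbourhood on which $\voo$ is a.e.\ constant, I fix the value $c_0$ of $\voo$ on one such ball and set $A\eqdef\enscond{x\in\Omega}{\voo=c_0\text{ a.e.\ on some neighbourhood of }x}$; then $A$ is nonempty and open, while $\Omega\setminus A$ is open too (any $x\in\Omega\setminus A$ has a ball neighbourhood on which $\voo$ equals a.e.\ a constant $c_x\neq c_0$, so that whole ball lies in $\Omega\setminus A$), whence $A=\Omega$ by connectedness. The only substantive ingredient is Theorem~\ref{thm:spt_stability}, which is already established; the step needing a little care is just the twofold use of the connectedness of $\Omega$ — to ensure both that $f$ (hence the constant value of $v_{\la_n,0}$) is the same throughout $\Omega$ and that the local constancy of $\voo$ glues to global constancy.
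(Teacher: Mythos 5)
Your proof is correct and follows essentially the same strategy as the paper's: apply Theorem~\ref{thm:spt_stability} with $w_n=0$ to get that $u_{\la,0}$ (hence $v_{\la,0}=(f-u_{\la,0})/\la$) is locally constant away from $\ext(Df)$, pass to the $L^2$ limit $v_{\la,0}\to\voo$, and glue using connectedness of $\Omega$. The only difference is cosmetic: the paper exhausts $\Omega$ by the sets $\Omega_\delta$, whereas you patch ball-by-ball and run an explicit open/closed connectedness argument, which makes the final local-to-global step a bit more transparent.
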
 
\begin{proof}
For $\delta>0$, let $\Omega_\delta = \{x\in\Omega : \mathrm{dist}(\partial \Omega, x)>\delta\}$.
From Theorem  \ref{thm:spt_stability}, we know that for all $\delta$, there exists $\la_\delta>0$ such that for all $\la\in D_{\la_\delta, 0}$, $u_{\la,0}$ is constant on $\Omega_\delta$. Since $v_{\la,0} = (f-u_{\la,0})/\la$, it follows that $v_{\la,0}$ is also constant on $\Omega_\delta$. So, since $\voo$ is the $L^2$ limit of $v_{\la,0}$, $\voo$ must be constant on $\Omega_\delta$ for all $\delta>0$. Therefore, $\voo$ is constant on $\Omega$.
\end{proof}


\section{Support stablity for nonsmooth convex sets}\label{sec:no_source_cond}

The theory developed in Sections~\ref{sec:proplev}, \ref{sec:extended} and \ref{sec:stab_extended_spt} relies on the existence of a subgradient of the total variation for $f$ in the $L^2$ topology (source condition). As natural as it may seem, this hypothesis does not always hold even for simple signals (like the indicator function of a square). In some cases, however, there is a natural limit for the dual certificates $\vlao$ when considering another topology.

 This section studies the case of a union $\Omega$ of disjoint convex subsets of $\RR^2$ which are sufficiently far apart. If their boundary is not smooth enough, the source condition is not satisfied. Still, we shall prove that one can  guarantee support stability for the solutions of \eqref{eq-rof}. A notable example is the unit square where $\Omega=[0,1]^2$.

 As usual, througout this section, we let $\ulaw$ be the solution of ($\Pp_\la(f+w)$) and $\vlaw$ be the solution of ($\Dd_\la(f+w)$). We also recall the notation from Section~\ref{sec:duality} where
given any bounded open convex set $C$,   $C_\rho$ is the opening of $C$ by open balls of radius $\rho$ and there exists a unique function $r(x)$ such that $x\in\partial C_{r(x)}$ and $x$ belongs to an arc of a circle of radius $r(x)$.

\subsection{Dual certificates for unions of convex sets}\label{sec:dual_cert_cvx_sets}
Let $C$ be a bounded open convex subset of $\RR^2$. The dual certificate $\vlao$ associated with $f=\bun{C}$ is given in~\eqref{eq:convexcertif}.

Now, more generally, if $f = \bun{\Omega}$, where  $\Omega = \cup_{j=1}^M C^{(j)}$ and $\{C^{(j)}\}_{j=1}^M$  are bounded open convex sets such that given any $0\leq k\leq M$ and any permutation $\{i_1,\ldots, i_M\}$ of $\{1,\ldots, M\}$,
$$
E_{i_1,\ldots, i_k}\in \argmin\enscond{P(E)}{ P(E)<\infty, ~ \bigcup_{j=1}^k C^{(i_j)}\subset E\subset \RR^2\setminus \bigcup_{j=k+1}^M C^{(i_j)}}
$$
implies that  $P(E_{i_1,\ldots, i_k}) > \sum_{j=1}^k P(C^{(i_j)})$, then, as proved in \cite{altercalib05,alterconvex05}, the solution $u_{\la,0}$ to \eqref{eq-rof} is
$$
u_{\la,0} = \sum_{j=1}^M \left(1+\la v_{C^{(j)}} \right)^+ \bun{C^{(j)}},
$$
and consequently,
$$
v_{\la,0}(x) = \begin{cases}
v_{C^{(j)}}(x) & x\in C_\la^{(j)}, ~j=1,\ldots, M \\
1/\la & x\in C^{(j)}\setminus C_\la^{(j)}, ~j=1,\ldots, M\\
0& \text{otherwise.}
\end{cases}
$$

While $\lim_{\la\to 0}\norm{v_{\la,0}}_{L^2} = +\infty$, we observe that the function $$\voo \eqdef \sum_{j=1}^M v_{C^{(j)}}\in L^1(\RR^2).$$ Indeed, for each $j$, by the monotone convergence theorem
\begin{equation*}
\normLun{ v_{C^{(j)}}} = \int_{\RR^2} v_{C^{(j)}} =\lim_{n\to+\infty} \int_{\RR^2} v_{C^{(j)}}\bun{C^{(j)}_{1/n} } = \lim_{n\to+\infty}P(C^{(j)}_{1/n})=P(C^{(j)})<+\infty,
\end{equation*}
where $C^{(j)}_{1/n}$ denotes the opening of $C^{(j)}$ with radius $1/n$.

Moreover, since $v_{\la,0}\geq v_{\mu,0}$ for $\mu\geq\la$ and since for a.e. $x\in \RR^2$,
$$
\lim_{\la\to 0}v_{\la,0}(x) = \voo(x),
$$
if follows by the monotone convergence theorem that
\begin{equation*}
\norm{v_{\la,0} - \voo}_{L^1}\to 0, \qquad \la\to 0.
\end{equation*}
 As before, we may define the extended support of $f$ via $\voo$ as
\begin{align}\label{eq:ext_spt_cvx_sets}
\begin{split}
  \ext(Df)&\eqdef \overline{\enscond{\partial^* E}{\pm\int_E \voo = P(E),~ \abs{E}<\infty}}\\
& = \Omega\setminus \left(\bigcup_{j=1}^M \mathrm{int}\left(C^{(j)}_{R_j}\right)\right),
\end{split}
\end{align}
where $C^{(j)}_{R_j}$ is the maximal calibrable set inside $C^{(j)}$ for each $j=1,\ldots, M$. We remark that $v_{\la,0} = \voo$ on $\RR^2 \setminus \ext(Df)$.

\begin{rem}
In the limit case where equality may hold in $P(E_{i_1,\ldots, i_k}) \geq \sum_{j=1}^k P(C^{(i_j)})$, the extended support of $\bun{\Omega}$ may be larger than $\partial \Omega$. The case $\Omega = B(x_1,R)\cup B(x_2,R)$ and $\abs{x_1 - x_2} = \pi R$, is shown in Figure \ref{fig:twoBalls}. In this case, if $E$ is the convex hull of $\Omega$, then $P(E) = P(\Omega)$. In the absence of noise, the support of any TV regularized solution is simply $\partial \Omega$, however, the extended support is strictly larger than $\partial \Omega$. This is essentially reflected in the fact that the presence of any noise which shifts the two balls towards each other will necessarily result in additional level lines. We refer to~\cite{Allard3,caselles2012tv} for a detailed study of this example.
\end{rem}

\begin{figure}[H]
\begin{center}
\begin{tabular}{c@{\hspace{50pt}}c}
\begin{tikzpicture}[scale=2]

\fill[black] (.5,.5) circle(5mm);
\fill[black] (pi*.5+.5,.5) circle(5mm);
\end{tikzpicture}&
 \begin{tikzpicture}[scale=2]

\draw[black] (.5,.5) circle(5mm);
\draw[black] (pi*.5+.5,.5) circle(5mm);
\draw (.5,1) -- (pi*.5+.5,1);
\draw (.5,0) -- (pi*.5+.5,0);
\end{tikzpicture}\\
$\bun{\Omega}$ &$\ext(D \bun{\Omega})$
\end{tabular}
\end{center}
\caption{The extended support for the indicator function of a union of two balls.}
\label{fig:twoBalls}
\end{figure}
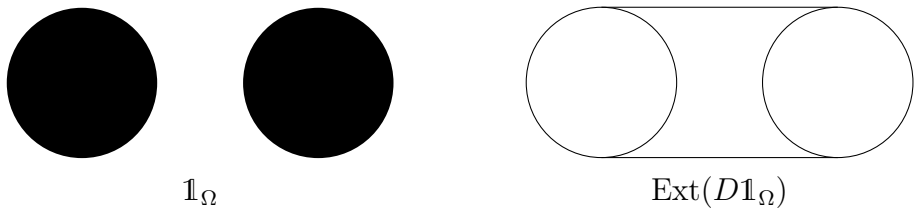

\subsection{Support stability}
In this section, we prove that the support of $\ulaw$ is stable around the extended support \eqref{eq:ext_spt_cvx_sets}, i.e.  its support is contained inside some neighborhood of $\ext(Df)$, 
whenever $\la$ and $ \normLdeux{w}/\la$ are sufficiently small.
We begin by proving some properties of the level sets.
\begin{prop}\label{prop:L1_props}
The following statements are true.
\begin{itemize}
\item[(i)] There exists $\al_0,\la_0,L >0$ such that $P(E)\leq L$ for all $E\in \FFlaw$ and $(\la,w)\in D_{1,\sqrt{\cD}/4}$.
\item[(ii)] There exists $R>0$ such that $E_{\la,w}\subset B(0,R)$ for all $\Elaw\in \FFlaw$ with $(\la,w)\in D_{1,\sqrt{\cD}/4}$.
\end{itemize}
\end{prop}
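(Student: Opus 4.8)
The plan is to run the same arguments as in the proofs of Lemma~\ref{lem:unif_bd_lev_sets} and Lemma~\ref{lem:largeball}, the only genuinely new point being that the family $\{v_{\la,0}\}_{\la\in(0,1]}$ is no longer $L^2$-equiintegrable --- indeed $\normLdeux{v_{\la,0}}\to+\infty$ as $\la\to0$. I would replace the equiintegrability arguments by the two structural facts obtained in Section~\ref{sec:dual_cert_cvx_sets}: first, $0\leq v_{\la,0}\leq\voo$ pointwise, with $\voo\in L^1(\RR^2)$ and $\normLun{\voo}=\sum_{j=1}^M P(C^{(j)})=P(\Omega)$; second, both $v_{\la,0}$ and $\voo$ are supported in $\overline{\Omega}$, which is a fixed bounded set. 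I would also keep using the non-expansiveness bound $\normLdeux{v_{\la,w}-v_{\la,0}}\leq\normLdeux{w}/\la\leq\sqrt{\cD}/4$ on $D_{1,\sqrt{\cD}/4}$ (exactly as in Section~\ref{sec:duality}), with the convention $v_{0,0}\eqdef\voo$ at the endpoint.

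For part~(i), given $E\in\FFlaw$, say with $\int_E v_{\la,w}=P(E)$ (the minus-sign case is easier, since $v_{\la,0}\geq0$), I would split $v_{\la,w}=(v_{\la,w}-v_{\la,0})+v_{\la,0}$ and use Cauchy--Schwarz together with $0\leq v_{\la,0}\leq\voo$ to get
\begin{equation*}
  P(E)\leq\abs{\int_E(v_{\la,w}-v_{\la,0})}+\int_E v_{\la,0}\leq\frac{\sqrt{\cD}}{4}\sqrt{\abs{E}}+P(\Omega).
\end{equation*}
The isoperimetric inequality $\sqrt{\abs{E}}\leq P(E)/\sqrt{\cD}$ then gives $\tfrac34P(E)\leq P(\Omega)$, i.e.\ $P(E)\leq L\eqdef\tfrac43P(\Omega)$. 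This settles (i); it is in fact a little cleaner than the corresponding step of Lemma~\ref{lem:unif_bd_lev_sets}, since there is no need to split off a large ball.

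For part~(ii), I would fix $R_0>0$ with $\overline{\Omega}\subset B(0,R_0)$, decompose $\partial^M E_{\la,w}$ (up to an $\Hh^1$-null set) into rectifiable Jordan curves as in Section~\ref{sec:jordan_decomp}, and observe, via the computation of Remark~\ref{rem:decomp}, that each such curve $J$ satisfies $\pm\int_{\interop J}v_{\la,w}=P(\interop J)=\Hh^1(J)$, hence $\interop J\in\FFlaw$. If $(\interop J)\cap B(0,R_0)=\emptyset$ then $v_{\la,0}$ vanishes on $\interop J$, so Cauchy--Schwarz and the isoperimetric inequality give
\begin{equation*}
  P(\interop J)=\pm\int_{\interop J}(v_{\la,w}-v_{\la,0})\leq\frac{\sqrt{\cD}}{4}\sqrt{\abs{\interop J}}\leq\frac14P(\interop J),
\end{equation*}
forcing $P(\interop J)=0$; thus every non-trivial Jordan curve of $\partial^M E_{\la,w}$ meets $B(0,R_0)$. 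Since $\interop J\in\FFlaw$, part~(i) bounds $\Hh^1(J)=P(\interop J)\leq L$, so $\mathrm{diam}(\interop J)\leq L/2$ and therefore $\interop J\subset B(0,R_0+L/2)$. As $E_{\la,w}\subset\bigcup_i\interop(J_i^+)$, this yields $E_{\la,w}\subset B(0,R)$ with $R\eqdef R_0+L/2$, uniformly in $(\la,w)\in D_{1,\sqrt{\cD}/4}$.

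The one real obstacle, compared with the source-condition setting of Sections~\ref{sec:proplev}--\ref{sec:stab_extended_spt}, is thus this loss of $L^2$-equiintegrability of $\{v_{\la,0}\}$, and the fix above hinges entirely on the explicit description of $v_{\la,0}$ and $\voo$ (non-negativity, monotonicity in $\la$, $L^1$-integrability of the limit, and compact support in $\overline{\Omega}$). A minor technical point I would need to handle is the endpoint $(\la,w)=(0,0)$: there $\voo$ need not belong to $L^2$, hence not literally to $\partial J(0)$, so the inequality $P(F)\geq\abs{\int_F\voo}$ used in the Jordan-curve step should be obtained by a monotone passage to the limit from the $v_{\la,0}\in\partial J(0)$ rather than invoked directly.
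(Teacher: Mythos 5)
Your proof is correct and follows the same strategy as the paper's: replace the $L^2$-equiintegrability argument of Lemmas~\ref{lem:unif_bd_lev_sets}--\ref{lem:largeball} with the pointwise bound $0\leq v_{\la,0}\leq\voo$ (so $\int_E v_{\la,0}\leq\normLun{\voo}=P(\Omega)$), the non-expansiveness bound $\normLdeux{v_{\la,w}-v_{\la,0}}\leq\normLdeux{w}/\la$, and the compact support of $v_{\la,0}$ in $\overline\Omega$. For~(ii) the paper argues at the level of the whole set $\Elaw$ and only gestures at the Jordan-curve decomposition via ``very similar to Lemma~\ref{lem:largeball}''; your version carries out that decomposition explicitly, which is what is actually needed when $\Elaw$ has several $M$-connected components far from each other, and it sharpens the radius to $R_0+L/2$. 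Your remark on the endpoint $(\la,w)=(0,0)$ is a legitimate care point that the paper leaves implicit.
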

\begin{proof}

To prove (i), 
recall from the discussion in Section \ref{sec:dual_cert_cvx_sets} that $\norm{v_{\la,0}}_{L^2} \leq \norm{\voo}_{L^1}$.
So, for all $E\in\FFlaw$ with $(\la,w)\in D_{1,\sqrt{c_2}/4}$,
 \begin{align*}
 P(E) = \pm\int_E \vlaw \leq \frac{\normLdeux{w}\abs{E}^{1/2}}{\la}+ \norm{v_{\la,0}}_{L^1}
 \leq \frac{ P(E)}{4}  + \norm{\voo}_{L^1},
 \end{align*}

 For the proof of (ii) is very similar to the proof of Lemma \ref{lem:largeball}. We first show that there exists $R>0$ such that $E_{\la,w}\cap B(0,R)\neq \emptyset$ for all $\Elaw\in\FFlaw$ with $(\la,w)\in D_{1,\sqrt{c_2}/4}$: let $R$ be such that $B(0,R)\supset \Omega$. For a contradiction, suppose that $\emptyset\neq E_{\la,w}\subset B(0,R)^c$. Then, since $v_{\la,0} = 0$ on $\Omega^c$, we have that
$$
P(E_{\la,w}) = \int_{E_\la,w}(v_{\la,w} - v_{\la,0})  \leq \frac{\norm{w}_{L^2} \abs{E_{\la,w}}^{1/2}}{\la}< \frac{P(E_{\la,w})}{4},
$$ 
which is impossible if $P(E_{\la,w}) > 0$. So, $E_{\la,w}\cap B(0,R)\neq \emptyset$ if $\Elaw\neq\emptyset$. Finally, since by (i), there exists $L>0$ such that $P(\Elaw)\leq L$ for all $(\la,w)\in D_{1,\sqrt{\cD}/4}$,  it follows that $\Elaw\subset B(0,R+L)$.

\end{proof}

When the source condition is not satisfied,  Proposition \ref{prop:weak_reg} cannot be applied directly to the level sets of $\ulaw$.  However, even if the source condition does not hold, there may still be a subset of $V$ of $\RR^2$ for which
$$
\lim_{\la\to 0}\norm{v_{\la,0}-\voo}_{L^2(V)}= 0.$$ In this case, one can argue along the lines of Proposition \ref{prop:weak_reg} to deduce that  there is still weak regularity on a subset of $\partial E$.  Note that for  characteristic functions on unions of convex sets as described in Section \ref{sec:dual_cert_cvx_sets}, we can let $V=\RR^2 \setminus \ext(Df)$ since $v_{\la,0} = \voo$ on $\RR^2\setminus \ext(Df)$. The precise regularity statement is given in the following proposition.
\begin{prop}\label{prop:weak_reg_partial}
Let $V\subset \RR^2$ be an open set. Suppose that
$$
\lim_{\la\to 0}\norm{v_{\la,0}-\voo}_{L^2(V)} = 0.
$$
Then, there exists $r_0>0$ such that for all $r\in [0, r_0]$ and $\Elaw\in\FFlaw$ with $(\la,w)\in D_{1,\sqrt{\cD}/4}$, if $x\in \partial \Elaw$ is such that $B(x,r_0)\subseteq V$, then
\begin{equation*}
 \frac{\abs{B(x,r)\cap \Elaw}}{\abs{B(x,r)}}\geq \frac{1}{16}\qandq \frac{\abs{B(x,r)\setminus \Elaw}}{\abs{B(x,r)}} \geq \frac{1}{16}.
\end{equation*}
\end{prop}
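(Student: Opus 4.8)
The plan is to reproduce the proof of Proposition~\ref{prop:weak_reg} almost line for line, the single change being that the global equiintegrability of $\{v_{\la,0}\}_{\la\in[0,1]}$ is replaced by its localised version on $V$, which in turn forces all the balls appearing in the argument to stay inside $V$. First I would note that $\la\mapsto v_{\la,0}$ is continuous on $(0,1]$ — it is the projection of $f/\la$ onto the fixed closed convex set $\partial J(0)$, and projections are $1$-Lipschitz — and that $v_{\la,0}\to\voo$ in $L^2(V)$ as $\la\to0^+$ by assumption; hence $\{v_{\la,0}|_V:\la\in(0,1]\}$ has compact closure in $L^2(V)$, and therefore $\{\abs{v_{\la,0}}^2|_V:\la\in(0,1]\}$ is uniformly integrable. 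I would then fix $r_0>0$ so small that $\left(\int_E\abs{v_{\la,0}}^2\right)^{1/2}\leq\sqrt{\cD}/4$ for all $\la\in(0,1]$ and all measurable $E\subseteq V$ with $\abs{E}\leq\pi r_0^2$.

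Next, fix $(\la,w)\in D_{1,\sqrt{\cD}/4}$, a set $\Elaw\in\FFlaw$ (I treat the case $\int_{\Elaw}\vlaw=P(\Elaw)$, the other being symmetric), and a point $x\in\partial\Elaw$ with $B(x,r_0)\subseteq V$. Since $\vlaw\in\partial J(0)$ we have $P(G)-\int_G\vlaw\geq 0 = P(\Elaw)-\int_{\Elaw}\vlaw$ for every finite-measure $G$, so $\Elaw$ minimises $G\mapsto P(G)-\int_G\vlaw$; comparing $\Elaw$ with $\Elaw\setminus B(x,r)$ exactly as in \eqref{eq:compare} and carrying out the same manipulations as in the proof of Proposition~\ref{prop:weak_reg}, one gets, for a.e.\ $r\in(0,r_0]$,
\begin{equation*}
P(\Elaw\cap B(x,r))-\int_{\Elaw\cap B(x,r)}\vlaw\leq 2\,\Hh^1(\partial B(x,r)\cap\Elaw).
\end{equation*}
Writing $\vlaw=(\vlaw-v_{\la,0})+v_{\la,0}$ and applying Cauchy--Schwarz together with $\normLdeux{\vlaw-v_{\la,0}}\leq\sqrt{\cD}/4$ and the equiintegrability bound above — legitimate because $\Elaw\cap B(x,r)\subseteq B(x,r_0)\subseteq V$ has measure at most $\pi r_0^2$ — followed by the isoperimetric inequality, this yields $\sqrt{\cD}\,\abs{\Elaw\cap B(x,r)}^{1/2}\leq 4\,\Hh^1(\partial B(x,r)\cap\Elaw)$.

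Finally, setting $g(r)=\abs{\Elaw\cap B(x,r)}$, I would use that $g(r)>0$ for $r>0$ (because $x$ lies on the topological boundary of the chosen Lebesgue representative of $\Elaw$) and $g'(r)=\Hh^1(\partial B(x,r)\cap\Elaw)$ for a.e.\ $r$, so that $\sqrt{\cD}\leq 8\,\frac{\mathrm{d}}{\mathrm{d}r}\sqrt{g(r)}$; integrating over $(0,r]$ gives $r\sqrt{\cD}\leq 8\sqrt{g(r)}$, i.e.\ $\abs{B(x,r)\cap\Elaw}\geq\abs{B(x,r)}/16$ for all $r\in(0,r_0]$, since $\cD=4\pi$. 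The complementary estimate $\abs{B(x,r)\setminus\Elaw}\geq\abs{B(x,r)}/16$ follows by the same computation, comparing $\Elaw$ with $\Elaw\cup B(x,r)$ instead. There is no genuine obstacle here; the only point requiring care is to keep every ball inside $V$, which is exactly what the hypothesis $B(x,r_0)\subseteq V$ provides, so that the localised equiintegrability can be invoked throughout.
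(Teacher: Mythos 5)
Your proof is correct and takes exactly the approach the paper intends: Proposition~\ref{prop:weak_reg_partial} is stated without proof precisely because, as the surrounding text indicates, ``one can argue along the lines of Proposition~\ref{prop:weak_reg}'', and your argument faithfully does so, replacing the global $L^2$-equiintegrability of $\{v_{\la,0}\}_{\la\in[0,1]}$ by the compactness of $\{v_{\la,0}|_V\}_{\la\in[0,1]}$ in $L^2(V)$ (obtained from continuity of $\la\mapsto v_{\la,0}$ on $(0,1]$ and the assumed $L^2(V)$-limit at $\la=0$) and then noting that the hypothesis $B(x,r_0)\subseteq V$ keeps the set $\Elaw\cap B(x,r)$ inside $V$ so the localised equiintegrability bound applies.
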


\begin{thm}\label{thm:union_cvx_sets}
  Let $\Omega$ be a union of convex sets which satisfies the assumptions of Section~\ref{sec:dual_cert_cvx_sets}. 
    Let $\{w_n\}_{n\in\NN}$, $\{\la_n\}_{n\in\NN}$ be sequences such that $w_n\in \LDD$, $\la_n\to 0^+$, and ${\normLdeux{w_n}}/{\la_n}\leq \sqrt{\cD}/4$. .
  Then, up to a subsequence, for a.e.\ $t\in \RR$,
  \begin{align}\label{eq:limtbis}
    &\lim_{n\to+\infty} \abs{\Un{t}\Delta \F{t}} =0,\qandq \\
    &\begin{cases}
   \lim_{n\to+\infty} \partial \Un{t}=\partial \F{t}, & \mbox{ if $0<t\leq 1$}\\
   \limsup_{n\to+\infty}  \partial \Un{t}\subseteq \partial \Omega & \mbox{otherwise,}\label{eq:limtter}
\end{cases}
\end{align}
where the last limits holds in the sense of Hausdorff convergence.

If additionally, ${\normLdeux{w_n}}/{\la_n}\to 0$ as $n\to +\infty$, the full sequence satisfies
\begin{equation}\label{eq:suppliminfsupsquare}
  \supp(Df)\subseteq \liminf_{n\to +\infty} \supp(D\un) \subseteq \limsup_{n\to+\infty} \supp(D\un)\subseteq \ext(Df).
\end{equation}

\end{thm}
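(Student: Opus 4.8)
The plan is to follow the proof of Theorem~\ref{thm:spt_stability}, replacing each tool that relied on the source condition by its $L^1$-analogue from this section: the compactness bounds of Proposition~\ref{prop:L1_props} in place of Lemmas~\ref{lem:unif_bd_lev_sets}--\ref{lem:largeball}, and the partial weak-regularity Proposition~\ref{prop:weak_reg_partial}, applied with $V\eqdef\RR^2\setminus\ext(Df)$ (on which $v_{\la,0}\equiv\voo$), in place of Proposition~\ref{prop:weak_reg}. First I would record the elementary convergences: from the optimality of $\un$ one gets $\normLdeux{\un-f}\to 0$ and $J(\un)\le J(f)+\normLdeux{w_n}^2/(2\la_n)\to J(f)$, hence $J(\un)\to J(f)$ by lower semicontinuity. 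By Proposition~\ref{prop:L1_props}(ii) all the sets $\Un{t}$ and $\F{t}$ sit in a fixed ball $B(0,R)$, so the $L^2$ convergence upgrades to $L^1$ and Fubini gives $\int_\RR\abs{\Un{t}\triangle\F{t}}\,\d t\to 0$; combining the coarea formula, $J(\un)\to J(f)$ and the lower semicontinuity of the perimeter, a Scheff\'e-type argument further gives $\int_\RR\abs{P(\Un{t})-P(\F{t})}\,\d t\to 0$. Thus, up to a subsequence, for a.e.\ $t$ we have simultaneously $\abs{\Un{t}\triangle\F{t}}\to 0$ and $P(\Un{t})\to P(\F{t})$. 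Since $f=\bun{\Omega}$, this means $\F{t}=\Omega$ for $t\in(0,1]$, while $\F{t}$ is trivial (so $\partial\F{t}=\emptyset$ and $P(\Un{t})\to 0$) for the other values of $t$.

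Fix such a $t$ and subsequence. The inclusion $\partial\F{t}\subseteq\liminf_n\partial\Un{t}$ follows verbatim from the argument of Proposition~\ref{prop:l2cv}, using Proposition~\ref{prop:L1_props}(i) to pass to the weak-$*$ limit of $D\bun{\Un{t}}$. For the reverse inclusion $\limsup_n\partial\Un{t}\subseteq\partial\Omega$, take $x_n\in\partial\Un{t}$, $x_n\to x$, and suppose $x\notin\partial\Omega$. If $B(x,r_0)\subseteq V$ for some $r_0>0$ (which holds when $x\notin\overline\Omega$ or $x\in\mathrm{int}\,C^{(j)}_{R_j}$), then $\norm{v_{\la_n,w_n}-\voo}_{L^2(V)}\le\normLdeux{w_n}/\la_n\le\sqrt{\cD}/4$ for $n$ large, so Proposition~\ref{prop:weak_reg_partial} applies at $x_n$, and passing to the limit by dominated convergence yields $x\in\partial\F{t}$, which is impossible for $t\notin(0,1]$ and contradicts $x\notin\partial\Omega$ for $t\in(0,1]$. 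The remaining case is $x\in\ext(Df)\setminus\partial\Omega$, i.e.\ $x\in C^{(j)}\setminus\overline{C^{(j)}_{R_j}}$ for some $j$, where the weak regularity is unavailable; here I would invoke the Jordan-curve decomposition of $\partial^M\Un{t}$ (Section~\ref{sec:jordan_decomp}): letting $J_n$ be the component through $x_n$, Remark~\ref{rem:decomp} gives $(\interop J_n)\in\FFn$. When $t\notin(0,1]$, $P(\Un{t})\to 0$ forces $\mathrm{diam}(\interop J_n)\le\frac{1}{2}P(\Un{t})\to 0$, hence $\abs{\interop J_n}\to 0$; but for $n$ large $C^{(j)}_{\la_n}$ contains a fixed neighbourhood of $x$ on which $v_{\la_n,0}=v_{C^{(j)}}$ is continuous, hence bounded there by $c_x+\varepsilon$ with $c_x\eqdef v_{C^{(j)}}(x)<+\infty$, so the optimality relation $P(\interop J_n)=\pm\int_{\interop J_n}v_{\la_n,w_n}$ together with this bound and the isoperimetric inequality $P(\interop J_n)\ge\sqrt{\cD}\abs{\interop J_n}^{1/2}$ forces $\abs{\interop J_n}$ to be bounded below by a positive constant --- a contradiction. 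For $t\in(0,1]$ the set $\Un{t}$ no longer shrinks, but the same analysis applied to the component accumulating at the interior point $x$, which must then enclose a region of vanishing measure since $\abs{\Un{t}\triangle\Omega}\to 0$, yields the same contradiction. Hence $x\in\partial\Omega$, which together with the first inclusion (and, for $t\in(0,1]$, with $\partial\Omega=\partial\F{t}\subseteq\liminf_n\partial\Un{t}$) proves \eqref{eq:limtbis}--\eqref{eq:limtter}, recalling that on the fixed ball $B(0,R)$ Painlev\'e--Kuratowski convergence coincides with Hausdorff convergence.

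For \eqref{eq:suppliminfsupsquare}, assume now $\normLdeux{w_n}/\la_n\to 0$. The inclusion $\supp(Df)\subseteq\liminf_n\supp(D\un)$ is Proposition~\ref{prop:l2cv}. For $\limsup_n\supp(D\un)\subseteq\ext(Df)$, pick $x_n\in\supp(D\un)$; by Proposition~\ref{prop:prelim-suppdf} we may assume (after replacing $x_n$ by a nearby point) that $x_n\in\partial E_n$ with $E_n\in\FFn$, and, passing to a subsequence, that $x_n\to x$ and all $E_n$ have the same sign, say $\int_{E_n}v_{\la_n,w_n}=P(E_n)$. By Proposition~\ref{prop:L1_props} the family $\{E_n\}$ is bounded with uniformly bounded perimeter, hence relatively compact in $L^1$: $\bun{E_n}\to\bun E$ in $L^1$ and a.e.\ for some $E$ with $\abs E<+\infty$. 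Writing $v_{\la_n,w_n}=\voo+(v_{\la_n,0}-\voo)+(v_{\la_n,w_n}-v_{\la_n,0})$, the second term contributes at most $\normLun{v_{\la_n,0}-\voo}\to 0$, the third at most $(\normLdeux{w_n}/\la_n)\abs{B(0,R)}^{1/2}\to 0$, and $\int_{E_n}\voo\to\int_E\voo$ by dominated convergence since $\voo\in L^1$; hence $P(E)\le\liminf_n P(E_n)=\int_E\voo$. The reverse inequality $\int_E\voo\le P(E)$ follows from $v_{\la,0}\in\partial J(0)$ and $v_{\la,0}\uparrow\voo$ as $\la\downarrow 0$ by monotone convergence, so $P(E)=\int_E\voo$, which by \eqref{eq:ext_spt_cvx_sets} gives $\partial^*E\subseteq\ext(Df)$ and hence $\partial E\subseteq\ext(Df)$. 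Finally, if $x\notin\ext(Df)$ then $B(x,r_0)\subseteq V$ and Proposition~\ref{prop:weak_reg_partial} applied to $E_n$ at $x_n$, passed to the limit, forces $x\in\partial E\subseteq\ext(Df)$, a contradiction; thus $x\in\ext(Df)$, which completes the argument.

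The step I expect to be the main obstacle is the ``annular'' case $x\in\ext(Df)\setminus\partial\Omega$ in the second paragraph: there the certificates $v_{\la,0}$ converge to $\voo$ only in $L^1$ (and in $L^2$ only away from $\ext(Df)$), so the uniform weak-regularity estimate cannot be invoked, and one must rely instead on the Jordan-curve decomposition together with the explicit structure of $v_{C^{(j)}}$ (its continuity and boundedness away from $\partial C^{(j)}$ and the monotonicity $C^{(j)}_\la\uparrow C^{(j)}$). Making this argument work uniformly for the level sets at heights $t\in(0,1]$, where the level set does not collapse to a point, is the most delicate part of the bookkeeping.
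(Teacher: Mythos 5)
Your overall architecture matches the paper's, and the final paragraph (the $L^1$-compactness argument for $\limsup_n\supp(D\un)\subseteq\ext(Df)$) is essentially the paper's argument. However, there is a genuine gap in your second paragraph, and it stems from an unnecessarily restrictive choice of the set $V$.

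You fix $V=\RR^2\setminus\ext(Df)$ once and for all, which forces you into a separate ``annular'' argument for $x\in\ext(Df)\setminus\partial\Omega$. But the hypothesis of Proposition~\ref{prop:weak_reg_partial} is $\lim_{\la\to 0}\normLdeux[L^2(V)]{v_{\la,0}-\voo}=0$, and this holds for the much larger set $V=\enscond{y}{\dist(y,\partial\Omega)>r_1}$: indeed $v_{\la,0}-\voo$ is supported in $\bigcup_j\bigl(C^{(j)}\setminus C^{(j)}_\la\bigr)\subseteq\enscond{y}{\dist(y,\partial\Omega)\leq\la}$, so for $\la<r_1$ one has $v_{\la,0}\equiv\voo$ on $V$. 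The paper chooses $V$ this way (with $r_1<\dist(x,\partial\Omega)$ depending on the contradiction point $x$), which means Proposition~\ref{prop:weak_reg_partial} applies to \emph{every} $x\notin\partial\Omega$, including those lying in $\ext(Df)$. There is then no annular case at all: the density estimates at $x_{n'}$ pass to the limit and give $x\in\partial\F{t}\subseteq\partial\Omega$, contradiction.

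Your replacement argument for the annular case does not close the gap for $t\in(0,1]$. You assert that the Jordan component $J_n$ of $\partial^M\Un{t}$ through $x_n$ ``must enclose a region of vanishing measure since $\abs{\Un{t}\Delta\Omega}\to 0$''. This is not justified: $J_n$ could be the large outer curve of $\Un{t}$ approximating $\partial\Omega$, which happens to develop a thin tentacle reaching into the interior near $x$; then $\abs{\interop J_n}$ stays bounded away from zero and your isoperimetric contradiction never materializes. It is precisely the density (weak-regularity) estimate that excludes such tentacles, and that estimate \emph{is} available at $x$ via the larger $V$ above. (Your treatment of $t\notin(0,1]$ via $P(\Un{t})\to 0$ and $\mathrm{diam}(\interop J_n)\leq\tfrac12 P(\Un{t})$ does work, but it is superfluous once one uses the right $V$.) The auxiliary Scheff\'e step establishing $\int_{\RR}\abs{P(\Un{t})-P(\F{t})}\,\d t\to 0$ is also not needed by the paper's route.
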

\begin{proof}
  
  By Proposition~\ref{prop:L1_props} (ii), the level sets $\Un{t}$ are included in some ball $B(0,R)$. So, by the same argument as in Theorem~\ref{thm:spt_stability}, $\partial \F{t}\subseteq\liminf_{n'\to+\infty} \partial\Unp{t}$ and~\eqref{eq:limtbis} holds.

  Now, we prove $\limsup_{n'\to+\infty} \partial\Unp{t}\subseteq \partial \Omega$. Let $(x_{n'})_{n'\in\NN}$ be a sequence in $\RR^2$ such that (up to an additional extraction) $x_{n'}\to x\in \RR^2$, and we assume by contradiction that $x\notin  \partial \Omega$. We let 
  \begin{equation*}
    V\eqdef \enscond{y\in \RR^2}{\dist(y,\partial \Omega)>r_1},
  \end{equation*}
  where $r_1>0$ is such that $r_1<\dist(x,\partial \Omega)$. We observe that $V$ is open, $x\in V$ and $\lim_{n'\to+\infty} \norm{v_{n'}-\voo}_{L^2(V)} = 0$.

  Applying Proposition~\ref{prop:weak_reg_partial}, we obtain for $n'$ large enough and $r>0$ small enough, 
\begin{equation*}
  \abs{B(x_{n'},r)\cap \Unp{t}}\geq \frac{1}{16}\abs{B(x_{n'},r)},\qandq \abs{B(x_{n'},r)\setminus \Unp{t}} \geq \frac{1}{16}\abs{B(x_{n'},r)},
\end{equation*}
and we conclude, passing to the limit as in the proof of Theorem~\ref{thm:spt_stability} that $x\in \partial\F{t}\subseteq \partial \Omega$, which contradicts the hypothesis. Hence $x\in\partial\Omega$, and $\limsup_{n'\to+\infty} \partial\Unp{t}\subseteq \partial \Omega$. Equation~\eqref{eq:limtter} follows since $\partial\F{t}=\partial \Omega$ for $0<t\leq 1$.

It remains to prove $ \limsup_{n\to+\infty} \supp(D\un)\subseteq \ext(Df)$. The proof is quite similar to the proof presented for Theorem \ref{thm:spt_stability} and we merely sketch it for brevity.
We let $x_{n}\to x\in \RR^2$, where $x_n\in \partial E_n$ for some $E_n\in \FFn$ and we assume by contradiction that $x\notin \ext(Df)$. Arguing as  above, and using the compactness property provided by Proposition~\ref{prop:L1_props}, we see that $x\in \partial E$ where $E$ is the limit of $E_n$ (up to an extraction, for the $L^1$ topology).

To conclude, we need to prove that $E\in \FFoo$, as in~\eqref{lsc_perim}. The $L^2$ convergence of 
$v_{\la_n,w_n}$ towards $v_{0,0}$ was applied to prove equation (\ref{lsc_perim}), but in fact, $L^1$ convergence $v_{\la_n,0}$ is sufficient:
Note that by (i) of Proposition \ref{prop:L1_props}  and the isoperimetric inequality, there exists $C$ such that $\abs{E_n}\leq C^2$ for all $n$. Then by letting $E_n\triangle E = (E_n\setminus E) \cup (E\setminus E_n)$,
\begin{align*}
\abs{\int_{E_n} v_{\la_n,w_n} -\int_E \voo}&\leq \abs{ \int_{E_n} v_{\la_n,w_n} - \int_{E_n} \voo} +\abs{ \int_{E_n} \voo - \int_E \voo }\\
&\leq \norm{v_{\la_n,0}  - \voo}_{L^1} +\frac{C \normLdeux{w_n}}{\la_n} + \int_{E_n\triangle E}\voo \to 0,
\end{align*}
by the $L^1$ convergence of $(v_{\la,0})$ and the absolute continuity of the Lebesgue integral.
As a result,  $E\in \FFoo$, hence $x\in \partial E\subseteq \ext(Df)$, and $\limsup_{n\to+\infty} \supp(D\un)\subseteq \ext(Df)$.
\end{proof}


\section{Support stability and calibrations}\label{sec:stab_grad}
Theorem \ref{thm:spt_stability} shows that as a result of the strong $L^2$ convergence of $\vlaw$ to $\voo$, one is guaranteed support stability outside a small neighbourhood of the extended support. This section upper bounds the rate of convergence in the outer limit inclusion of \eqref{eq:suppliminfsup}. In particular, we make explicit the relationship between the width of this neighbourhood, the decay of $\normLdeux{\vlaw-\voo}$ and the nondegeneracy of $z_0$, the vector field for which $\voo = -\divx z_0$. .

\subsection{Support stability}
In this section, we define 
\begin{equation*}
T_r \eqdef  \enscond{x\in \RR^2}{\mathrm{dist}(x,\ext(Df))\leq r },
\end{equation*}
we make an additional assumption about the decay of $z_0$ away from the extended support: 
\begin{equation}
  \deltar\eqdef 1-\ess \sup_{x\in T_r^C} \abs{z_0(x)}>0.\label{eq:defer}
\end{equation}
We also let $C$ be such that
$$
\abs{E}\leq C^2, \qquad \forall~E\in\FFlaw, ~ (\la,w)\in D_{1,\sqrt{\cD}/4}.
$$
Recall that the existence of $ C$ is guaranteed by Lemma \ref{lem:unif_bd_lev_sets}.
Examples of  vector fields whose decay is known outside the extended support are described in Section~\ref{sec:calibrable}.

\begin{prop}\label{prop:partoutsidetube}
 Given any $E\in \FFlaw$ with $(\la,w)\in D_{1,\sqrt{\cD}/4}$,
\begin{equation*}
  \deltar \HDU{\partial^*E\setminus T_r}\leq \int_{E}(v_{\la,w} - \voo)\leq C\normLdeux{v_{\la,w}-\voo},
\end{equation*}
\end{prop}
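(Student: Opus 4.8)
The plan is to treat the case $\int_E v_{\la,w}=P(E)$ in detail, the case $-\int_E v_{\la,w}=P(E)$ being entirely analogous (replace $z_0$ by $-z_0$ throughout). The idea is to rewrite $P(E)-\int_E\voo$ by means of the Anzellotti pairing of $z_0$ with $D\bun E$ recalled in Section~\ref{sec-subdiff}, and then to exploit the decay condition \eqref{eq:defer} on the portion of $\partial^*E$ lying outside $T_r$.

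First I would set up the pairing. Since $\voo\in\partial J(f)\subseteq\partial J(0)\subseteq\LDD$ with $\voo=-\divx z_0$ and $\normi{z_0}\le 1$, the field $z_0$ belongs to $\XD$; as $\bun E\in\LDD$ with $J(\bun E)=P(E)<+\infty$, the measure $\mu\eqdef(z_0,-D\bun E)$ is a well-defined finite Radon measure, absolutely continuous with respect to $\abs{D\bun E}=\Hh^1\llcorner\partial^*E$ and satisfying $\abs{\mu}\le\Hh^1\llcorner\partial^*E$. The set $T_r$ being closed, $U\eqdef\RR^2\setminus T_r$ is open, and $\norm{z_0}_{L^\infty(U)}\le 1-\deltar$ by \eqref{eq:defer}; applying the localized estimate $\abs{\langle(z_0,-D\bun E),\varphi\rangle}\le\normi{\varphi}\,\norm{z_0}_{L^\infty(U)}\int_U\abs{D\bun E}$ (valid whenever $\supp\varphi\subset U$) and taking the supremum over $\varphi\in\Cder{1}_c(\RR^2)$ with $\supp\varphi\subset U$ and $\normi{\varphi}\le1$ yields $\abs{\mu}(U)\le(1-\deltar)\,\HDU{\partial^*E\setminus T_r}$.

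Next I would use the integration by parts formula $\int_E\divx z_0=-\int_{\RR^2}(z_0,D\bun E)=\mu(\RR^2)$, so that $\int_E\voo=-\mu(\RR^2)$. Since $\int_E v_{\la,w}=P(E)=\HDU{\partial^*E}$, this gives
\[
\int_E(v_{\la,w}-\voo)=\HDU{\partial^*E}+\mu(\RR^2)=\HDU{\partial^*E}+\mu(T_r)+\mu(U).
\]
Bounding $\mu(T_r)\ge-\abs{\mu}(T_r)\ge-\HDU{\partial^*E\cap T_r}$ and $\mu(U)\ge-\abs{\mu}(U)\ge-(1-\deltar)\HDU{\partial^*E\setminus T_r}$, and using $\HDU{\partial^*E}=\HDU{\partial^*E\cap T_r}+\HDU{\partial^*E\setminus T_r}$, the right-hand side is at least $\deltar\,\HDU{\partial^*E\setminus T_r}$, which is the first inequality. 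For the second, Cauchy--Schwarz and the uniform bound $\abs{E}\le C^2$ give $\int_E(v_{\la,w}-\voo)\le\abs{E}^{1/2}\normLdeux{v_{\la,w}-\voo}\le C\normLdeux{v_{\la,w}-\voo}$.

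The one delicate point is the second step: passing from the global bound $\normi{z_0}\le1$ and the decay hypothesis \eqref{eq:defer} to the measure estimate $\abs{\mu}(U)\le(1-\deltar)\HDU{\partial^*E\setminus T_r}$, which relies on the localized Anzellotti estimate being applied on the open set $\RR^2\setminus T_r$ together with the fact that $\mu$ is carried by $\partial^*E$ and dominated by $\Hh^1\llcorner\partial^*E$. Everything else is routine bookkeeping; in particular the integrability required for the pairing to make sense is ensured by $\voo\in\LDD$ and $\bun E\in\LDD$ with $P(E)<+\infty$.
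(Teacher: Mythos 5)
Your proof is correct and follows essentially the same route as the paper: split the boundary contribution of $z_0$ into the part inside $T_r$ (bounded by $\normi{z_0}\le 1$) and the part outside (bounded by $1-\deltar$ via \eqref{eq:defer}), then use Cauchy--Schwarz together with the uniform bound $\abs{E}\le C^2$ for the upper estimate. The only difference is presentational: you work with the Anzellotti pairing as a genuine Radon measure $\mu=(z_0,-D\bun E)$ and its localized total-variation estimate on the open set $\RR^2\setminus T_r$, whereas the paper writes the same quantity pointwise as $\int_{\partial^*E}z_0\cdot\nu$; your formulation is a touch more careful but the argument is the same.
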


\begin{proof}
Comparing the energy of $E$ with that of the empty set we get,
\begin{align*}
  P(E)\leq \int_{E}v_{\la,w} &= \int_{E}(v_{\la,w}-\voo) + \int_{E}\voo
  \\
                                 &\leq \normLdeux{v_{\la,w}-\voo}\sqrt{\abs{E}} + \int_{\partial^* E} z_0\cdot \nu                               .
\end{align*}
Recall from  Lemma \ref{lem:unif_bd_lev_sets} that  there exists $C>0$ such that $\abs{E}\leq C^2$. So, $$ P(E)- \int_{\partial^* E\cap T_r}z_0\cdot \nu - \int_{\partial^* E\setminus T_r}z_0\cdot \nu \leq C \normLdeux{v_{\la,w}-\voo}.$$
Since $\left(\ess \sup_{\RR^N\setminus T_r}\abs{z_0}\right)\leq 1-\deltar$, and more generally $\normLinf{z_0}\leq 1$, 
\begin{align*}
  P(E)- \int_{\partial^* E\cap T_r}z_0\cdot \nu - \int_{\partial^* E\setminus T_r}z_0\cdot \nu  &\geq \HDU{\partial^*E\setminus T_r}+\HDU{\partial^*E\cap T_r}\\
                                                                                                            &- (1-\deltar)\HDU{\partial^*E\setminus T_r} - \HDU{\partial^*E\cap T_r}\\
                                                                                                            &\geq  \deltar \left(\HDU{\partial^*E\setminus T_r}\right),
\end{align*}
hence the claimed result.

\end{proof}

\begin{prop}\label{prop:entiretubecomplement}
Let $\la>0$ and $w\in \LDD$ be such that $\normLdeux{v_{\la,w}-v_0} <\deltard\sqrt{\cD}$. Then, $E\in \FFlaw$ and $P(E)>0$ implies that
  \begin{equation*}
    \HDU{\partial^* E \cap T_{r/2}}>0.
  \end{equation*}
\end{prop}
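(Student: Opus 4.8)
The plan is to argue by contradiction: suppose $E \in \FFlaw$ with $P(E) > 0$ but $\HDU{\partial^* E \cap T_{r/2}} = 0$. Then $\partial^* E$ lies entirely (up to $\Hh^1$-negligible sets) in $T_{r/2}^C \subseteq T_r^C$, so by Proposition \ref{prop:partoutsidetube} we get $\deltar P(E) = \deltar \HDU{\partial^* E \setminus T_r} \leq C \normLdeux{v_{\la,w} - \voo}$. But this only controls $P(E)$ from above by the noise, which is not yet a contradiction; the point is that $P(E)$ is also bounded \emph{below} (Lemma \ref{lem:lower_bound_lev_sets}, or the isoperimetric inequality combined with a lower bound on $\abs{E}$). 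So first I would need a lower bound on $P(E)$ that is large compared to $C\normLdeux{v_{\la,w}-\voo}/\deltar$. The cleanest route is: since $\partial^* E$ avoids $T_{r/2}$, the set $E$ is ``locally a half-space perturbation only far from $\ext(Df)$'', and in particular $E$ must contain or be disjoint from small balls centered at points of $\ext(Df)$; combining with the upper perimeter bound forces $\abs{E}$ to be either $0$ or bounded below, hence $P(E) \geq c_0 > 0$ for some absolute $c_0$.

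More concretely, the key step is to produce an \emph{explicit} lower bound of the form $P(E) \geq \deltard \sqrt{\cD} \cdot \sqrt{\abs{E}} / (\text{something})$ or simply $P(E) \geq c$, and then compare. Actually the hypothesis $\normLdeux{v_{\la,w} - v_0} < \deltard\sqrt{\cD}$ suggests the intended bound is via isoperimetry on $E$ itself. Here is the argument I would carry out. By Proposition \ref{prop:partoutsidetube} with the assumption $\HDU{\partial^* E \cap T_{r/2}} = 0$ (so $\partial^* E \setminus T_r = \partial^* E \setminus T_{r/2}$ up to null sets, i.e.\ $\HDU{\partial^* E \setminus T_r} = P(E)$),
\begin{equation*}
  \deltar P(E) \leq \int_E (v_{\la,w} - \voo) \leq \normLdeux{v_{\la,w} - \voo}\sqrt{\abs{E}} < \deltard\sqrt{\cD}\,\sqrt{\abs{E}}.
\end{equation*}
On the other hand, the isoperimetric inequality gives $\sqrt{\cD}\sqrt{\abs{E}} \leq P(E)$ (using $\abs{\RR^2 \setminus E} = +\infty$ so $\min\{\abs{E}, \abs{E^c}\} = \abs{E}$), hence $\deltard\sqrt{\cD}\sqrt{\abs{E}} \leq \deltard P(E) < \deltar P(E)$ — wait, that goes the wrong way. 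Let me instead combine: from the displayed chain, $\deltar P(E) < \deltard \sqrt{\cD}\sqrt{\abs{E}}$, and from isoperimetry $\sqrt{\cD}\sqrt{\abs{E}} \leq P(E)$, so $\deltar P(E) < \deltard P(E)$, i.e.\ $\deltar < \deltard$ — still wrong direction since $\deltard < \deltar$. So this precise combination forces $P(E) = 0$, the contradiction. Let me re-examine: we want $\HDU{\partial^* E \cap T_{r/2}} > 0$, and the factor $1/2$ in $T_{r/2}$ versus $T_r$ in \eqref{eq:defer}/Proposition \ref{prop:partoutsidetube} is what creates the slack.

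So the corrected plan: assume for contradiction $\HDU{\partial^* E \cap T_{r/2}} = 0$. Then $\partial^* E \subseteq T_{r/2}^C$ up to an $\Hh^1$-null set, hence $\partial^* E \setminus T_r \supseteq \partial^* E \setminus T_{r/2}$ has full measure: $\HDU{\partial^* E \setminus T_r} = P(E)$ (note $T_r \subseteq T_{r/2}^C{}^c$... actually $T_{r/2} \subseteq T_r$, so $T_r^C \subseteq T_{r/2}^C$, and $\partial^*E$ avoiding $T_{r/2}$ means it avoids nothing extra about $T_r$; the correct reading is $\partial^* E \cap T_{r/2} = \emptyset$ $\Hh^1$-a.e.\ does NOT imply $\partial^* E \cap T_r = \emptyset$). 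Hmm — so actually the relevant inequality must use $T_{r/2}$ in place of $T_r$ in Proposition \ref{prop:partoutsidetube}, i.e.\ one re-runs that proposition's proof with $\delta_{r/2} \eqdef 1 - \ess\sup_{T_{r/2}^C}\abs{z_0} \geq \deltar$ (since $T_{r/2}^C \subseteq T_r^C$... no, $T_{r/2} \subseteq T_r$ so $T_r^C \subseteq T_{r/2}^C$, giving $\delta_{r/2} \leq \deltar$). The main obstacle, then, is bookkeeping the monotonicity of $r \mapsto \deltar$ and $r \mapsto T_r$ correctly and identifying exactly which tube radius appears where; once that is pinned down, the contradiction is: $\partial^* E$ entirely outside $T_{r/2}$ $\Rightarrow$ (re-run Prop.\ \ref{prop:partoutsidetube} with radius $r/2$) $\delta_{r/2} P(E) \leq \normLdeux{v_{\la,w}-\voo}\sqrt{\abs{E}}$; but the hypothesis $\normLdeux{v_{\la,w}-v_0} < \delta_{r/2}\sqrt{\cD}$ (with $\deltard$ denoting $\delta_{r/2}$) combined with isoperimetry $\sqrt{\cD}\sqrt{\abs{E}} \leq P(E)$ forces $\delta_{r/2} P(E) < \delta_{r/2} P(E)$, a contradiction unless $P(E) = 0$. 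I would write it out in this order: (1) restate Prop.\ \ref{prop:partoutsidetube} for the radius $r/2$; (2) feed in the contradiction hypothesis; (3) apply isoperimetry; (4) invoke the quantitative bound on $\normLdeux{v_{\la,w}-v_0}$ to close.
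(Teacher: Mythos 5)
Your final plan (steps (1)--(4) at the end) is correct and is essentially the paper's argument: the paper directly redoes the computation of Proposition~\ref{prop:partoutsidetube} with $T_{r/2}$ in place of $T_r$, using $\abs{z_0}\leq 1-\delta_{r/2}$ on $T_{r/2}^c$ and keeping the Cauchy--Schwarz term as $\normLdeux{\vlaw-\voo}\sqrt{\abs{E}}$, then converts $\sqrt{\abs{E}}\leq P(E)/\sqrt{\cD}$ by isoperimetry and divides by $P(E)>0$ to reach $\delta_{r/2}\sqrt{\cD}\leq\normLdeux{\vlaw-\voo}$, contradicting the hypothesis. Your earlier missteps — the reversed inclusion $T_{r/2}^c\subseteq T_r^c$ and the attempt to invoke Proposition~\ref{prop:partoutsidetube} at radius $r$ rather than $r/2$ — were caught and corrected in the text, so the final version is sound.
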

\begin{proof}
Assume by contradiction that $P(E)>0$ and $\HDU{\partial^* E\cap T_{r/2}}=0$ so that $\partial^* E \subset T_{r/2}^c$ up to an $\Hh^1$-negligible set. Then,
\begin{align*}
P(E) &= \int_E \vlaw \leq \norm{E}^{1/2} \normLdeux{\vlaw - \voo} + \int_E \voo\\
&\leq \frac{P(E) \normLdeux{\vlaw - \voo} }{\sqrt{\cD}}+ (1-\delta_{r/2}) P(E)
\end{align*}
where we have applied the isoperimetric inequality and the fact that $\voo = \divx z_0$ with $\abs{z_0}\leq (1-\delta_{r/2})$ on $T_{r/2}^c$. Since $P(E)>0$, this implies that
$$
\delta_{r/2}\sqrt{\cD} \leq \normLdeux{\vlaw - \voo}.$$
This contradicts the assumption of this proposition.
\end{proof}

\begin{thm}\label{thm:stab_w_vec_field}
Let $r>0$.
 If $(\la,w)\in D_{1,\sqrt{\cD}/4}$ are such that
  \begin{equation}\label{eq:thm_vec_conds}
  \norm{v_{\la,w} - v_0}_{L^2}\leq \delta_{r/2} \min \left\{\frac{r}{2C}, \sqrt{\cD}\right\},
  \end{equation}
  then for all level sets $E$ of $u_{\la,w}$, 
\begin{equation*}
  \partial E \subseteq T_r.
\end{equation*}

\end{thm}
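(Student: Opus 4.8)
The plan is to reduce the assertion to a statement about each Jordan curve making up the boundary of a level set, and then to combine Proposition~\ref{prop:partoutsidetube} and Proposition~\ref{prop:entiretubecomplement} with a short length estimate. First I would fix $(\la,w)$ satisfying~\eqref{eq:thm_vec_conds}; this yields both $\normLdeux{v_{\la,w}-v_0}\leq \delta_{r/2}\tfrac{r}{2C}$ and $\normLdeux{v_{\la,w}-v_0}\leq \delta_{r/2}\sqrt{\cD}$, so that Proposition~\ref{prop:partoutsidetube} (applied with the radius $r/2$ in place of $r$) and Proposition~\ref{prop:entiretubecomplement} are both available. Let $E$ be a level set of $u_{\la,w}$. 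It is enough to handle $E=\Un{t}$ with $t\neq 0$: arguing as in the proof of Proposition~\ref{prop:prelim-suppdf} one has $\partial \Un{0}\subseteq \overline{\bigcup_{t\neq 0}\partial^*\Un{t}}$, so once every $\partial\Un{t}$ ($t\neq 0$) is shown to lie in the \emph{closed} set $T_r$, the same follows for $\partial\Un{0}$. For $t\neq 0$ one has $\abs{\Un{t}}<+\infty$ and, since $v_{\la,w}\in\partial J(u_{\la,w})$, Proposition~\ref{prop:prelim-subdiff} gives $\Un{t}\in\FFlaw$.

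Next I would invoke the Jordan-curve decomposition of Section~\ref{sec:jordan_decomp}. Since $E\in\FFlaw$ and $(\la,w)\in D_{1,\sqrt{\cD}/4}$, Lemma~\ref{lem:lower_bound_lev_sets} ensures that, up to an $\Hh^1$-negligible set, $\partial^M E$ is a \emph{finite} union of rectifiable Jordan curves $J_1,\dots,J_m$; by Remark~\ref{rem:decomp} (as used in the proof of Lemma~\ref{lem:lower_bound_lev_sets}) each $\interop(J_k)$ again lies in $\FFlaw$, with $P(\interop J_k)=\HDU{J_k}>0$, and $\partial^*(\interop J_k)=J_k$ up to an $\Hh^1$-null set. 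Because the $J_k$ are closed and finitely many, every $x\in\partial E=\supp\abs{D\bun{E}}$ lies on some $J_k$: each ball about $x$ meets $\partial^*E\subseteq\bigcup_k J_k$ mod $\Hh^1$, hence meets one fixed $J_k$ along a sequence of radii tending to $0$, so $x\in J_k$. It therefore suffices to prove $J_k\subseteq T_r$ for each $k$.

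Fix such a Jordan curve $J$. Proposition~\ref{prop:partoutsidetube} applied to $\interop(J)$ with radius $r/2$ gives $\delta_{r/2}\HDU{J\setminus T_{r/2}}\leq C\normLdeux{v_{\la,w}-v_0}\leq \delta_{r/2}\tfrac{r}{2}$, hence $\HDU{J\setminus T_{r/2}}\leq \tfrac r2$; and Proposition~\ref{prop:entiretubecomplement} applied to $\interop(J)$ (which has positive perimeter) gives $\HDU{J\cap T_{r/2}}>0$, so $J$ contains a point $y$ with $\dist(y,\ext(Df))\leq \tfrac r2$. Suppose, for a contradiction, that some $x\in J$ has $\dist(x,\ext(Df))>r$ (so $x\neq y$). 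Parametrize one of the two sub-arcs of $J$ joining $y$ to $x$ by arc length, starting at $y$, and let $a$ be the \emph{last} point of this arc with $\dist(a,\ext(Df))=\tfrac r2$ — it exists by continuity, since $\dist(\cdot,\ext(Df))$ equals $\tfrac r2$ at some point of the initial region and exceeds $r$ at $x$. By the choice of $a$, the sub-arc $\gamma$ from $a$ to $x$ satisfies $\gamma\setminus\{a\}\subseteq \RR^2\setminus T_{r/2}$; since $\gamma$ is a simple arc, $\HDU{\gamma}$ equals its length, and since $\dist(\cdot,\ext(Df))$ is $1$-Lipschitz this length is at least $\dist(x,\ext(Df))-\dist(a,\ext(Df))>r-\tfrac r2=\tfrac r2$. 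Thus $\HDU{J\setminus T_{r/2}}\geq\HDU{\gamma\setminus\{a\}}=\HDU{\gamma}>\tfrac r2$, contradicting $\HDU{J\setminus T_{r/2}}\leq\tfrac r2$. Hence $J\subseteq T_r$; taking the union over $k$ gives $\partial E\subseteq T_r$.

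The main obstacle — and the only step needing care — is this last length estimate. One must ensure that the arc $\gamma$ running from the level $\dist(\cdot,\ext(Df))=\tfrac r2$ out to a point at distance $>r$ stays, apart from its initial endpoint, \emph{entirely} outside $T_{r/2}$, so that its $\Hh^1$ measure is controlled by $\HDU{J\setminus T_{r/2}}$; this is exactly why $a$ is taken to be the last exit point, and why sub-arcs of a \emph{Jordan} curve are used (so that $\Hh^1(\gamma)=\mathrm{length}(\gamma)$ rather than merely $\leq$). The strictness in the final inequality — hence the contradiction — is supplied by the strict hypothesis $\dist(x,\ext(Df))>r$ together with the factor $\tfrac12$ built into $T_{r/2}$ in~\eqref{eq:thm_vec_conds}. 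Everything else is bookkeeping with Propositions~\ref{prop:partoutsidetube} and~\ref{prop:entiretubecomplement} and the decomposition of Section~\ref{sec:jordan_decomp}.
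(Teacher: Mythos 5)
Your proof is correct and follows essentially the same route as the paper's: combine Proposition~\ref{prop:partoutsidetube} (applied at radius $r/2$) and Proposition~\ref{prop:entiretubecomplement} with the Jordan-curve decomposition of Section~\ref{sec:jordan_decomp} to derive a contradiction on the length $\HDU{J\setminus T_{r/2}}$. You supply details that the paper glosses over — the last-exit-point argument making the lower bound $\HDU{J\setminus T_{r/2}}>r/2$ rigorous (the paper simply invokes ``$J$ is a continuous curve''), the explicit treatment of the $t=0$ level set, and the identification of $\partial E$ with a finite union of Jordan curves via Lemma~\ref{lem:lower_bound_lev_sets}.
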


\begin{proof}
  It is sufficient to show that $\Hh^1(\partial^* E \setminus T_r) = 0$ for all $E\in\FFlaw$ with $\la>0$ and $w\in \LDD$ satisfying (\ref{eq:thm_vec_conds}). For, if we have $\Hh^1(\partial^* E \setminus T_r) = 0$, this means that $\bun{E}$ is constant on every connected component of the open set $\RR^2\setminus T_r$, hence the topological boundary satisfies $\partial E\subseteq T_r$.
Furthermore, by Section~\ref{sec:jordan_decomp}, we may assume that  up to an $\Hh^1$-negligible set, $\partial^* E$ is equivalent to a Jordan curve $J$.

First observe that by Proposition \ref{prop:entiretubecomplement}, $\Hh^1(\partial^* E\cap T_{r/2})>0$. Now, for a contradiction, suppose that $\Hh^1(\partial^* E\setminus T_{r})>0$. Then since this implies that $\Hh^1(J\cap T_{r/2})>0$, $\Hh^1(J\setminus T_r)>0$ and $J$ is a continuous curve, it follows that $$\Hh^1(J \setminus T_{r/2}) = \Hh^1(\partial^* E \setminus T_{r/2})> r/2.$$ However, this is a contradiction Proposition \ref{prop:partoutsidetube} implies that
$$\lim_{(\la_0,\alpha_0)\to (0,0)} \sup \enscond{\HDU{\partial^*\Elaws\setminus T_r}}{\Elaws \in \FFlaw,\ (\la,\alpha)\in \lnr{\alpha_0}{\la_0}}=0.$$
Indeed, by our choice of $(\la,w)$ in (\ref{eq:thm_vec_conds}), if $\Hh^1(\partial^* E\setminus T_r)>0$, then the combination of Proposition  \ref{prop:partoutsidetube} and Proposition \ref{prop:entiretubecomplement} yields
\begin{equation*}
\frac{r\delta_{r/2}}{2} < \HDU{\partial^*E\setminus T_{r/2}}\leq C\normLdeux{v_{\la,w}-\voo} \leq \frac{r \delta_{r/2}}{2}.
\end{equation*}

\end{proof}

\paragraph{Example}
In the case where $f = \bun{B(0,R)}$, by  the construction of $z_0$ from  (\ref{eq-calib-disc}), $\delta_r\leq r/R$.
Furthermore, since $\vlaw = \frac{2}{R}f$, for each $E\in \FFlaw$,
$$
2\sqrt{\pi}\abs{E}^{1/2}\leq P(E) \leq \int_E \vlaw
\leq 2\pi R + \norm{w}_{L^2}\abs{E}^{1/2},
$$
and $\abs{E}^{1/2}\leq 2\pi R/(2\sqrt{\pi} - \normLdeux{w})$ provided that $2\sqrt{\pi} > \normLdeux{w}$.
 So, Theorem \ref{thm:stab_w_vec_field} implies that for all $\la>0$, and $w\in\LDD$ such that $$\norm{w}_{L^2}\leq \min\left\{2\sqrt{\pi}-\pi,~\frac{\la  r^2}{8R^2}\right\},$$ any level set $E$ of $\ulaw$ satisfies $\partial^* E\subset T_{r}$ up to an $\Hh^1$-negligible set.

\subsection{Non-degeneracy of calibrable sets}\label{sec:calibrable}

The aim of this section is to show that if $C\subset\RR^2$ is a convex calibrable set, the minimal norm certificate $\voo=h_C\bun{C}$ (where $h_C=\frac{P(C)}{\abs{C}}$) can be written as $\voo=\divx z_0$ where $z_0\in \XDD$, $(z,D\bun{C})=-\abs{D\bun{C}}$ and for every compact set $K\subset \RR^2\setminus \partial C$,
\begin{equation*}
  \ess \sup_{K} \abs{z_0} <1,
\end{equation*}
with an estimation on that inequality. We do not aim at full generality, and we assume that $\partial C$ is of class $\Cder{2}$ for the sake of simplicity. Reducing the hypotheses is the subject of future work.

The proof relies on the notion of inner and outer calibrations described in~\cite{beltvflow02}, which amounts to constructing vector fields ``inside'' and ``outside'' the studied set, and then ``glue'' the two constructions.

\begin{defn}
Let $C \subseteq \RR^2$ be a set of finite perimeter. We say that $C$ is $-$calibrable if there exists a vector field $z^-_C: \RR^2\rightarrow \RR^2$ 
such that 
\begin{enumerate}
  \item $z^-_C\in L^2_{loc}(\RR^2,\RR^2)$ and $\divx z^-_C \in L^2_{loc}(\RR^2)$;
  \item $|z^-_{C}| \leq 1$ almost everywhere in $C$;
  \item $\divx z^-_C$ is constant on $C$;
  \item $\theta(z^-,-D\bun{C})(x)=-1$ for $\Hh^1$-almost every $x\in \partial^* C$.
\end{enumerate}
Similarly $C$ is $+$calibrable if $1),2),3)$ hold and $\theta(z^+,-D\bun{C})(x)=+1$ in $4)$.
\end{defn}
The following lemma tells that one may ``glue'' calibrations:
\begin{lem}[\protect{\cite{beltvflow02}}]
  Let  $C$ be a bounded set of finite perimeter. Then $v=\bun{C}$ is calibrable if and only if $C$ is $-$calibrable with $-\divx \xi^-_{C}=h_C$ in $C$, and $\RR^2\setminus C$ is $+$calibrable with $\divx z^+_{\RR^2}=0$ in $\RR^2\setminus C$,
  defining
  \begin{align*}
    z\eqdef \left\{ \begin{array}{cc}
        z^-_C & \mbox{on }  C,\\
        z^+_C & \mbox{on } \RR^2\setminus C.
    \end{array}\right.
  \end{align*}
\end{lem}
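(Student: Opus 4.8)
The plan is to prove the two implications of the equivalence separately; the substance is entirely in the reverse one, which carries out the ``gluing''.

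\emph{Reverse implication.} Assume $C$ is $-$calibrable via a field $z^-_C$ with $-\divx z^-_C=h_C$ on $C$ and $\theta(z^-_C,-D\bun{C})=-1$ on $\partial^*C$, and that $\RR^2\setminus C$ is $+$calibrable via a field $z^+$ with $\divx z^+=0$ on $\RR^2\setminus C$ and $\theta(z^+,-D\bun{\RR^2\setminus C})=+1$ on $\partial^*(\RR^2\setminus C)$. First I set $z\eqdef z^-_C$ on $C$ and $z\eqdef z^+$ on $\RR^2\setminus C$; since $\abs{z^-_C}\le1$ a.e.\ on $C$ and $\abs{z^+}\le1$ a.e.\ on $\RR^2\setminus C$, we have $z\in\Linf(\RR^2,\RR^2)$ with $\normi{z}\le1$. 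Next I compute $\divx z$ as a distribution: for $\varphi\in\Cder{1}_c(\RR^2)$ the fields $z^-_C\varphi$ and $z^+\varphi$ belong to $\XD$ (their divergences are compactly supported $L^2$ functions, so the mere $L^2_{loc}$ regularity of $\divx z^\pm$ is harmless), hence the Gauss--Green formula recalled in Section~\ref{sec-subdiff}, applied on $C$ and on $\RR^2\setminus C$, yields
\begin{align*}
  \int_{\RR^2} z\cdot\nabla\varphi &= -\int_C\varphi\,\divx z^-_C + \int_{\partial^*C}\varphi\,\theta(z^-_C,-D\bun{C})\,\d\Hh^1\\
  &\quad -\int_{\RR^2\setminus C}\varphi\,\divx z^+ + \int_{\partial^*(\RR^2\setminus C)}\varphi\,\theta(z^+,-D\bun{\RR^2\setminus C})\,\d\Hh^1.
\end{align*}
Using $\divx z^-_C=-h_C$ on $C$, $\divx z^+=0$ on $\RR^2\setminus C$, the identity $\partial^*(\RR^2\setminus C)=\partial^*C$ (up to $\Hh^1$-negligible sets), and the two trace conditions, the boundary contributions cancel and we are left with $\int_{\RR^2}z\cdot\nabla\varphi=h_C\int_C\varphi$; thus $-\divx z=h_C\bun{C}$, which is the required relation $-\divx z=\la_C\bun{C}$ with $\la_C=P(C)/\abs{C}=h_C$. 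In particular $\divx z\in\LDD$, so $z\in\XD$ and the Anzellotti pairing $(z,D\bun{C})$ is defined. Applying the Gauss--Green formula once more, now to the glued field $z$ on $C$, gives $\int_{\partial^*C}\theta(z,-D\bun{C})\,\d\Hh^1=\int_C\divx z=-h_C\abs{C}=-P(C)=-\Hh^1(\partial^*C)$; since $\abs{\theta(z,-D\bun{C})}\le\normi{z}\le1$ $\Hh^1$-a.e., this forces $\theta(z,-D\bun{C})=-1$ $\Hh^1$-a.e.\ on $\partial^*C$, equivalently $\theta(z,D\bun{C})=1$, so that $\int_{\RR^2}(z,D\bun{C})=\Hh^1(\partial^*C)=P(C)=\int_{\RR^2}\abs{D\bun{C}}$. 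Both calibration conditions now hold, hence $\bun{C}$ is calibrable, with $z$ as calibration.

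\emph{Forward implication.} Conversely, if $\bun{C}$ is calibrable via a calibration $z$ ($\normi{z}\le1$, $\int_{\RR^2}(z,D\bun{C})=P(C)$, $-\divx z=h_C\bun{C}$), I take $z^-_C\eqdef z$ and $z^+\eqdef z$. As above, $\int_{\RR^2}(z,D\bun{C})=P(C)=\Hh^1(\partial^*C)$ together with $\abs{\theta(z,D\bun{C})}\le1$ forces $\theta(z,D\bun{C})=1$ $\Hh^1$-a.e.\ on $\partial^*C$, i.e.\ $\theta(z,-D\bun{C})=-1$ and, since $-D\bun{\RR^2\setminus C}=D\bun{C}$ and $\partial^*(\RR^2\setminus C)=\partial^*C$ mod $\Hh^1$, also $\theta(z,-D\bun{\RR^2\setminus C})=+1$; moreover $\divx z=-h_C\bun{C}$ equals the constant $-h_C$ on $C$ and the constant $0$ on $\RR^2\setminus C$, is in $\LDD\subset L^2_{loc}$, and $\abs{z}\le1$ a.e. These are exactly the defining properties of $C$ being $-$calibrable with $-\divx z^-_C=h_C$ on $C$ and of $\RR^2\setminus C$ being $+$calibrable with $\divx z^+=0$ on $\RR^2\setminus C$, and the glued field is $z$ itself.

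\emph{Main obstacle.} The computations are routine once conventions are fixed; the only genuinely delicate point is the sign bookkeeping in Anzellotti's Gauss--Green identity — specifically, recognizing that condition~(4) for the $-$calibration of $C$ (``$\theta=-1$'' against $-D\bun{C}$) and condition~(4) for the $+$calibration of $\RR^2\setminus C$ (``$\theta=+1$'' against $-D\bun{\RR^2\setminus C}$) are precisely the statement that the interior and exterior normal traces of the glued field agree on $\partial^*C$, so that its distributional divergence carries no singular part supported on $\partial C$. The incidental technicalities — that $\divx z^\pm$ is only assumed $L^2_{loc}$ and that $\RR^2\setminus C$ has infinite measure — are handled by testing against compactly supported $\varphi$, which places $z^\pm\varphi$ in $\XD$.
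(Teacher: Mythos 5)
The paper does not prove this lemma; it is quoted directly from \cite{beltvflow02} and no proof is given in the text, so there is no ``paper's proof'' to compare against. Evaluating your argument on its own terms, it is essentially correct and carries out the natural gluing argument: you first show that the glued field has distributional divergence $-h_C\bun{C}$ by applying Anzellotti's Gauss--Green identity separately on $C$ and $\RR^2\setminus C$ and observing that the normal traces from the two sides coincide (both equal $-1$ against $-D\bun{C}$, recalling $-D\bun{\RR^2\setminus C}=D\bun{C}$), so the boundary terms cancel; you then deduce $z\in\XD$ and re-apply Gauss--Green to the glued field to force the extremality relation $\int(z,D\bun{C})=\int\abs{D\bun{C}}$ from the saturation of $\abs{\theta}\le1$. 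The forward direction by taking $z^\pm=z$ is also right.

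One point worth tightening: in the reverse direction you invoke Gauss--Green for $z^-_C\varphi$ and claim it lies in $\XD$, but the definition of $-$calibrability only guarantees $z^-_C\in L^2_{loc}$ with $\abs{z^-_C}\le 1$ a.e.\ \emph{in $C$}, not global boundedness, so $z^-_C\varphi$ need not be in $\Linf(\RR^2,\RR^2)$. The fix is harmless --- the Anzellotti pairing $(z^-_C,D\bun{C})$ and hence the trace $\theta(z^-_C,-D\bun{C})$ depend only on $z^-_C\bun{C}$ (the defining identity integrates against $\bun{C}\varphi$), so one may replace $z^-_C$ by $z^-_C\bun{C}$ (equivalently, by the glued field $z$ itself) before integrating by parts; the same remark applies to $z^+$. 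With that adjustment the proof is rigorous within the framework recalled in Section~\ref{sec-subdiff}.
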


\subsection{Inner calibrations}
Let $C\subset \RR^2$ be a bounded open convex set of class $\Cder{2}$ 
, and $h_C \eqdef \frac{P(C)}{C}$.

Following~\cite{AmaBel15} in order to build the calibration, we consider the following auxiliary problem:
\begin{equation}
  \divx\left(\frac{\nabla u}{\sqrt{1+\abs{\nabla u}^2}}\right) = h_C. \label{eq:pscbmean}
\end{equation}
and we define
\begin{equation}\label{eq:zinnercalib}
  z\eqdef\begin{cases}
\frac{\nabla u}{\sqrt{1+\abs{\nabla u}^2}} &\mbox{on $C$}\\
\nu_C & \mbox{on $\partial C$}\end{cases}
\end{equation}

Giusti proved the following result in~\cite{giusti78} (see also~\cite[Prop.6.2]{AmaBel15})
\begin{thm}[\cite{giusti78}]\label{thm:giustiexist} There exists a solution $u\in\Cder{2}(C)$ to~\eqref{eq:pscbmean} if and only if 
  \begin{equation}
    \forall B\subsetneq C, B\neq \emptyset, \quad h_C < \frac{P(B)}{\abs{B}}. \label{eq:uniqcheeg}
  \end{equation}
That solution $u$ is unique up to an additive constant, bounded from below in $C$, and its graph is vertical at the boundary of $C$, in the sense that 
  \begin{equation*}
    \frac{\nabla u}{\sqrt{1+\abs{\nabla u}^2}} \to \nu_C \mbox{ uniformly on }\partial C.
  \end{equation*}
\end{thm}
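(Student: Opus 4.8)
The plan is to follow Giusti's argument from~\cite{giusti78} (see also~\cite[Prop.~6.2]{AmaBel15}); below I describe its structure and indicate where hypothesis~\eqref{eq:uniqcheeg} enters.

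\emph{Necessity.} First I would dispatch the easy implication ``a solution exists $\Rightarrow$~\eqref{eq:uniqcheeg}''. Let $u\in\Cder{2}(C)$ solve~\eqref{eq:pscbmean} with $z\eqdef\frac{\nabla u}{\sqrt{1+\abs{\nabla u}^2}}\to\nu_C$ uniformly on $\partial C$, so that $z$ extends to a continuous field on $\overline C$ with $\abs{z}<1$ everywhere in $C$. For any $B\subseteq C$ of finite perimeter with $\abs{B}>0$ and $\abs{C\setminus B}>0$, the connectedness of $C$ forces $\Hh^1(\partial^*B\cap C)=P(B;C)>0$, and the generalized divergence theorem gives
\begin{equation*}
  h_C\abs{B}=\int_B\divx z\,\d x=\int_{\partial^*B}z\cdot\nu_B\,\d\Hh^1<P(B),
\end{equation*}
the inequality being strict because $\int_{\partial^*B\cap C}(1-\abs{z})\,\d\Hh^1>0$ while $z\cdot\nu_B\le\abs{z}\le1$ $\Hh^1$-a.e.\ on $\partial^*B\cap\partial C$. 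This is exactly~\eqref{eq:uniqcheeg}.

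\emph{Existence.} The substantial part is the converse. Assuming~\eqref{eq:uniqcheeg}, I would use Giusti's approximation scheme: for $\sigma\in(0,1)$, solve the (compatible, since $\sigma h_C\abs{C}=\sigma P(C)$) capillary problem $\divx(\nabla u_\sigma/\sqrt{1+\abs{\nabla u_\sigma}^2})=\sigma h_C$ in $C$ with contact-angle condition $\frac{\nabla u_\sigma\cdot\nu_C}{\sqrt{1+\abs{\nabla u_\sigma}^2}}=\sigma$ on $\partial C$, obtaining $u_\sigma\in\Cder{2}(C)\cap\Cder{0}(\overline C)$ unique up to an additive constant, and normalise $\min_C u_\sigma=0$. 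The heart of the matter is then a uniform (in $\sigma$) bound $\sup_C u_\sigma\le M$ together with an interior gradient estimate $\abs{\nabla u_\sigma}\le G(K)$ on every $K\subset\subset C$: I would derive these by comparing $u_\sigma$ with its truncations $\min(u_\sigma,t)$, so that the super-level sets $\{u_\sigma>t\}\subsetneq C$ can be controlled through~\eqref{eq:uniqcheeg} (in the form $\sigma h_C\abs{B}<P(B)$ for proper $B$) and the isoperimetric inequality, and then invoking the Bombieri--De~Giorgi--Miranda interior gradient estimate. Passing to the limit $\sigma\to1$ with interior Schauder estimates for the (smooth, locally uniformly elliptic) operator yields $u\in\Cder{2}(C)$ solving~\eqref{eq:pscbmean}, bounded below; finally a barrier at $\partial C$ — admissible because $\partial C\in\Cder{2}$ and~\eqref{eq:uniqcheeg} makes $C$ calibrable, whence $\kappa_{\partial C}\le h_C$ — gives the vertical behaviour $z\to\nu_C$ on $\partial C$.

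\emph{Uniqueness.} Given two solutions $u_1,u_2$ as above, I would set $w=u_1-u_2$; subtracting the equations, $w$ solves $\divx(A(x)\nabla w)=0$ in $C$ with $A$ symmetric and uniformly elliptic on each $K\subset\subset C$, and $w$ bounded on $\overline C$ (both graphs being bounded). Testing against $(w-\sup_C w)^+$ and discarding the boundary term, which vanishes because the fluxes of $u_1$ and $u_2$ both tend to $1$ on $\partial C$, the strong maximum principle gives $w\equiv\mathrm{const}$. \emph{The main obstacle is the uniform a priori estimate in the existence step}: it is precisely there that the strict sub-Cheeger inequality~\eqref{eq:uniqcheeg} — equivalently, that $C$ is its own \emph{unique} Cheeger set — is indispensable, since in its absence a minimiser may develop an interior vertical facet over the boundary of a competing Cheeger subset and no classical $\Cder{2}$ solution exists.
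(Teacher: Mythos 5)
The paper does not prove Theorem~\ref{thm:giustiexist}: it is quoted directly from Giusti~\cite{giusti78}, with a pointer to~\cite[Prop.~6.2]{AmaBel15}, so there is no in-paper argument to compare with. Your sketch is nonetheless a faithful outline of how Giusti's argument runs: the capillary approximation with contact angle $\sigma\nearrow 1$ and curvature $\sigma h_C$ (compatible since $h_C|C|=P(C)$), the uniform height bound extracted from the strict sub-Cheeger inequality~\eqref{eq:uniqcheeg} by comparing with super-level sets, the interior gradient estimate of Bombieri--De~Giorgi--Miranda type, and a Schauder bootstrap in the limit $\sigma\to 1$.

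Two small points deserve flagging. In the necessity direction you assume the vertical boundary behaviour $z\to\nu_C$, which the statement places among the consequences rather than among the hypotheses of the ``only if'' implication; in fact it is not needed, since $|z|<1$ pointwise in $C$ together with the bound $|[z,\nu]|\le 1$ for the Anzellotti trace of $z$ on $\partial^*B\cap\partial C$ already yields the strict inequality $h_C|B|<P(B)$. In the uniqueness step, the test function $(w-\sup_Cw)^+$ is identically zero and therefore carries no information; the clean route is to test the difference of the two equations against $w$ itself, suitably cut off near $\partial C$, and invoke the strict monotonicity of $p\mapsto p/\sqrt{1+|p|^2}$ to conclude $\nabla w\equiv 0$, noting that the boundary contribution vanishes because both normal fluxes equal $1$ on $\partial C$ and $w$ is bounded so the cutoff error disappears.
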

The consequence is that $z$ defined in~\eqref{eq:zinnercalib} is a $\Cder{1}$ vector field in $C$, (in fact analytic, see~\cite{AmaBel15}), continuous in $\overline{C}$.

In fact, Giusti also proved that the condition~\eqref{eq:uniqcheeg} is equivalent to~\eqref{eq:prelimcalibchar}, namely the calibrability of $C$ (this result was extended to $\RR^N$ in \cite{alteruniq09}). As a result, for a calibrable set $C$, one may choose the calibration given by the vectorfield $z$ such that
\begin{equation*}
  \forall x\in C,\quad  z(x)=\frac{\nabla u(x)}{\sqrt{1+\abs{\nabla u(x)}^2}}
\end{equation*}
and $\restr{z}{\RR^2\setminus C}$ is a vectorfield such that $\normi{z}\leq 1$, $\divx z=h_C$ and $\theta(z,D\bun{C})=-1$.

A first step in proving that $\abs{z}<1$ inside $C$ is the following theorem by Giusti.
\begin{thm}[\cite{giusti78}]\label{thm:giustibounded}
  For every compact set $K\subset \interop{C}$, there exist exists $Q>0$ such that for any solution of~\eqref{eq:pscbmean} in $\interop{C}$, 
  \begin{equation*}
    \sup_K\abs{\nabla u}\leq Q.
  \end{equation*}
\end{thm}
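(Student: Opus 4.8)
The plan is to derive the bound from the classical interior gradient estimate for the prescribed mean curvature equation, combined with the uniqueness statement of Theorem~\ref{thm:giustiexist}. Since $h_C$ is a constant (hence bounded) right-hand side, \eqref{eq:pscbmean} is a prescribed mean curvature equation with bounded curvature, so the Bombieri--De~Giorgi--Miranda interior gradient estimate applies (see \cite{giusti78} and the references therein): there are constants $c_1,c_2>0$ depending only on the dimension (here $2$) and on $h_C$ such that, whenever $\bar B(y,\rho)\subset\interop{C}$ and $u\in\Cder{2}(B(y,\rho))$ solves \eqref{eq:pscbmean}, one has
\[
  \abs{\nabla u(y)} \le c_1 \exp\!\Big( c_2\,\frac{\mathrm{osc}_{B(y,\rho)} u}{\rho} \Big).
\]

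First I would fix $K\subset\interop{C}$, set $\rho_0\eqdef\tfrac12\dist(K,\partial C)>0$, and let $K'\eqdef\enscond{x\in\RR^2}{\dist(x,K)\le\rho_0}$, which is a compact subset of $\interop{C}$ since $\dist(x,\partial C)\ge\rho_0$ for every $x\in K'$. For $y\in K$ we have $\bar B(y,\rho_0)\subset K'$, so the displayed estimate gives $\abs{\nabla u(y)}\le c_1\exp(c_2\,\mathrm{osc}_{K'}u/\rho_0)$, and it only remains to bound $\mathrm{osc}_{K'}u$ by a quantity independent of the particular solution $u$. This is where Theorem~\ref{thm:giustiexist} enters: any two solutions of \eqref{eq:pscbmean} on $\interop C$ differ by an additive constant, so $\nabla u$ — and hence $\mathrm{osc}_{K'}u$ and the whole quantity $\sup_K\abs{\nabla u}$ — does not depend on $u$; being the oscillation of a fixed $\Cder{2}(\interop C)$ function over a compact set it is finite, and I may take $Q\eqdef c_1\exp(c_2\,\mathrm{osc}_{K'}u/\rho_0)$ (equivalently $Q=\sup_K\abs{\nabla u}$), which depends only on $K$ and $C$.

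Two remarks on robustness and on the main difficulty. The argument is written so as to survive the situation where \eqref{eq:pscbmean} is solved only on an exhausting sequence of smooth subdomains $\Omega_k\nearrow C$ — which is the context in which such an a priori estimate is actually used, to pass to the limit and construct $u$: the interior gradient estimate then yields a bound uniform in $k$ on any fixed $K$ as soon as $\mathrm{osc}_{K'}u_k$ is controlled uniformly, which one obtains by a comparison/barrier argument after normalizing, e.g.\ by $\inf_{\Omega_k}u_k=0$. \textbf{The main obstacle} is the interior gradient estimate itself, if one does not take it as known: its proof requires differentiating \eqref{eq:pscbmean}, testing the equation satisfied by $\partial_i u$ against a cutoff times an increasing function of $\abs{\nabla u}$ (or, equivalently, exploiting the area-minimality of the graph of $u$ together with De~Giorgi-type estimates and the auxiliary function of Bombieri--De~Giorgi--Miranda), followed by a Sobolev inequality on the graph and a De~Giorgi/Moser iteration. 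Granting that estimate, the only genuine content here is the uniform control of $\mathrm{osc}_{K'}u$, which is immediate from the uniqueness and $\Cder{2}$ regularity already recorded in Theorem~\ref{thm:giustiexist}.
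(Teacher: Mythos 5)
The paper gives no proof of this statement: it is imported wholesale from Giusti~\cite{giusti78} as a black box, so there is no ``paper's proof'' to compare against. Judged on its own, your reconstruction is sound and identifies the right engine: the Bombieri--De~Giorgi--Miranda interior gradient estimate, with the standard buffer construction $K'$ and the reduction to an oscillation bound on $K'$.

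Two remarks, though. First, you are right that once Theorem~\ref{thm:giustiexist} is granted in full --- existence of a $\Cder{2}(C)$ solution, unique up to additive constants --- the statement collapses to a near-triviality: $\nabla u$ is a single, well-defined continuous field on $C$, so $\sup_K\abs{\nabla u}$ is automatically a finite number independent of the representative chosen, and the BDM estimate is not even needed. Your own closing sentence acknowledges exactly this. Second, and this is the caveat worth keeping in mind: in Giusti's development the logical order is the reverse of yours. The interior gradient estimate is an \emph{a priori} bound, established \emph{before} existence, precisely so that it can be fed into a compactness argument (solutions on an exhaustion $\Omega_k\nearrow C$, normalized by $\inf u_k=0$, with oscillation controlled by barriers) to \emph{produce} the solution whose existence Theorem~\ref{thm:giustiexist} asserts. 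Invoking Theorem~\ref{thm:giustiexist} to dispatch Theorem~\ref{thm:giustibounded} is therefore legitimate within this paper's expository order, but circular with respect to Giusti's. You flag this in your ``remarks on robustness,'' which shows you see the issue; just be aware that the two theorems are not independent restatements but links in a chain, and the version you would actually need to prove from scratch is the a priori one, where the genuine work is the BDM estimate together with the barrier bound on the oscillation.
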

This implies that $\sup_{K}\frac{\abs{\nabla u}}{\sqrt{1+\abs{\nabla u}^2}}<1$. In the next proposition, we study further its decay  inside $C$, which yields a non-degenerate inner calibration for $C$.

\begin{prop}
  Let $C\subset \RR^2$ be a bounded strictly convex calibrable set such that $\partial C$ is of class $\Cder{2}$ 
  and $h_C >\sup_{\partial C} \abs{\kappa_{\partial C}}$. 
  Assume moreover that the solution to~\eqref{eq:pscbmean} is continuous up to the boundary, i.e.\ $u\in \Cder{}(\overline{C})$.
Then, there exists $\alpha>0$, there exists a vector field $z\in \Cder{}(\overline{\Omega})\cap \Cder{1}(\Omega)$ such that $\divx z= h_C$, $z\cdot \nu =1$ on $\partial C$, and
  \begin{equation*}
    \forall x\in C, \quad \abs{z(x)}\leq \frac{\alpha}{\sqrt{d(x)^2+ \alpha^2}}, 
  \end{equation*}
  where $d(x)\eqdef \mathrm{dist}(x,\partial C)$.
\end{prop}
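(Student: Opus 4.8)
The plan is to build the vector field $z$ by exploiting the auxiliary prescribed mean curvature equation~\eqref{eq:pscbmean}, setting $z = \nabla u / \sqrt{1+|\nabla u|^2}$ in $C$ and $z = \nu_C$ on $\partial C$, exactly as in~\eqref{eq:zinnercalib}. Theorem~\ref{thm:giustiexist} gives a solution $u \in \Cder{2}(C)$, unique up to a constant, with vertical graph at $\partial C$, so $z$ is $\Cder{1}$ in $C$, continuous on $\overline{C}$ (using the hypothesis $u \in \Cder{}(\overline{C})$), satisfies $\divx z = h_C$ and $z\cdot\nu_C = 1$ on $\partial C$ automatically. The whole content of the statement is therefore the quantitative decay estimate $|z(x)| \le \alpha/\sqrt{d(x)^2+\alpha^2}$, i.e.\ an estimate showing $|z|$ is strictly less than $1$ and decays like $1/d(x)$ away from the boundary.

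First I would reduce the decay estimate to a gradient bound: since $|z| = |\nabla u|/\sqrt{1+|\nabla u|^2}$ is an increasing function of $|\nabla u|$, the claimed bound $|z(x)| \le \alpha/\sqrt{d(x)^2+\alpha^2}$ is equivalent to the pointwise gradient bound $|\nabla u(x)| \le \alpha/d(x)$ for all $x \in C$. So the goal becomes: \emph{there exists $\alpha>0$ with $d(x)\,|\nabla u(x)| \le \alpha$ on $C$.} Near the boundary this is consistent with $u$ having a vertical graph (so $|\nabla u| \to \infty$ like $1/d$), and in the interior Theorem~\ref{thm:giustibounded} already gives a uniform bound on compact subsets; the task is to glue these into a single clean bound of the form $d\cdot|\nabla u| \le \alpha$.

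The standard tool here is a \emph{gradient estimate for solutions of the prescribed mean curvature equation on balls}: for a solution $u$ of $\divx(\nabla u/\sqrt{1+|\nabla u|^2}) = h_C$ on a ball $B(x_0,\rho) \subset C$, there is an interior gradient bound of the form $|\nabla u(x_0)| \le c_1 \exp(c_2(|u(x_0)| - \inf_{B(x_0,\rho)} u)/\rho)$ with constants depending only on $\|h_C\|_\infty$ (this is the classical Bombieri--De Giorgi--Miranda / Ladyzhenskaya--Ural'tseva estimate, cf.\ Giusti's book on minimal surfaces). Apply this with $\rho = d(x)/2$ at a point $x$: the oscillation $u(x) - \inf_{B(x,d(x)/2)} u$ needs to be controlled, and for this I would use that $u$ is bounded from below on $C$ (Theorem~\ref{thm:giustiexist}) and bounded above on any interior compact set; more precisely, using the comparison with the solution on the Cheeger-type barrier, one shows $\mathrm{osc}_{B(x,d(x)/2)} u$ stays bounded uniformly as $x$ ranges over $C$, because the lower bound is global and the upper bound degrades only logarithmically near $\partial C$. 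Combining, $d(x)|\nabla u(x)|$ is bounded by a constant $\alpha$ depending on $C$ and $h_C$, which is exactly the claim.

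\textbf{Main obstacle.} The delicate point is the uniform control of the oscillation of $u$ inside balls $B(x, d(x)/2)$ as $x$ approaches $\partial C$: one must ensure that the exponential in the interior gradient estimate does not blow up faster than any power of $1/d(x)$ — otherwise $d(x)|\nabla u(x)|$ would not stay bounded. Here the strict convexity of $C$ and the strict inequality $h_C > \sup_{\partial C}|\kappa_{\partial C}|$ are essential: they guarantee that $C$ strictly contains its Cheeger set, which by Theorem~\ref{thm:giustiexist} is precisely the solvability condition~\eqref{eq:uniqcheeg}, and they allow one to construct explicit radial supersolutions (pieces of spheres of suitable radius) near each boundary point that control $u$ from above with the right $\log(1/d)$ rate. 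An alternative, perhaps cleaner route is to derive the estimate directly from the geometry of the graph of $u$: the graph is a surface of constant mean curvature $h_C/(N-1)$ in $\RR^3$ whose boundary is the vertical cylinder over $\partial C$, so barrier arguments with spherical caps of radius $(N-1)/h_C$ sliding under the graph yield the pointwise bound on $|\nabla u|$ directly. I would carry out whichever of these barrier constructions gives the sharpest constant $\alpha$, but either way the crux is producing the correct $1/d(x)$ decay rate rather than merely $|z|<1$.
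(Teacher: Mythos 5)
Your reduction is exactly right: since $t \mapsto t/\sqrt{1+t^2}$ is increasing, the claimed decay $|z(x)| \le \alpha/\sqrt{d(x)^2+\alpha^2}$ is equivalent to the pointwise gradient bound $d(x)\,|\nabla u(x)| \le \alpha$. You also correctly set up the candidate vector field from~\eqref{eq:zinnercalib} and correctly identify that $\divx z = h_C$, $z\cdot\nu=1$ on $\partial C$ and continuity on $\overline{C}$ are already supplied by Theorem~\ref{thm:giustiexist} and the hypothesis $u \in \Cder{}(\overline{C})$. The issue is what happens after that reduction.

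The route you propose through the Bombieri--De Giorgi--Miranda interior gradient estimate does not close. That estimate at a point $x$ with $\rho = d(x)/2$ reads
\begin{equation*}
  |\nabla u(x)| \;\le\; c_1 \exp\!\left( c_2\,\frac{\mathrm{osc}_{B(x,\rho)} u}{\rho}\right),
\end{equation*}
and to deduce $|\nabla u(x)| \lesssim 1/d(x)$ you would need $\mathrm{osc}_{B(x,d(x)/2)} u \lesssim d(x)\log\bigl(1/d(x)\bigr)$. You claim it is enough that the oscillation stays \emph{bounded} (``the upper bound degrades only logarithmically''), but a bounded oscillation yields $\mathrm{osc}/\rho \sim 1/d(x)$, hence an upper bound of order $\exp(C/d(x))$ --- catastrophically worse than the polynomial $1/d(x)$ you need. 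In fact $u$ is continuous on $\overline{C}$ by hypothesis, so there is no log divergence of $u$ near $\partial C$; but uniform continuity alone gives no linear modulus, and since the graph is vertical at $\partial C$ one cannot expect Lipschitz control of $u$ up to the boundary. So the mechanism you invoke to tame the exponential is not there, and the plan as stated fails. Your alternative sketch via sliding spherical caps is a reasonable instinct but is left entirely at the level of intuition; it is not obvious how to implement it to extract precisely the $1/d$ rate without essentially redoing a barrier-based gradient estimate, which brings back the same quantitative difficulty.

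The paper's proof avoids all of this with one key structural fact that does not appear in your proposal: by Korevaar's convexity theorem for capillary-type problems (\cite[Th.~2.2]{korevaar1983convex}), the solution $u \in \Cder{2}(C)\cap\Cder{}(\overline{C})$ of~\eqref{eq:pscbmean} on the strictly convex domain $C$ is a \emph{convex} function. Combined with the fact that $h_C > \sup_{\partial C}|\kappa_{\partial C}|$ forces $u$ to be bounded (Giusti), the estimate drops out in two lines: first one checks $\nabla u \ne 0$ a.e.\ (otherwise $\argmin u$ would have nonempty interior, contradicting the comparison principle against a constant); then, taking $y = x + d(x)\,\nabla u(x)/|\nabla u(x)| \in C$, convexity gives $u(y) - u(x) \ge d(x)\,|\nabla u(x)|$, hence $|\nabla u(x)| \le 2\|u\|_\infty/d(x)$, which is exactly the bound you were after with $\alpha = 2\|u\|_\infty$. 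So the missing idea is the convexity of $u$; once you have it, the estimate is elementary and you do not need the BDM machinery at all.
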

\begin{proof}

  By~Theorem~\ref{thm:giustiexist}, there exists a $\Cder{2}$ solution $u$ to~\eqref{eq:pscbmean} which is vertical at the boundary, and the inequality $h_C > \ess \sup_{\partial C} \abs{\kappa_{\partial C}}$ implies that $u$ is bounded (see~\cite[Th. 3.1]{giusti78}). We define $z(x)\eqdef \frac{\nabla u(x)}{\sqrt{1+\abs{\nabla u(x)}^2}}$ for all $x\in C$. 

  Let us prove that $\abs{\nabla u(x)}>0$ for a.e.\ $x\in C$.
  First, we assume that $C$ is strictly convex. Since $u\in  \Cder{2}{(\Omega)}\cap\Cder{}{(\overline{\Omega})}$, by \cite[Th.~2.2]{korevaar1983convex} u is a convex function. As a result, $\enscond{x\in C}{\nabla u(x)=0}=\argmin_C u$, and it is thus a closed convex set. Assume by contradiction that the dimension of $\argmin_C u$ is $2$, i.e.\ $\argmin_C u$ contains an open ball $B(x_0,r) \subset \interop(C)$ for some $x_0\in \interop(C)$, $r>0$. Let $T$ denote the operator $T:u\mapsto \frac{\nabla u}{\sqrt{1+\abs{\nabla u}^2}}$, and let $w$ be the constant function $x\mapsto \min_C u$.  We have $u\leq w$ in $\partial B(x_0,r)$ (in fact equality holds), and $0=\divx Tw< \divx Tu=h_C$ in $B(x_0,r)$. By Theorem~\ref{thm:giustibounded}, Problem~\eqref{eq:pscbmean} is locally uniformly elliptic, and the comparison principle~\cite[Th.~10.1]{gilbarg1977elliptic} yields that $u<w$ in $B(x_0,r)$, which is a contradiction. As a result, the dimension of $\argmin_C u$ is strictly less than $2$ and $\argmin_C u$ is Lebesgue-negligible.

  Now, for a.e.\ $x\in \interop(C)$, we may define $y\eqdef x+d(x)\frac{\nabla u(x)}{\abs{\nabla u(x)}}$, and we observe that $y\in C$. By convexity of $u$, $u(y)-u(x)\geq \nabla u(x)\cdot(y-x)=d(x)\abs{\nabla u(x)}$. As a result, $\abs{\nabla u(x)}\leq \frac{2\normi{u}}{d(x)}$, and
  \begin{equation*}
    \abs{Tu(x)}\leq \frac{2\normi{u}}{\sqrt{d(x)^2+4\normi{u}^2}}. 
  \end{equation*}
The claimed result holds by a density argument.  
\end{proof}

\subsection{Outer calibrations}
\label{sec:calibout}
It is proved in~\cite[Th. 5]{beltvflow02} (see also \cite[Th. 13]{altercalib05} in dimension $N$) that sets which satisfy a geometric condition (namely convex sets that are far enough from one another) have a complement which is $+$calibrable. That condition holds for $\Cder{1,1}$ convex sets.

However, it is not clear from the proof that the corresponding vector field has norm $<1$ in compact sets of $\RR^2\setminus C$.
We provide below an explicit construction when the set has $\Cder{2}$ boundary. Admittedly the hypothesis is quite restrictive but we think that this construction gives some insight on the geometric properties involved.

\begin{prop}
Let $C\subset \RR^2$ be a nonempty bounded open convex subset with $\Cder{2}$ boundary.
There exists a vector field $z\in L^\infty\cap \Cder{}{(\overline{\RR^2\setminus C})}$ such that $z=\nu$ on $\partial C$, $\divx z=0$ in the sense of distributions and $\abs{z}<1$ on every compact subset of $\RR^2\setminus C$.
\end{prop}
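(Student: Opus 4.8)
The plan is to build the outer calibration $z$ explicitly from the signed distance function $d(x) = \mathrm{dist}(x, C)$ for $x \in \RR^2 \setminus C$. Since $\partial C$ is of class $\Cder{2}$ and $C$ is convex, there is a tubular neighborhood $\{0 \le d(x) < \rho\}$ of $\partial C$ on which $d$ is $\Cder{2}$, every point $x$ has a unique projection $\pi(x) \in \partial C$, and $\nabla d(x) = \nu_C(\pi(x))$ is the outer unit normal. On this neighborhood I would set $z(x) = \varphi(d(x))\, \nabla d(x)$ for a suitable scalar profile $\varphi : [0,\rho) \to [0,1]$ with $\varphi(0) = 1$, and then extend $z$ to all of $\RR^2 \setminus C$ in a way that keeps $|z| < 1$ on compact subsets. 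The first key computation is $\divx z = \varphi'(d) + \varphi(d)\, \Delta d$; using the fact that, in $\RR^2$, $\Delta d(x) = \kappa(\pi(x))/(1 + d(x)\kappa(\pi(x)))$ where $\kappa \ge 0$ is the (nonnegative, by convexity) curvature of $\partial C$, the divergence-free equation becomes a first-order linear ODE for $\varphi$ along each normal ray. Because the coefficient depends on $x$ only through $d$ and $\pi(x)$, one does not get a single ODE but a family; however, the structural point is that $\divx z = 0$ forces $\varphi(d)\,(1 + d\kappa) = \mathrm{const}$ along each ray, i.e. $z(x) = \frac{1}{1 + d(x)\kappa(\pi(x))}\,\nu_C(\pi(x))$, which is exactly the vector field one gets from the level-set flow of $\partial C$ by its curvature. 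Since $\kappa \ge 0$, this $z$ has $|z(x)| = 1/(1 + d(x)\kappa(\pi(x))) \le 1$, with strict inequality wherever $\kappa > 0$.

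**Handling the flat parts and the far field.**

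The main obstacle is that if $\partial C$ contains a straight segment (where $\kappa = 0$), the formula above gives $|z| \equiv 1$ on the normal rays emanating from that segment, which fails the requirement $|z| < 1$ on compact subsets of $\RR^2 \setminus C$; moreover the naive normal field is only defined near $\partial C$ and rays from distinct boundary points eventually collide. To fix this I would not insist on $\divx z = 0$ via the pure curvature-flow field alone, but instead glue: pick a large ball $B(0,\Lambda) \supset C$, use the curvature-flow field in an annular region near $\partial C$, and in the exterior region $\RR^2 \setminus B(0,\Lambda)$ use a divergence-free field of modulus $< 1$ that decays (for instance a suitably scaled version of the exterior field $\Lambda x /|x|^2$ appearing in~\eqref{eq-calib-disc}, which has $|z| = \Lambda/|x| < 1$ and zero divergence away from the origin), then interpolate in the intermediate shell. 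The interpolation must preserve $\divx z = 0$; this is where one uses the $2$-dimensional trick of writing a divergence-free field as a rotated gradient, $z = \nabla^\perp \psi$ for a stream function $\psi$, so that the gluing becomes a matching problem for $\psi$ and its gradient across the interfaces, and $|z| < 1$ becomes $|\nabla \psi| < 1$. For the flat segments one perturbs $\psi$ slightly so that $|\nabla\psi|$ dips strictly below $1$ immediately off $\partial C$ while keeping $\nabla\psi = \nu_C$ exactly on $\partial C$; convexity ensures the perturbation can be chosen globally consistent because the outward normal map is monotone and the normal rays from $\partial C$ do not cross before leaving a fixed neighborhood (the reach of $\partial C$ is positive for a $\Cder{2}$ convex set).

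**Assembling the proof.**

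Concretely the steps are: (1) fix $\rho \in (0, \mathrm{reach}(\partial C))$ so $d$ is $\Cder{2}$ on $N_\rho := \{0 \le d < \rho\}$ and record $\nabla d = \nu_C \circ \pi$ and the formula for $\Delta d$; (2) define $z$ on $N_\rho$ by the curvature-flow formula, verify $z \in \Cder{1}$, $\divx z = 0$, $z = \nu_C$ on $\partial C$, and $|z| \le 1$ with the quantitative bound $|z(x)| = (1 + d(x)\kappa(\pi(x)))^{-1}$; (3) on a compact annulus $N_\rho \setminus N_{\rho/2}$, and using the stream-function representation, glue $z$ to the exterior field $\Lambda x/|x|^2$ defined on $\RR^2 \setminus B(0,\Lambda)$ (enlarging $\Lambda$ if needed so $B(0,\Lambda) \supset C$ and the annuli are disjoint), checking that $|\nabla \psi| < 1$ throughout the gluing region by making the transition between the two profiles monotone; (4) handle any flat portions of $\partial C$ by the small perturbation of $\psi$ described above, using convexity to keep it globally well-defined; (5) conclude $z \in L^\infty \cap \Cder{}(\overline{\RR^2 \setminus C})$, $z = \nu$ on $\partial C$, $\divx z = 0$ distributionally, and $|z| < 1$ on every compact $K \subset \RR^2 \setminus C$ — since $K$ meets only the region near $\partial C$ (where $\kappa > 0$ forces $|z| < 1$, or the perturbation does on flat parts and $d > 0$ on $K$ forces it elsewhere) and the far region (where $|z| = \Lambda/|x| < 1$) and the compact gluing shell (where $|z| < 1$ by construction). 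The hardest point to get right is the simultaneous requirement of $\divx z = 0$ and strict inequality $|z| < 1$ near flat boundary pieces while matching $z = \nu_C$ exactly on $\partial C$; the stream-function viewpoint plus convexity of $C$ (positivity of the reach, monotonicity of the normal map) is what makes this tractable, and it is why the hypothesis $\partial C \in \Cder{2}$ — acknowledged in the text as restrictive — is used.
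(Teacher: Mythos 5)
Your outline correctly identifies the central obstruction (flat pieces of $\partial C$ make the naive normal field $x\mapsto \nu_C(\pi(x))/(1+d(x)\kappa(\pi(x)))$ have modulus exactly $1$) and correctly identifies the stream-function viewpoint as a useful tool, but it leaves precisely the hard step unproved. In step (4) you write that one ``perturbs $\psi$ slightly so that $|\nabla\psi|$ dips strictly below $1$ immediately off $\partial C$ while keeping $\nabla\psi=\nu_C$ exactly on $\partial C$,'' but you do not construct such a perturbation nor verify that it is compatible with the boundary data and with the matching to the far field. Note that the boundary data forces $|\nabla\psi|=1$ on $\partial C$ itself (since $z|_{\partial C}=\nu_C$), so the needed statement is a strict decrease of $|\nabla\psi|$ as one leaves $\partial C$, uniformly on compacta, which is a genuine quantitative assertion and is exactly the content of the paper's estimate. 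The paper's proof produces this explicitly: in the global coordinates $(s,d)\mapsto y(s)+d\nu(s)$ it writes down a two-parameter family $(z_1,z_2)$ (equations~\eqref{eq:constrzoutu}--\eqref{eq:constrzoutd}, controlled by a small $\alpha>0$) that ``twists'' the normal field slightly, and then bounds $|z|^2-1$ from above by a strictly negative quantity depending on $d$ and $\kappa$, with a separate argument for $d>1$. Your proposal postpones this to an unconstructed perturbation, so as written it does not constitute a proof.

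A second, smaller issue: your claim that ``the naive normal field is only defined near $\partial C$ and rays from distinct boundary points eventually collide'' is incorrect for a convex $C$. Convexity of $C$ makes the map $(s,d)\mapsto y(s)+d\nu(s)$ a global diffeomorphism from $S\times(0,+\infty)$ onto $\RR^2\setminus\overline C$ (outward normal rays from distinct boundary points of a convex body never cross, and the Jacobian $1+d\kappa(s)$ is positive everywhere since $\kappa\ge 0$). This is precisely why the paper can define $z$ by a single formula on all of $\RR^2\setminus\overline C$ and does not need the far-field $\Lambda x/|x|^2$ or an intermediate gluing annulus. Introducing the gluing only adds work: you would still need to match directions of two fields that do not point the same way in the transition region, and to verify $|z|<1$ there.

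To repair the argument in the stream-function language you would want to exhibit concretely a $\psi$ on $\RR^2\setminus\overline C$ with $\psi(y(s))=s$ (mod $P(C)$), $\partial_\nu\psi=0$ on $\partial C$, and $|\nabla\psi|<1$ on compacta of the open exterior, together with estimates that dominate the contribution from the flat segments (the paper's choice corresponds, in your language, to a stream function whose second normal derivative is turned on with a small factor $\alpha$ proportional to $\kappa-2\pi/P(C)$, which forces a quadratic drop $|z|^2\le 1-c\,d^2$ in front of flat pieces and a linear drop $|z|^2\le 1-c\kappa d$ elsewhere — see the paper's Remark~\ref{rem:decayz}).
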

The decay of $z$ is discussed in Remark~\ref{rem:decayz} below.

\begin{proof}
 We choose an arclength parametrization of $\partial C$, $s\mapsto y(s)$ defined on $S\eqdef \RR/(P(C)\ZZ)$, and we consider a basis $(\tau(s),\nu(s))$ such that $\tau(s)=y'(s)$, $\nu(s)=R_{-\pi/2}\tau(s)$, where $R_{-\pi/2}$ the rotation with angle $-\pi/2$. We assume that the parametrization is such that $\nu(s)$ is the outer unit normal to $C$.

The mapping $\varphi: (s,d)\mapsto y(s)+d\nu(s)$ is a $\Cder{1}$-diffeomorphism from $S\times \RR_+^*$ onto $\RR^2\setminus \overline{C}$, with
\begin{equation*}
  \frac{\partial \varphi}{\partial s}(s,d)=\tau(s) + d\kappa(s)\tau(s),\qandq  \frac{\partial \varphi}{\partial d}(s,d)= \nu(s),
\end{equation*}
where $\kappa(s)\geq 0$ is the curvature of $\partial C$ at $y(s)$.

In order to define a vector field $\overline{z}:\RR^2\setminus\overline{C}\rightarrow \RR^2$ such that $\divx \overline{z}=0$, it is sufficient to define a vector field $\tz:S\times \RR_+^*\rightarrow \RR^2$ such that $\overline{z}(x)=\tz(\varphi^{-1}(x))$ and
\begin{equation*}
  \Tr(D\tz D\varphi^{-1})=0.
\end{equation*}
In other words, we shall build a vector field $\tz$ such that
\begin{equation}\label{eq:divzzero}
  \frac{1}{1+\kappa(s)d}\partial_s z_1(s,d) +  \frac{\kappa(s)}{1+\kappa(s)d}z_2(s,d) +\partial_d z_2(s,d) =0.
\end{equation}
Here, for the sake of brevity, we have denoted by $\partial_s$ (resp. $\partial_d$) the derivatives with respect to $s$ (resp. $d$), and by $(z_1,z_2)$ the coordinates of $\tz$ in the basis  $(\tau(s),\nu(s))$.

Given $\alpha>0$ (to be fixed later), and the function $\eta: t\mapsto \min(t,2-t)$, we define
\begin{align}
  \label{eq:constrzoutu}  z_1(s,d)&= -\alpha \left(\int_0^s (\kappa(s')-\frac{2\pi}{P(C)})\d s'\right)\eta(d),\\
 \label{eq:constrzoutd}   z_2(s,d)&= \frac{1}{1+\kappa(s)d}\left(1+\alpha \left(\int_0^d \eta\right)\left(\kappa(s)- \frac{2\pi}{P(C)}\right)\right).
\end{align}
Observe that $\lim_{(s,d)\to (s_0,0)} z(s,d)=\nu(s_0)$, and that $z$ is continuous in $\RR^2\setminus \overline{C}$ since $\int_0^{P(C)}\kappa(s')\d s'=2\pi$.
Moreover, it is not difficult to check that $z$ satisfies~\eqref{eq:divzzero} as well.
As a result, $\divx \overline{z}=0$ pointwise in $\RR^2\setminus \overline{C}$, and since $\overline{z}$ is continuous we see by approximation that it also holds in the sense of distributions.

It remains to prove that $\abs{z}^2-1<0$.
\begin{align*}
  z_1^2+z_2^2-1&= \alpha^2 \left(\int_0^s (\kappa-\frac{2\pi}{P(C)})\right)^2\eta^2 \\
               &+ \frac{1}{(1+\kappa d)^2}\left[\left(1+\alpha \left(\int_0^d \eta\right)\left(\kappa- \frac{2\pi}{P(C)}\right)\right)^2 -1 - (\kappa d)^2-2\kappa d\right]\\
\end{align*}
There is a constant $M>0$ which only depends on $\sup_{\partial C} \kappa$ and $P(C)$ such that $\left(\int_0^s (\kappa-\frac{2\pi}{P(C)})\right)^2\leq M$ and $\abs{\kappa-\frac{2\pi}{P(C)}}^2\leq M$.

The term inside brackets is equal to 
\begin{align*}
  &\alpha^2 \left(\int_0^d \eta\right)^2\left(\kappa- \frac{2\pi}{P(C)}\right)^2 +2\alpha \left(\int_0^d \eta\right)\left(\kappa- \frac{2\pi}{P(C)}\right)- (\kappa d)^2-2\kappa d\\
  &\leq \alpha^2 M \left(\int_0^d \eta\right)^2 + 2\kappa \left( \alpha \left(\int_0^d \eta\right) -d \right) - \frac{4\pi \alpha}{P(C)} \int_0^d\eta - (\kappa d)^2 \\
  &\leq  2\kappa \left( \alpha \left(\int_0^d \eta\right) -d \right) +  \left(\alpha^2 M - \frac{4\pi \alpha}{P(C)}\right) \int_0^d\eta - (\kappa d)^2,
\end{align*}
since $\int_0^d\eta\leq 1$.
Hence, for $d\leq 1$, we obtain that for $\alpha$ small enough (depending on $M$ and $P(C)$), that term is less than or equal to
\begin{align*}
-\kappa d  - \frac{2\pi \alpha}{P(C)}\frac{d^2}{2} - (\kappa d)^2\leq 0,
\end{align*}
which yields (writing $K\eqdef \sup_{\partial C} \kappa$)
\begin{align}
  z_1^2+z_2^2-1&\leq \alpha^2 M d^2 + \frac{1}{1+K}\left(-\kappa d  - \frac{2\pi \alpha}{P(C)}\frac{d^2}{2} - (\kappa d)^2\right)\nonumber\\
               &\leq - \frac{1}{1+K}\left(\kappa d + \frac{\pi \alpha}{P(C)}\frac{d^2}{2}+(\kappa d)^2 \right) <0\label{eq:convexUpBound},
\end{align}
for $\alpha>0$ small enough (depending on $M$, $K$ and $P(C)$).

As for $d>1$, we may assume that $\alpha$ is small enough so that $\alpha \int_0^{+\infty}\eta \leq 1/2\leq d/2$. Moreover, $\int_0^d\eta\geq \int_0^1\eta=1/2$, so that the term inside brackets is less than or equal to
\begin{align*}
 -\kappa d- \frac{\pi \alpha}{P(C)} - (\kappa d)^2.
\end{align*}

\begin{align*}
  z_1^2+z_2^2-1&\leq  \alpha^2M -\frac{1}{1+\kappa d}  \left(\kappa d+ \frac{\pi \alpha}{P(C)} + (\kappa d)^2 \right)\\
               &\leq  \alpha^2M - \frac{1}{1+\kappa d}  \left(\kappa d+ \frac{\pi \alpha}{P(C)} \right)
\end{align*}
For $a\eqdef \frac{\pi \alpha}{P(C)}<1$, the mapping $x\mapsto -\frac{x+a}{x+1}$ is (strictly) decreasing on $[0,+\infty)$, hence upper bounded by $-a$, and we obtain that $z_1^2+z_2^2-1\leq \alpha^2M  -\frac{\pi \alpha}{P(C)}<0$ for $\alpha$ small enough.
\end{proof}

\begin{rem}\label{rem:decayz}
  A more straightforward construction would have been to construct $z$ parallel to the normals to $C$, or equivalently set $\alpha=0$ in~\eqref{eq:constrzoutu} and \eqref{eq:constrzoutd}. However, such a vector field would not decay in front of flat areas, where $\kappa(s)=0$, and we would have $\abs{z(s,d)}=1$ for all $d>0$. The above construction ``twists'' the field lines so as to obtain some decay of the norm. 
  
Still, the resulting upper bound~\eqref{eq:convexUpBound} for small $d$ depends on the local curvature of $\partial C$.
  If $\kappa(s)>0$, then, as $d\to 0^+$,
  \begin{equation*}
  \abs{z(s,d)}^2 \leq  1-\frac{\kappa(s)}{1+K}d + \smallo{d} 
  \end{equation*}
  On the other hand, if $\kappa(s)=0$, then
  \begin{equation*}
    \abs{z(s,d)}^2 \leq  1-\frac{\pi \alpha}{(1+K)P(C)}\frac{d^2}{2}.
  \end{equation*}
\end{rem}


\section{Numerical Illustrations}\label{sec:numerics}

In order to illustrate our theoretical findings, we have performed numerical computations on a discretized version of the denoising problem~\eqref{eq-rof}. Let us stress that this section does not provide any theoretical guarantees concerning the geometrical faithfulness of these approximations, and a careful study of the impact of discretization is an interesting avenue for future works. The code to reproduces these results can be found online\footnote{\url{https://github.com/gpeyre/2016-IP-tv-denoising/}}.

\subsection{Problem discretization}

The problem is discretized on an uniform grid $( (i/n,j/n) )_{i,j=0}^{n-1}$ of $n^2$ points in $[0,1]^2$. For simplicity, we use periodic boundary conditions. The input image $f$ is represented on this grid as $(f_{i,j})_{i,j=1}^n$ and is normalized so that $f_{i,j} \in [0,1]$. The recovered image $(u_{i,j})_{i,j}$ is defined on the same grid. Denoting $y \eqdef f+w$, the problem $\Pp_{\la}(y)$ is then approximated as
\eql{\label{eq-rof-discrete}\tag{$\Pp_\la^d(y)$}
	\umin{u \in \RR^{n^2}} \sum_{i,j=1}^n |u_{i,j}-y_{i,j}|^2 + \la \sum_{i,j}  \norm{\nabla_{i,j} f}
}
where $\norm{\cdot}$ is the Euclidean norm in $\RR^4$.
Here, $\nabla_{i,j} f \in \RR^4$ is a 4-fold discretization of the gradient operator, defined as
\eq{
\nabla_{i,j} f = n ( f_{i+1,j}-f_{i,j}, f_{i,j+1}-f_{i,j}, f_{i+1,j+1}-f_{i+1,j}, f_{i+1,j+1}-f_{i,j+1} ) \in \RR^4
}
so that the discrete gradient operator is $\nabla : u \in \RR^{n^2} \mapsto \nabla u \in \RR^{n^2 \times 4}$.
We also define the discrete divergence as
\eq{
	\diverg \eqdef -\nabla^\bot : \RR^{n^2 \times 4} \rightarrow \RR^{n^2}.
}
Note that this differs from the more usual forward  finite-difference approximation (used for instance in~\cite{chambolle2004algorithm}), and we found numerically that this improves the isotropy (rotation invariance) of the scheme. 

\subsection{Discrete dual problem and iterative algorithm}

We solve the finite dimensional convex optimization problem~\eqref{eq-rof-discrete} using the dual projected gradient descent of~\cite{chambolle2004algorithm}. 
It minimizes a discrete counterpart of~\eqref{eq-rof-dual}, which reads
\eql{\label{eq-dual-discr}\tag{$\Dd_\la^d(y)$}
	\umin{z \in \RR^{n^2 \times 4}} \enscond{
		\norm{ \frac{y}{\la} + \diverg(z) }_{\ell^2}
	}{
		z \in \Cc_\infty
	}	
}
\eq{
	\qwhereq
	\Cc_{\infty} \eqdef \enscond{z \in \RR^{n^2 \times 4}}{  
		\foralls (i,j), \norm{z_{i,j}} \leq 1
	}.
}
The solutions $z_\la$ of~\eqref{eq-dual-discr} are in general non-unique because the problem is not strictly convex, but the primal-dual relationship allow one to recovers the unique solution $u_\la$ of the primal problem~\eqref{eq-rof-discrete} as
\eq{
	u_\la \eqdef \diverg(z_\la) + \frac{y}{\la}. 
}

Starting by some initial $z^{(0)} \in \RR^{n^2 \times 4}$, the projected gradient descent reads
\eq{
	z^{(\ell+1)} \eqdef
	\text{Proj}_{\Cc_{\infty}} \pa{
		z^{(\ell)} + \tau \nabla ( \diverg(z) + \frac{y}{\la}), 	
	}
}
where the step size $\tau$ should satisfy $\tau < 2/\norm{\nabla}^2$ where $\norm{\nabla}^2 \le 16 n^2$ 
 is the operator norm of the gradient. 
The orthogonal projection on $\Cc_\infty$ is computed as
\eq{
	\tilde z = \text{Proj}_{\Cc_{\infty}}(z)
	\qwhereq
	\foralls (i,j), \quad
	\tilde z_{i,j} = \frac{z_{i,j}}{ \max( \norm{z_{i,j}}, 1 ) }.
}
The iterates converge $z^{(\ell)} \rightarrow z_\la$ toward a solution $z_{\la}$ of~\eqref{eq-dual-discr}, while the primal iterates
\eq{
	u^{(\ell)} \eqdef \diverg(z^{(\ell)}) + \frac{y}{\la} 
}
converge toward the unique solution $u_\la$ of~\eqref{eq-rof-discrete} with a speed $\norm{u_\la-u^{(\ell)}} = O(1/\ell)$ as shown in~\cite{fadili2010tv}.


\subsection{Denoising results}

Figure~\ref{fig-results} displays the solution $u_{\la}$ of~\eqref{eq-rof-discrete} for a set of increasing values of $\la$. We use here $n=512$, and the noise $w$ is a realization of a white noise where each pixel is Gaussian distributed with a variance $\si^2$ for $\si=0.2$.
As predicted by Theorem~\ref{thm:spt_stability}, this shows how the level sets of the solution progressively clusters around the extended support Ext$(Df)$ as $\la$ increases. In order to get some insight about the geometry of this extended support, we display the saturation points of $\norm{z_{0}} = (\norm{z_{0,i,j}})_{i,j}$. Note that since $z_0$ is non-unique and we used the one output by the discrete minimization scheme, we do not claim and theoretical guarantee about this procedure. In practice, we observed that stating the algorithm with $z^{(0)}=0$ leads to meaningful result about this extended support, that are shown on Figure~\ref{fig-results}.

\newcommand{\myfig}[1]{\includegraphics[width=0.16\textwidth]{figures/#1}}
\newcommand{\figRow}[4]{%
\myfig{#1/original.png}&%
\myfig{#1/noisy-img}&%
\myfig{#1/extended-support.png}&%
\myfig{#1/tv-noisy-#2-ls.png}&%
\myfig{#1/tv-noisy-#3-ls.png}&%
\myfig{#1/tv-noisy-#4-ls.png}\\}

\begin{figure}[H]
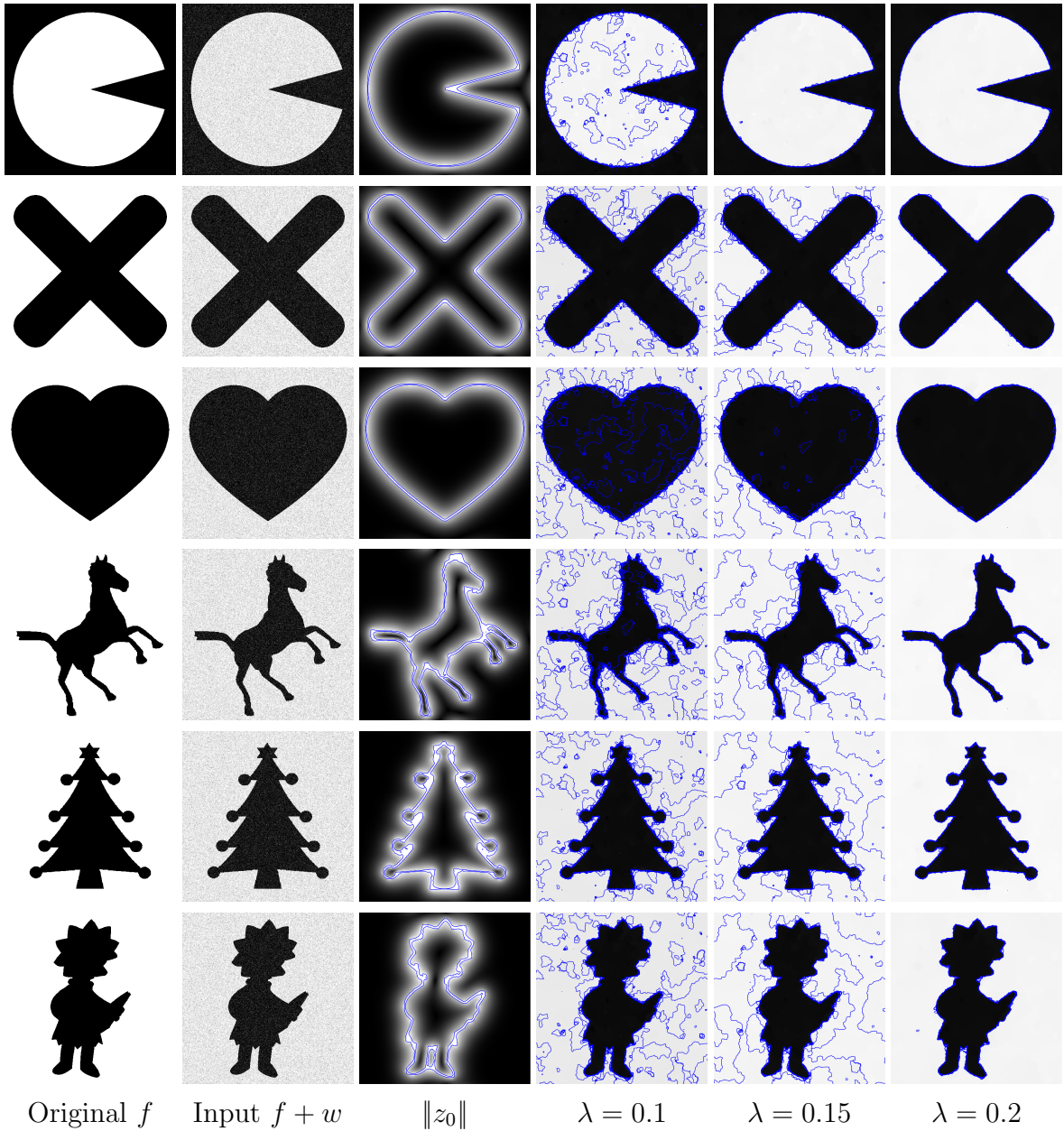

\begin{center}
\begin{tabular}{c@{\hspace{1mm}}c@{\hspace{1mm}}c@{\hspace{1mm}}c@{\hspace{1mm}}c@{\hspace{1mm}}c}
\figRow{pacman}{7}{8}{9}
\figRow{cross}{8}{9}{10}
\figRow{heart}{8}{9}{10}
\figRow{horse}{8}{9}{10}
\figRow{sapin}{8}{9}{10}
\figRow{lisa}{8}{9}{10}
Original $f$ & 
Input $f+w$ & 
$\norm{z_0}$  &
$\la=0.1$ & 
$\la=0.15$ & 
$\la=0.2$  
\end{tabular}
\caption{\label{fig-results} 
Display of the discretized solution $u_{\la}$ of the discretized problem~\eqref{eq-rof-discrete} for several value of $\la$. The blue curves on top $u_{\la}$ of indicate the level sets of $u_{\la}$ (computed using bilinear interpolation on the grid). The blue curves on top of $\norm{z_0}$ indicate the obtained approximation of the boundary of the extended support Ext$(Df)$.
}
\end{center}
\end{figure}

\section*{Conclusion}
\label{sec-conclusion}

In this paper, we have characterized the regions in which the solutions to the two-dimensional TV denoising problem are  geometrically stable under $L^2$ additive noise. In particular, via the minimal norm certificate, we introduced the notion of an extended support and although the support of TV regularized solutions are in general not stable, we have proved that the support instabilities are confined to a neighbourhood of the extended support. We have also provided explicit examples of the extended support in the case of indicators of convex sets. Within the low noise regime, for the indicator set of a calibrable set $C$, the support of the solutions was shown to cluster around $\partial C$. While for indicator functions of general convex sets (including convex sets for which the source condition is not satisfied), the support of the  solutions was shown to cluster around the domain $C\setminus \mathrm{int}(C_{R^*})$, where $C_{R^*}$ is the maximal calibrable set inside $C$.

\section*{Acknowledgements}

Antonin Chambolle was partially supported by the ANR, programs ANR-12-BS01-0014-01 ``GEOMETRYA'' and ANR-12-IS01-0003 ``EANOI'' (joint with FWF No.~I1148). He acknowledges the hospitality of DAMTP and Churchill College (U. Cambridge) for the year 2015-2016.
Vincent Duval and Clarice Poon acknowledge support from the CNRS (D\'efi Imag'in de la Mission pour l'Interdisciplinarit\'e, project CAVALIERI).
The work of Gabriel Peyr\'e has been supported by the European Research Council (ERC project SIGMA-Vision).
Clarice Poon acknowledges support from the Fondation Sciences Math\'{e}matiques de Paris.

\printbibliography


\end{document}